\date{\today}
\newtheorem{theorem}{Theorem}[section]
\newtheorem{proposition}[theorem]{Proposition}
\newtheorem{lemma}[theorem]{Lemma}
\newtheorem{remark}[theorem]{Remark}
\def\be#1 {\begin{equation} \label{#1}}
\newcommand{\ee}{\end{equation}}
\def\sqw{\hbox{\rlap{\leavevmode\raise.3ex\hbox{$\sqcap$}}$%
\sqcup$}}
\def\findem{\ifmmode\sqw\else{\ifhmode\unskip\fi\nobreak\hfil
\penalty50\hskip1em\null\nobreak\hfil\sqw
\parfillskip=0pt\finalhyphendemerits=0\endgraf}\fi}
\newcommand{\R}{{\mathbb {R}}}
\newcommand{\N}{{\mathbb N}}
\newcommand{\Z}{{\mathbb Z}}
  \newcommand{\blue}{\textcolor{blue}}
\author{Jotsaroop Kaur and Saurabh Shrivastava}
\address{
}
\email{}
\address[Jotsaroop Kaur]{
Department of Mathematics\\
Indian Institute Science Education and Research\\
 Mohali, India}
\email{jotsaroop@iisermohali.ac.in}
\address [Saurabh Shrivastava]{
Department of Mathematics\\
Indian Institute Science Education and Research Bhopal\\
Bhopal-462066, India}
\email{saurabhk@iiserb.ac.in}
\keywords{Bilinear Bochner-Riesz means, Square function, Bochner Riesz means}
\subjclass[2010]{Primary 42A85, 42B15, Secondary 42B25}
\begin{document}

\title[Bilinear Bochner-Riesz maximal function]{Maximal estimates for  Bilinear Bochner-Riesz means}

\begin{abstract} We establish improved and sharp $L^p$ estimates for the maximal bilinear Bochner-Riesz means in all dimensions $n\geq 1$. This work extends the results proved by Jeong and Lee~\cite{JL}. We also recover the known results for the bilinear Bochner-Riesz means. The method of proof involves a new decomposition of the bilinear Bochner-Riesz multiplier $(1-|\xi|^2-|\eta|^2)_+^{\alpha}$ and delicate analysis in proving $L^p$ estimates for frequency localized square functions. 
\end{abstract}
\maketitle
\tableofcontents
\section{Introduction}
The convergence of the partial sum operators corresponding to two fold product of $n-$ dimensional Fourier series
\begin{eqnarray*}\label{con1}
\sum_{|m|^2+|k|^2\leq R^2} \hat f(m) \hat{g}(k)e^{2\pi i(k+m)\cdot x}
\end{eqnarray*} 
to $f(x)g(x)$ as $R\rightarrow \infty$ is a classical problem in Fourier analysis. Here $f$ and $g$ are $1-$periodic functions with respect to each variable belonging to Lebesgue spaces $L^{p_1}([0,1]^n)$ and $L^{p_2}([0,1]^n)$ respectively, where $ 1\leq p_1, p_2\leq \infty$.

This motivates us to consider the following more general partial sum operators 
\begin{eqnarray}\label{con}
\sum_{|m|^2+|k|^2\leq R^2} \left(1-\frac{|m|^2+|k|^2}{R^2}\right)^{\alpha}\hat f(m) \hat{g}(k)e^{2\pi i(k+m)\cdot x},~~\text{where}~~\alpha\geq 0.
\end{eqnarray}
The partial sum operators are referred to as the bilinear Bochner-Riesz means of order $\alpha$. 

The transference principle for bilinear multipliers establishes that the study of $L^p$ boundedness of the bilinear Bochner-Riesz means is equivalent to studying the corresponding estimates for the bilinear Bochner-Riesz operator of index $\alpha\geq 0$ defined on $\R^n$ by
$$\mathcal{B}^{\alpha}_R(f,g)(x)=\int_{\R^{n}}\int_{\R^{n}}\left(1-\frac{|\xi|^2+|\eta|^2}{R^2}\right)^{\alpha}_{+}\hat{f}(\xi)\hat{g}(\eta)e^{2\pi ix.(\xi+\eta)}d\xi d\eta,$$
where $f,g\in \mathcal{S}(\R^n)$ and $\hat f$ denotes the Fourier transform of $f$ given by $\hat{f}(\xi)=\int_{\R^n}f(x)e^{-2\pi ix.\xi}dx$. When $R=1$, we simply use the notation $\mathcal{B}^{\alpha}$. 

In this paper we are interested in investigating the a.e. convergence of $\mathcal{B}^{\alpha}_R(f,g)$ as $R\rightarrow\infty$ for $(f,g)$ in $L^{p_1}(\R^n)\times L^{p_2}(\R^n)$ for $1\leq p_1,p_2\leq \infty$. In view of the general principle due to Stein we know that in order to study the a.e. convergence of $\mathcal{B}^{\alpha}_R(f,g)$ it is enough to prove suitable $L^p$ estimates for the corresponding maximal function
$$\mathcal B^{\alpha}_*(f,g)(x)=:\sup_{R>0}|\mathcal{B}^{\alpha}_R(f,g)(x)|.$$
More precisely, for $f, g\in \mathcal S(\R^n)$ we are concerned with the following estimates 
\begin{eqnarray}\label{mbre}
\|\mathcal B_*^{\alpha}(f,g)\|_{L^p(\R^n)}\lesssim \|f\|_{L^{p_1}(\R^n)} \|g\|_{L^{p_2}(\R^n)}.
\end{eqnarray}
Also, the corresponding estimate for the bilinear Bochner-Riesz mean
\begin{eqnarray}\label{bre}
\|\mathcal B^{\alpha}(f,g)\|_{L^p(\R^n)}\lesssim \|f\|_{L^{p_1}(\R^n)} \|g\|_{L^{p_2}(\R^n)}.
\end{eqnarray}
Here we shall always assume that the exponents $p_1, p_2, p$ satisfy $1\leq p_1, p_2\leq \infty$ and $\frac{1}{p_1}+\frac{1}{p_2}=\frac{1}{p}$. The notation $A\lesssim B$ in the estimates above means that there is an implicit constant $C>0$ such that $A\leq CB.$ The constant $C$ may depend on $\alpha, n,p_1$ and $p_2.$ We will not keep track of such constants appearing in various inequalities unless we need them in order for the proof to work. In some inequalities in the later part of paper we shall also use the notation $A\lesssim_{a,b} B$ to emphasize the dependence of the implicit constant on the parameters $a$ and $b$. 

If the estimate \eqref{mbre} (or \eqref{bre}) holds we say that the operator $\mathcal{B}^{\alpha}_*$ (or $\mathcal{B}^{\alpha}$) is bounded from $L^{p_1}(\R^n)\times L^{p_2}(\R^n)\rightarrow L^{p}(\R^n)$. Observe that the standard dilation arguments tell us that $\mathcal{B}^{\alpha}:L^{p_1}(\R^n)\times L^{p_2}(\R^n)\rightarrow L^{p}(\R^n)$ if, and only if $\mathcal{B}^{\alpha}_R:L^{p_1}(\R^n)\times L^{p_2}(\R^n)\rightarrow L^{p}(\R^n)$ where $R>0$ and  $\frac{1}{p_1}+\frac{1}{p_2}=\frac{1}{p}$. 

The question of $L^{p_1}(\R^n)\times L^{p_2}(\R^n)\rightarrow L^{p}(\R^n)$ boundedness for the operators $\mathcal{B}^{\alpha}$ and $\mathcal{B}^{\alpha}_*$ is referred to as the bilinear Bochner-Riesz probem in general. This is one of the outstanding open problems in the theory of bilinear Fourier multipliers. In this paper we make significant progress on the bilinear Bochner-Riesz problem for $\mathcal{B}^{\alpha}_*$. We establish new and sharp results for the maximal function $\mathcal{B}^{\alpha}_*$.

Note that the operator $\mathcal{B}^{\alpha}_R$ takes the following form in the spatial coordinates. 
$$\mathcal{B}^{\alpha}_R(f,g)(x) := \int_{\mathbb{R}^{n}}\int_{\mathbb{R}^{n}}K^{\alpha}_R(y,z)f(x-y)g(x-z) dydz,$$
where 
\begin{eqnarray*}K^{\alpha}_R(y,z)&=&\int_{\mathbb{R}^{n}}\int_{\mathbb{R}^{n}}\left(1-\frac{\vert \xi\vert^{2}+\vert \eta\vert^{2}}{R^2}\right)^{\alpha}_{+}e^{2\pi \iota (y\cdot\xi+z\cdot\eta)}d\xi d\eta\\
&=&c_{n+\alpha} R^{2n}\frac{ J_{\alpha+n} (2\pi |(Ry,Rz)|)} {|(Ry,Rz)|^{\alpha+n}},~y,z\in \R^n.
\end{eqnarray*}
Here $J_{\alpha+n}$ denotes the standard Bessel function of order $\alpha+n$. 

Using the asymptotic estimates of Bessel functions it is easily verified that for $\alpha>n-\frac{1}{2}$, the kernel $K^{\alpha}_R$ is an integrable function with a uniform constant with respect to $R$. Consequently, the estimates  (\ref{mbre}) and (\ref{bre}) follow for all $1<p_1, p_2\leq \infty$ and $\alpha>n-\frac{1}{2}$, see \cite{BGSY, JSK} for detail. The index $\alpha=n-\frac{1}{2}$ is referred to as the critical index for the bilinear Bochner-Riesz problem.  Therefore, the bilinear Bochner-Riesz problem concerning estimates~(\ref{mbre}) and (\ref{bre}) needs to be investigated when $0\leq \alpha\leq n-\frac{1}{2}$. 

When $\alpha=0$, the operator $\mathcal B^{\alpha}$ is denoted by $\mathcal B$ and it is referred to as the bilinear ball multiplier operator. In~\cite{GL} Grafakos and Li proved the estimate (\ref{bre}) in dimension $n=1$ when $\alpha=0$ and $2\leq p_1,p_2,p'<\infty$ satisfying the H\"{o}lder relation $\frac{1}{p_1}+\frac{1}{p_2}=\frac{1}{p}.$ Here $p'$ denotes the conjugate index to $p$ given by $\frac{1}{p}+\frac{1}{p'}=1$. As mentioned previously in this paper we shall always assume that the exponents $p_1,p_2$ and $p$ satisfy the H\"{o}lder relation unless specified otherwise. We will denote such a triplet by $(p_1,p_2,p)$. In \cite{GL} authors also proved that $\mathcal B$ fails to satisfy (\ref{bre}) for triplets $(2,2,1), (2,\infty,2)$ and $(\infty,2,2)$ when $\alpha=0$. We do not know of any positive result for the bilinear ball multiplier in dimension $n=1$ for exponents lying outside the range mentioned as above. Later,  Diestel and Grafakos~\cite{DF} proved a negative result for boundedness of the operator $\mathcal B$ when $n\geq 2$. They exploited Fefferman's argument for the ball multiplier problem~\cite{Fef} to show that in dimension $n\geq 2$ the estimate (\ref{bre}) cannot hold true for $\alpha=0$ if exactly one of $p_1, p_2,p'$ is less than $2.$ 

The study of $L^p$ boundedness properties of the operator $\mathcal B^{\alpha}$ for $\alpha>0$ was initiated in~\cite{BGSY} and they proved several positive and negative results for the boundedness of the bilinear Bochner-Riesz means. In particular, they obtained almost complete result in dimension $n=1$ proving that (\ref{bre}) holds for $\alpha>0$ for all $1\leq p_1, p_2, p\leq \infty$, see Theorem 4.1 in~\cite{BGSY} for detail. This range of exponents is referred to as the Banach triangle in the theory of bilinear multipliers. Observe that the exponent $p$ has its natural range as $1/2\leq p\leq \infty$ when we allow $p_1$ and $p_2$ in the range $1\leq p_1, p_2\leq \infty.$ Later, in~\cite{LW} authors extended these results, specifically in the non-Banach traingle (i.e. when $p<1$) to an improved range of $p_1,p_2, p$ and $\alpha$. 

Recently, Jeong, Lee, and Vargas~\cite{JLV} further improved the range of exponents when $p_1, p_2\geq 2$ for the estimate (\ref{bre}). They introduced a new approach to address the bilinear Bochner-Riesz problem in dimension $n\geq 2$. Their approach relies on a new decompostion of the bilinear Bochner-Riesz operator into a product of square functions in the pointwise sense. This allows them to obtain new results for the estimate (\ref{bre}) when $n\geq 2$ and $p_1,p_2\geq 2$, see [Section 3, \cite{JLV}] for detail. In particular, in~\cite{JLV} authors obtained new results by improving the lower bounds on the index $\alpha$. They  proved optimal result for the estimate (\ref{bre}) when $p_1=p_2=2$ and $\alpha>0$ for all $n\geq 2$, see Corollary 1.3 in ~\cite{JLV} (also see Proposition~4.10 in~\cite{BGSY}). However, the results in other cases are not optimal. We would like to remark here that their method requires the condition that $p_1,p_2\geq 2$. We will see that with our approach we can recover these results. Moreover, our method allows us to obtain $L^p$ estimates for $\mathcal B^{\alpha}$ when the exponents are less than $2$. In particular, the estimate~\eqref{bre} for exponents in non-Banach triangle are new in dimension $n=1$, see Theorem~\ref{dim1}. However, for $n\geq 2$ we do not get improved results for the estimate~\eqref{bre}  when $p_1, p_2<2$ as compared to already known results in~\cite{BGSY, LW}. 

Now, coming back to the main theme of the paper let us discuss the known results for the bilinear Bochner-Riesz maximal function concerning the estimate~(\ref{mbre}). The study of $L^p$ boundedness properties of $\mathcal B^{\alpha}_*$ has been initiated in~\cite{GHH, He}. In~\cite{GHH} authors proved boundedness of maximal bilinear multiplier operator with a certain decay condition on the multiplier symbol. As an application of their result they proved estimate (\ref{mbre}) for $p_1=p_2=2$ and $\alpha>\frac{2n+3}{4}$. Note that this result is interesting only if $\frac{2n+3}{4}$ is smaller or equal to the critical index $\alpha=n-\frac{1}{2}$. Very recently, Jeong and Lee~\cite{JL} have obtained improved results in this direction generalizing the previously known results significantly. In order to prove $L^p$ estimates in~\cite{JL} authors have exploited the decomposition of the bilinear Bochner-Riesz operators into square functions as carried out in~\cite{JLV}. In particular, they proved that the estimate (\ref{mbre}) holds for $\alpha>1$ for $p_1=p_2=2$. Note that the condition $\alpha>1$ is far from being sharp. 

In this paper our main goal is to improve the range of $p_1,p_2, p$ and $\alpha$ in the maximal estimate~\eqref{mbre}. We establish a new decomposition of the bilinear Bochner-Riesz operators in terms of product of square functions which are closely related with the classical Bochner-Riesz square functions (see section~\ref{sec:dec}  for precise detail).
As a result, for each piece of the decomposition parametrised by $j$, the corresponding bilinear operator can be written as superposition of product of localised linear operators $B_{j,\beta}^{R,t}f$ and $B_t^{\delta}g$. For more details please refer to \eqref{decomope}. 
The operator $B_{j,\beta}^{R,t}$ turns out to be a slight perturbation of the localised Bochner Riesz operator. A further careful analysis allows us to reduce the $L^p$ boundedness of the corresponding square function to the known case of the localised square function of Bochner Riesz operator as defined in \cite{Leesqr}. This allows us to improve the result of E. Jeong and S. Lee \cite{JL} for the boundedness of maximal Bilinear Bochner Riesz function.  See \eqref{better} and equation $4.2$ in Proposition  $4.1$ in \cite {JL} for comparision.

 This approach allows us to establish new and sharp results for the estimates (\ref{mbre}) and (\ref{bre}) with an improved range of exponents $p_1, p_2, p$ and lower bound on $\alpha$. Moreover, it is valid uniformly in all dimensions $n\geq 1$. In particular, we obtain optimal results for $p_1=p_2=2$ when $n\geq 1$, see  Theorems~\ref{maintheorem} and \ref{dim1}. We remark that  the maximal estimate ~\eqref{mbre} in dimension $n=1$ is not known. Our approach allows us to address this case and we obtain new and sharp results for the maximal estimate~\eqref{mbre} in dimension $n=1$, see Theorem~\ref{dim1}. 
\subsection*{Organization of the paper} In Section~\ref{sec:main}  we state the main results and discuss the methodology of our proofs. In this section we also  make a comparision of our results with the known results. In Section~\ref{sec:dec} we present the proof of decomposition of bilinear Bochner-Riesz operator. In Section~\ref{results:used} we develop a discussion about Bochner-Riesz square function and some of the results stated in this section will be used in proving the main results of this paper.  Section~\ref{proof:max} is devoted to proving Theorem~\ref{maintheorem} concerning the boundedness of the bilinear Bochner-Riesz maximal function in dimension $n\geq 2$. Finally, in Section~\ref{proof:br} we present the proof of Theorem~\ref{dim1} which addresses the $L^p$ boundedness of the bilinear Bochner-Riesz maximal function in dimension $n=1$. 
\section{Main results and methodology}\label{sec:main}
We first set some notation that are required in order to describe our results. 
For $1\leq p\leq \infty$ and $n\geq 1$ denote $\alpha(p)= \max\left\{n|\frac{1}{p}-\frac{1}{2}|-\frac{1}{2},0\right\}$. Note that for $n=1$ we have $\alpha(p)=0$ for all $1\leq p\leq \infty$. For $n\geq2,$ define $p_0(n)=2+\frac{12}{4n-6-k}$ where $n\equiv k~\text{mod}~3, k=0,1,2$.
Denote $\mathfrak p_n=\text{min}~\left\{p_0(n),\frac{2(n+2)}{n}\right\}.$ For $n\geq 1$ and $1\leq p_1,p_2\leq \infty$ we define  
\begin{equation*}
	 \alpha_*(p_1,p_2)=\begin{cases}\alpha(p_1)+\alpha(p_2)&\textup{when}~~ \mathfrak p_n\leq p_1,p_2\leq\infty;\\\\
	 \alpha(p_1)+\left(\frac{1-2p_2^{-1}}{1-2(\mathfrak p_n)^{-1}}\right)\alpha(\mathfrak p_n)&	\textup{when}~~ \mathfrak p_n\leq p_1\leq\infty~ \text{and}~2\leq p_2<\mathfrak p_n;\\\\
	\left(\frac{1-2p_1^{-1}}{1-2(\mathfrak p_n)^{-1}}\right) \alpha(\mathfrak p_n)+\alpha(p_2)&	\textup{when}~~ 2\leq p_1<\mathfrak p_n~ \text{and}~\mathfrak p_n\leq p_2\leq\infty;\\\\
	\left(\frac{2-2p_1^{-1}-2p_2^{-1}}{1-2(\mathfrak p_n)^{-1}}\right)\alpha(\mathfrak p_n)&\textup{when}~~2\leq p_1,p_2 <\mathfrak p_n.
	\end{cases}
\end{equation*}
The following theorems concerning the  $L^p$ boundedness of the bilinear Bochner-Riesz maximal function $\mathcal B_*^{\alpha}$ are the main results of this paper. We state the results for $n=1$ and $n\geq 2$ separately. 

\begin{theorem}(The case of $n\geq 2$) \label{maintheorem} Let $n\geq 2$ 
and $(p_1,p_2,p)$ be a triplet of exponents satisfying $p_1,p_2\geq 2$ and $\frac{1}{p}=\frac{1}{p_1}+\frac{1}{p_2}.$ Then for $\alpha>\alpha_*(p_1,p_2)$ the estimate ~\eqref{mbre} holds with the implicit constant depending only on $\alpha, p_1,p_2$ and $n$.
\end{theorem}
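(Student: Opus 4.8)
The plan is to follow the strategy sketched in the introduction: start from the new decomposition of the bilinear Bochner-Riesz multiplier $(1-|\xi|^2-|\eta|^2)_+^\alpha$ described in Section~\ref{sec:dec}, writing $\mathcal{B}^{\alpha}_R(f,g)$ as a sum over dyadic pieces indexed by $j$, where the $j$-th piece is a superposition (integrated against a parameter $t$) of products $B_{j,\beta}^{R,t}f \cdot B_t^{\delta}g$ as in \eqref{decomope}. Taking the supremum over $R>0$ inside, the maximal operator $\mathcal{B}^{\alpha}_*$ is controlled by $\sum_j$ of the superposition of $(\sup_R|B_{j,\beta}^{R,t}f|)\cdot|B_t^{\delta}g|$, and by Cauchy--Schwarz in $t$ this is dominated by a product of two square functions: one built from the perturbed localized Bochner-Riesz maximal pieces $B_{j,\beta}^{R,t}f$, the other being (a localized version of) the classical Bochner-Riesz square function applied to $g$. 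The key reduction, already flagged in the excerpt, is that $B_{j,\beta}^{R,t}$ is only a mild perturbation of the localized Bochner-Riesz operator, so that the square function in $f$ can be bounded using the known $L^p$ estimates for the localized Bochner-Riesz square function from \cite{Leesqr}, which is precisely where the exponent $\mathfrak{p}_n$ and the function $\alpha(p)$ enter.

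Concretely, I would proceed as follows. First, fix the decomposition and record the pointwise bound $\mathcal{B}^\alpha_*(f,g)(x) \lesssim \sum_j 2^{-j\epsilon(\alpha)} S_j f(x)\, \mathcal{G} g(x)$ (schematically), where $S_j f$ is the $j$-th maximal square function in $f$, $\mathcal{G}g$ is the Bochner-Riesz-type square function in $g$, and $\epsilon(\alpha)>0$ whenever $\alpha>\alpha_*(p_1,p_2)$ — the summability in $j$ is what forces the strict inequality on $\alpha$. Second, apply Hölder's inequality in $x$ with exponents $p_1/p$ and $p_2/p$ (equivalently $p_1,p_2$ relative to $p$) to split $\|S_j f \cdot \mathcal{G}g\|_{L^p} \le \|S_j f\|_{L^{p_1}}\|\mathcal{G}g\|_{L^{p_2}}$. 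Third, bound $\|\mathcal{G}g\|_{L^{p_2}} \lesssim \|g\|_{L^{p_2}}$ using the known square function estimates (valid for $p_2\ge 2$ with the appropriate smoothing index, which is the source of the $\alpha(p_2)$ and the interpolation factor against $\alpha(\mathfrak p_n)$ when $2\le p_2<\mathfrak p_n$). Fourth, and most importantly, bound $\|S_j f\|_{L^{p_1}} \lesssim 2^{j\,\theta(p_1)}\|f\|_{L^{p_1}}$ by transferring the perturbed operators $B_{j,\beta}^{R,t}$ to genuine localized Bochner-Riesz operators — controlling the error terms coming from the perturbation — and then invoking the results of Section~\ref{results:used}; here the two regimes $p_1\ge \mathfrak p_n$ versus $2\le p_1<\mathfrak p_n$ again produce the two forms of the $p_1$-contribution in $\alpha_*(p_1,p_2)$. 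Finally, combine: choosing $\alpha>\alpha_*(p_1,p_2)$ makes $\sum_j 2^{-j\epsilon(\alpha)}2^{j\theta(p_1)}<\infty$, which yields \eqref{mbre}.

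The main obstacle, as the authors themselves emphasize, is Step four: carefully handling the maximal operator $\sup_{R>0}|B_{j,\beta}^{R,t}f|$ and showing that the perturbation $B_{j,\beta}^{R,t}$ differs from a bona fide localized Bochner-Riesz operator only by terms that are harmless on $L^{p_1}$, uniformly in $t$ and with good (at worst polynomial) dependence on $j$. This requires tracking the precise form of the multiplier in each piece of the decomposition, understanding how the localization parameter $t$ interacts with the scaling $R$, and then feeding the resulting operator into the localized Bochner-Riesz \emph{square function} machinery of \cite{Leesqr, Leesqr}-type results rather than just the pointwise Bochner-Riesz bounds — it is exactly this square function input that gives the sharp lower bound $\alpha(\mathfrak p_n)$ and hence the improvement over Proposition~4.1 of \cite{JL}. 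A secondary technical point is justifying the Cauchy--Schwarz step in $t$ cleanly, i.e. setting up the two square functions so that the one in $g$ is literally (a dyadic piece of) a classical Bochner-Riesz square function whose $L^{p_2}$ bound is already known for all $p_2\ge 2$, so that no new estimate for $g$ is needed and all the work is concentrated on the $f$ side.
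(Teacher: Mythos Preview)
Your plan matches the paper's approach, but one step as stated would fail. You take $\sup_{R>0}$ on $B_{j,\beta}^{R,t}f$ \emph{before} Cauchy--Schwarz in $t$; once $R$ is gone the $t$-integration has no upper cutoff, and the $g$-side of Cauchy--Schwarz becomes $\bigl(\int_0^\infty |B_t^\delta g(x)|^2\,dt\bigr)^{1/2}$, which diverges since $B_t^\delta g\to g$ as $t\to\infty$. The paper instead applies Cauchy--Schwarz \emph{first}, for each fixed $R$ on the finite interval $[0,R_j]$; after the change of variable $t\mapsto Rt$ the $f$-factor acquires $R$-independent limits $[0,\sqrt{2^{-j+1}}]$ (this is how the operator $S_{j,\beta}^{R,t}f$ arises), while the $g$-factor is written as $R_j^{1/2}\bigl(R_j^{-1}\int_0^{R_j}|B_t^\delta g|^2\,dt\bigr)^{1/2}$. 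Only then is $\sup_R$ taken; it factors onto each piece, and the $g$-piece becomes precisely the maximal averaged operator of Lemma~\ref{steinlemma}. This ordering is what makes the $g$-square function finite and is not cosmetic.

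Two smaller omissions worth noting: (i)~the decomposition in Section~\ref{sec:dec} also leaves a piece $\mathcal{B}^\alpha_{0,R}$ (multiplier $\psi_0(|\xi|^2/R^2)(1-(|\xi|^2+|\eta|^2)/R^2)_+^\alpha$), which the paper handles in a separate subsection by decomposing again in $\eta$ and repeating the argument with the roles of $f$ and $g$ interchanged; (ii)~the bound on the $f$-side maximal square function (your Step~4, the paper's Theorem~\ref{maxsquare}) is not a one-line perturbation: the paper splits the $t$-range into $t^2<2^{-j-1-\epsilon_0}$, where the symbol is nonsingular and a Taylor expansion suffices, and $t^2\in[2^{-j-1-\epsilon_0},2^{-j+1}]$, where a further dyadic decomposition in $k$ and microlocal reductions (Propositions~\ref{maxsquare2} and~\ref{last}) are needed before the localized square-function estimate of Lemma~\ref{localisedversion} can be invoked.
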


\begin{theorem}\label{dim1}(The case of $n=1$.) Let $\alpha>0$ and $n=1$. Let $1<p_1,p_2<\infty$ and $\frac{1}{p_1}+\frac{1}{p_2}=\frac{1}{p}$. Then the estimate \eqref{mbre} holds for each of the following cases. 
	\begin{enumerate}
		\item $p_1,p_2\geq 2$ and $\alpha>0$.
		\item $1<p_1<2, p_2\geq 2$ and $\alpha>\frac{1}{p_1}-\frac{1}{2}$.
		\item $1<p_2<2, p_1\geq 2$ and $\alpha>\frac{1}{p_2}-\frac{1}{2}$.
		\item \label{case4} $1<p_1,p_2<2$ and $\alpha>\frac{1}{p}-1$. 
	\end{enumerate}
\end{theorem}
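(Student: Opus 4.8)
The plan is to derive Theorem~\ref{dim1} from the one-dimensional instance of the decomposition machinery described in Section~\ref{sec:dec} together with the sharp $L^p$ mapping properties of the localised Bochner--Riesz square function. First I would invoke the decomposition \eqref{decomope}, specialised to $n=1$: for each dyadic parameter $j$ one writes the $j$-th piece of $\mathcal B^\alpha_R$ as a superposition over $t$ of products $B^{R,t}_{j,\beta}f \cdot B^\delta_t g$, where $B^{R,t}_{j,\beta}$ is a perturbation of a localised Bochner--Riesz operator. Since $n=1$ makes $\alpha(p)=0$ for every $p\in[1,\infty]$, the linear Bochner--Riesz operator of any positive order is bounded on every $L^p$ with $1<p<\infty$, and the relevant localised square functions of Bochner--Riesz type from \cite{Leesqr} satisfy the full range of $L^p$ bounds for $1<p<\infty$ with only a logarithmic (or $\varepsilon$-power) loss in the localisation scale. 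The main content is then to control the sum over $j$: each term carries a gain $2^{-j\varepsilon_0}$ for some $\varepsilon_0=\varepsilon_0(\alpha)>0$ precisely because $\alpha>0$ (in the Banach range $p_1,p_2\ge 2$) or $\alpha$ exceeds the stated threshold (in the non-Banach cases), so the $j$-summation converges and yields \eqref{mbre}.

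For case (1), $p_1,p_2\ge 2$, after the decomposition I would estimate $\|B^{R,t}_{j,\beta}f\|_{L^{p_1}}$ and the square-function norm of $\{B^\delta_t g\}_t$ in $L^{p_2}$ by Hölder's inequality in the $t$-integral and the $n=1$ Bochner--Riesz square-function bounds; the product then lands in $L^p$ by Hölder in $x$. The sum over $j$ converges because the decomposition contributes a factor comparable to $2^{j(\alpha(p_1)+\alpha(p_2)-\alpha)}=2^{-j\alpha}$ (using $\alpha(p_i)=0$), i.e. $\alpha_*(p_1,p_2)=0$ in dimension one, matching the claimed threshold $\alpha>0$. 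For cases (2) and (3), one of the two factors now involves an exponent below $2$; here I would not interpolate with $\mathfrak p_n$ as in the higher-dimensional Theorem~\ref{maintheorem}, but instead use the one-dimensional fact that the Bochner--Riesz square function of order $\alpha$ is bounded on $L^{p}$ for $1<p<2$ provided $\alpha>\frac1p-\frac12$ (the sharp threshold in one dimension), applied to the factor sitting in $L^{p_1}$ or $L^{p_2}$; the other factor is handled as in case (1). Case (4) combines the two: both factors require the respective thresholds $\alpha>\frac{1}{p_1}-\frac12$ and $\alpha>\frac{1}{p_2}-\frac12$, and one needs the single parameter $\alpha$ to dominate the sum of the two losses along the $j$-sum — which is why the natural condition reads $\alpha>\bigl(\frac{1}{p_1}-\frac12\bigr)+\bigl(\frac{1}{p_2}-\frac12\bigr)=\frac1p-1$.

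The step I expect to be the main obstacle is making the localised linear operator $B^{R,t}_{j,\beta}$ genuinely reducible to the standard localised Bochner--Riesz square function of \cite{Leesqr} uniformly in the auxiliary parameters $\beta$ and $t$ and in $R$: the perturbation (coming from the precise form of the new decomposition of $(1-|\xi|^2-|\eta|^2)_+^\alpha$) must be shown to be harmless, e.g. by a rapidly decaying kernel estimate or by writing the perturbed multiplier as a convergent sum of rescaled copies of the model multiplier with summable coefficients. A secondary technical point is the passage from fixed $R$ to the supremum over $R>0$ defining $\mathcal B^\alpha_*$: in one dimension I would handle it by the standard device of writing the maximal operator over $R$ via a square-function / Sobolev-embedding argument in the $R$ variable (controlling $\sup_R|F(R)|$ by $\|F\|_2^{1/2}\|\partial_R F\|_2^{1/2}$-type inequalities after a dyadic decomposition in $R$), so that the extra $R$-derivative only costs an integrable power of $2^j$ that can be absorbed into the $\alpha>0$ (resp.\ $\alpha>\frac1p-1$) margin. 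Finally, the endpoint sharpness — that the thresholds $\frac1{p_i}-\frac12$ and $\frac1p-1$ cannot be improved — would follow from the known counterexamples for the linear Bochner--Riesz problem in one dimension transplanted to the bilinear setting by testing on product functions, which I would state as a remark rather than prove in detail.
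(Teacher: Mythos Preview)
Your overall route matches the paper's: invoke the decomposition \eqref{decomope}, apply Cauchy--Schwarz in $t$, and control the two resulting square functions separately via the one-dimensional Bochner--Riesz square-function theory. The assembly of the thresholds by splitting $\alpha=\beta+\delta$ also lands on the right final conditions (though the individual constraints when $p_i<2$ are actually $\beta>1/p_1$ and $\delta>1/p_2-1$, each shifted by $1/2$ from what you wrote; the sums agree). The correct one-dimensional square-function reference is \cite{KanSun}; \cite{Leesqr} treats $n\ge2$ only.

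The substantive gap is in your handling of the supremum over $R$, which you flag as secondary but is in fact the crux in dimension one. The Sobolev device you propose needs as input a localised square-function bound of the type in Lemma~\ref{localisedversion} for the full range $1<p_1<\infty$; that estimate is only established for $p=2$ or $p\ge\mathfrak p_n$, so cases~(2) and~(4) would not go through on your scheme. Even for $p_1\ge2$ the bookkeeping is delicate: the $R$-derivative applied to $\psi(2^j(1-|\xi|^2/R^2))$ contributes a factor $2^j$, and one must check that this is \emph{exactly} cancelled by the $2^{-j/2}$ gain from the square-function input rather than merely ``absorbable'' into the $\alpha>0$ margin. The paper bypasses all of this with a direct kernel computation specific to $n=1$ (Lemma~\ref{dim1kernel}): the support of $\psi(2^j(1-\xi^2))$ is two intervals of length $\sim2^{-j}$, and an integration by parts yields $|K_j(x)|\lesssim 2^{-j}(1+2^{-j}|x|)^{-2}$, hence $\|K_j\|_{L^1(\R)}\le C$ \emph{uniformly} in $j$ and $\sup_{R>0}|B^{\psi}_{2^{-j},R}f|\lesssim\mathcal M f$ pointwise. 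This uniform maximal bound, valid for every $1<p_1\le\infty$, is what makes the $j$-sum converge under the bare hypothesis $\alpha>0$ in case~(1), and it is what lets the $f$-side of the argument run for $p_1<2$ in cases~(2) and~(4) without invoking any $p\ge2$-restricted localised square-function inequality.
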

\begin{remark}
	Note that in case \eqref{case4} of Theorem~\ref{dim1} we have $p<1$. When $2/3<p<1$ we get that $\frac{1}{p}-1<1/2$ which is interesting as $\alpha=1/2$ is the critical index for the bilinear Bochner-Riesz problem when $n=1$. 
\end{remark}

\begin{center}
	\begin{figure}[]
		\begin{tikzpicture}
		\draw [->, thick, color=gray] (0, 0)--(0, 7);
		\draw [->, thick, color=gray] (0, 0)--(5, 0);
		\foreach \x in {0,...,4} {
			\coordinate (A\x) at (\x,0);
		}
		\draw (A0) -- (A1) -- (A2) -- (A3)--(A4);
		\node[inner sep=2pt,label=below:$1/4$] (A00) at ($(A0)!2!(A1)$) {};
		\node[inner sep=2pt,label=below:$1/2$] (A10) at ($(A1)!3!(A2)$) {};
		\node[inner sep=2pt,label=below:$1/p$] (A20) at ($(A2)!3!(A3)$) {};
		\foreach \y in {0,...,5} {
			\coordinate (A\y) at (0,\y);
		}
		\draw (A0) -- (A1) -- (A2) -- (A3)--(A4)--(A5);
		\node[inner sep=2pt,label=left:$1/2$] (A00) at ($(A0)!2!(A1)$) {};
		\node[inner sep=2pt,label=left:$1$] (A10) at ($(A1)!3!(A2)$) {};
		\node[inner sep=2pt,label=left:$3/2$] (A20) at ($(A2)!4!(A3)$) {};
		\node[inner sep=2pt,label=left:$\alpha$] (A20) at ($(A4)!3!(A5)$) {};
		\draw [name path=A, thick, dashed, color=gray, label=$\alpha=1-4/p$] (2, 0)--(0, 4);
		\draw [name path=B, thick,dashed, color=gray] (4,0) -- (4,6);
		\draw [name path=C, thick,dashed, color=gray] (0,6) -- (4,6);
		\draw [name path=D, thin,dashed, color=gray, opacity=0.001] (4,0) -- (0,6);
		\node at (2,2) (nodeXj) {$\alpha=1-\frac{4}{p}$};
		\node at (2,2) (nodeXj) {$\alpha=1-\frac{4}{p}$};
		\tikzfillbetween[of=B and C]{gray, opacity=0.1};
		\tikzfillbetween[of=A and D]{gray, opacity=0.1};
		\draw [name path=E, thick,dashed, color=gray] (2,4) -- (4,4);
		\draw [name path=F, thick,dashed, color=gray] (2,4) -- (1,6);
		\draw [name path=G, thick,dashed, color=gray] (1,6) -- (4,6);
		\tikzfillbetween[of=E and G]{gray, opacity=0.2};
		\end{tikzpicture}
		\caption{Range of $\alpha$ and $p$ for $L^p(\R^2)\times L^p(\R^2)\rightarrow L^{p/2}(\R^2)$ boundedness of $\mathcal B_*^{\alpha}$. The region with dark gray color represent the range obtained in~\cite{JL}.}\label{comp}.
	\end{figure}
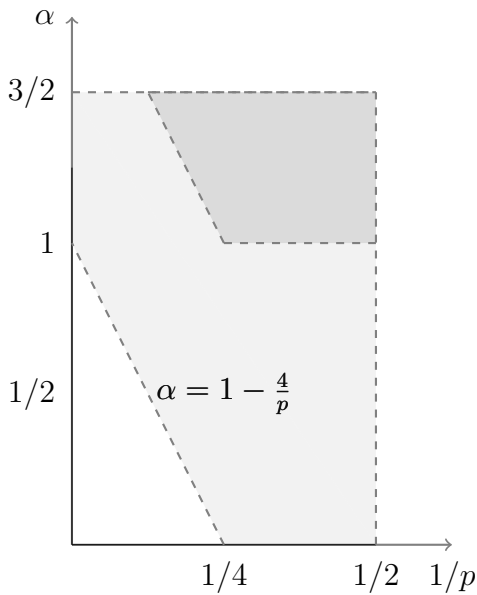
\end{center}
\subsection{Methodology of proof and comparision} 
 We decompose the bilinear multiplier $(1-|\xi|^2-|\eta|^2)_+^{\alpha}$ along the $\xi$ and $\eta$ axes. Note that due to symmetry of the multiplier symbol in~$\xi$ and $\eta$, we need to perform only one of the two decompositions as the other case can be dealt with in a similar fashion. This idea is motivated from the observation that the standard decomposition of the bilinear multipliers $(1-|\xi|^2-|\eta|^2)_+^{\alpha}$ into smooth symbols supported in thin annular regions near the singularity $|\xi|^2+|\eta|^2=1$ requires further decomposition of the smooth symbols into frequency localized multiplier operators using the idea of Littlewood-Paley decompositions. The standard Littlewood-Paley decomposition of these smooth bilinear multipliers poses difficulty when we try to prove estimates for frequency localised operators with supports along the coordinate axes. Due to the curvature of the sphere these Littlewood-Paley Fourier projections overlap along the coordinate axes and it becomes difficult to have a good control on the operators arising in this process. Therefore, in order to overcome this difficulty we decompose the bilinear multiplier $(1-|\xi|^2-|\eta|^2)_+^{\alpha}$ into smooth functions supported in annular regions with respect to variables $\xi$ and $\eta$ separately. This approach along with an identity \eqref{thankstostein} from~\cite{SW} [page 278] yields a very useful decomposition of the bilinear Bochner-Riesz multiplier $(1-|\xi|^2-|\eta|^2)_+^{\alpha}$ into a product of frequency localized Bochner-Riesz square functions, see estimate~\eqref{decomope} for precise detail. 
This approach has a big advantage in obtaining $L^p$ estimates for the bilinear Bochner-Riesz means and its maximal function. When $p_1,p_2\geq 2$ this approach allows us to improve the known range of $\alpha$ significantly for which the estimate~\eqref{mbre} holds. For example, Jeong and Lee [\cite{JL}, Theorem 1.2] has the lower bound on $\alpha$ as $\alpha>\alpha(p_1)+\alpha(p_2)+1$ when $p_1,p_2\geq p_n(s)$ or $p_1=p_2=2$ whereas with our approach we reduce the lower bound to $\alpha>\alpha(p_1)+\alpha(p_2)$. See Figure~ \ref{comp} for a comparision. The region shaded with dark gray color in the figure above represents the range of $p$ versus the index $\alpha$ for the $L^p(\R^2)\times L^p(\R^2)\rightarrow L^{p/2}(\R^2)$ of the maximal function $\mathcal B_*^{\alpha}$ from~\cite{JL}. We extend the estimate to the region shaded with light gray color. In the process, we obtain optimal results for boundedness of bilinear Bochner-Riesz maximal function when $p_1=p_2=2$.

When $n=1$ we get $L^p$ estimates for the bilinear Bochner-Riesz maximal function for a wide range of exponents including the cases when $p_1$ or $p_2$ is less than $2$. Indeed our results for exponents outside the Banach triangle (i.e. $1\leq p_1,p_2,p\leq \infty$) are new even in the case of bilinear Bochner-Riesz operator. Finally, we would like to emphasise that our method is applicable uniformly to all dimensions $n\geq 1$ and provides us with simplified proofs of the existing results. We would like to remark here that this method works best when $p_1,p_2\geq 2$. When either of $ p_1$ and $p_2$ is less than $2$, we need a different approach in order to improve the range of $\alpha$ for the boundedness of Bilinear Bochner Riesz operator and its associated maximal function.

\begin{figure}[H]
\includegraphics[scale=0.75]{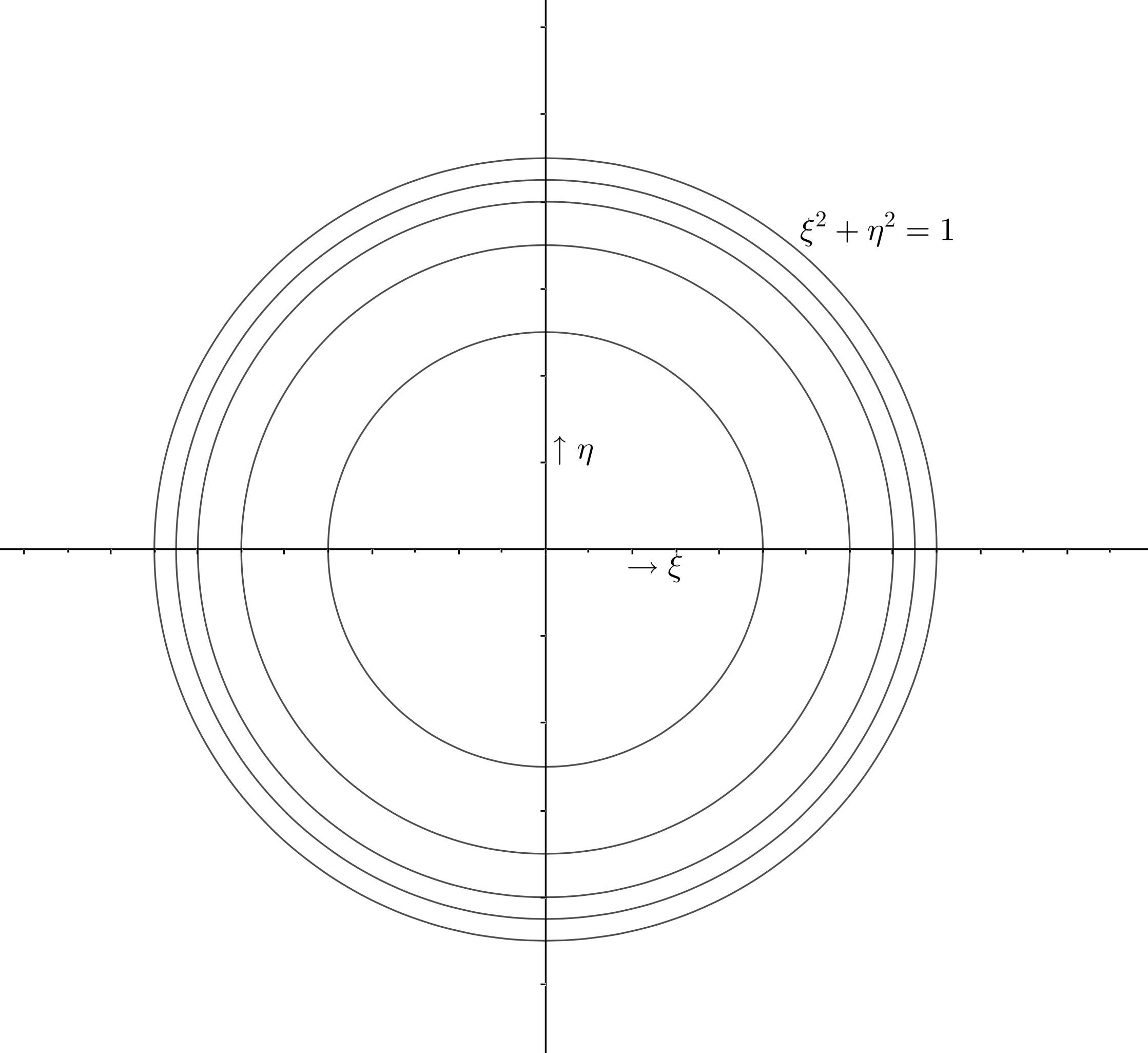}
\caption{This figure represents the standard decomposition of the Bochner- Riesz multiplier in dimension $n=1$ in terms of smooth functions supported on annuli of width comparable to the distance from $|\xi|^2+|\eta|^2=1$.}
\end{figure}
\begin{figure}[H]
\includegraphics[scale=0.85]{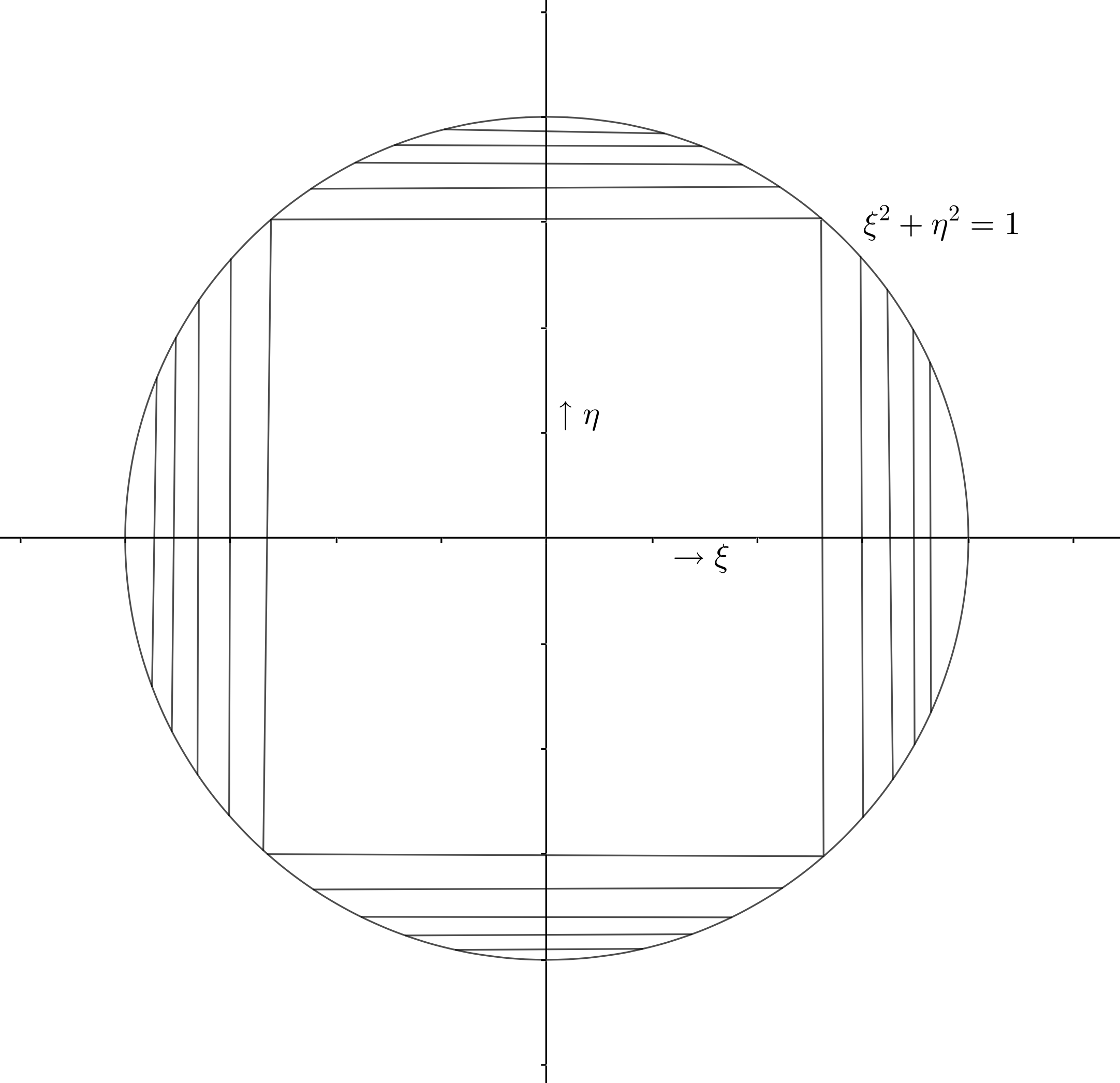}
\caption{This figure represents the decomposition of the Bochner-Riesz multiplier in dimension $n=1$. This involves decomposing along the $\xi$ and $\eta$ axes.}
\end{figure} 

\section{Decomposition of the bilinear Bochner-Riesz multiplier}\label{sec:dec}
Let  $N\geq 20n$ and $I$ be a compact interval of $\R$. Define $C^{N}(I):=\{\psi \in C^{\infty}_c(I): \sup_{x\in I, 0\leq j\leq N}|\frac{d^j}{dx^j}\psi(x)|\leq 1\}$.
Consider the partition of unity for the interval $[0,1],$ i.e.,  for $t\in[0,1]$ consider 
$$1=\sum_{j\geq 2}\psi(2^j(1-t))+\psi_0(t),$$ 
where $\psi\in C^\infty_0[\frac{1}{2},2]$ and $\psi_0\in C_0^{\infty}[-\frac{3}{4},\frac{3}{4}].$
 

Let $D=\sup_{x\in [\frac{1}{2},2], 0\leq j\leq N}|\frac{d^j}{dx^j}\psi(x)|$. Then $D^{-1}\psi\in C^{N}([\frac{1}{2},2])$.
		
Setting $t=\frac{|\xi|^2}{R^2}$ in the above we get that  
\begin{eqnarray*}
	m_R^{\alpha}(\xi,\eta)
	&=&\left(1-\frac{|\xi|^2+|\eta|^2}{R^2}\right)_+^{\alpha}\\
	&=&\sum_{j\geq 2}\psi\left(2^j\left(1-\frac{|\xi|^2}{R^2}\right)\right)\left(1-\frac{|\xi|^2}{R^2}\right)_+^{\alpha}\left(1-\frac{|\eta|^2}{R^2}\left(1-\frac{|\xi|^2}{R^2}\right)^{-1}\right)^{\alpha}_+\\
	&&+\psi_0\left(\frac{|\xi|^2}{R^2}\right)\left(1-\frac{|\xi|^2+|\eta|^2}{R^2}\right)_+^{\alpha}\\
	&=& m^{\alpha}_{1,R}(\xi,\eta)+m^{\alpha}_{0,R}(\xi,\eta),
\end{eqnarray*}
where $$m^{\alpha}_{1,R}(\xi,\eta)=\sum_{j\geq 2}\psi\left(2^j\left(1-\frac{|\xi|^2}{R^2}\right)\right)\left(1-\frac{|\xi|^2}{R^2}\right)_+^{\alpha}\left(1-\frac{|\eta|^2}{R^2}\left(1-\frac{|\xi|^2}{R^2}\right)^{-1}\right)^{\alpha}_+$$ and 
$$m^{\alpha}_{0,R}(\xi,\eta)=\psi_0\left(\frac{|\xi|^2}{R^2}\right)\left(1-\frac{|\xi|^2+|\eta|^2}{R^2}\right)_+^{\alpha}.$$
With the decomposition of the symbols as above we can write 
$$\mathcal{B}^{\alpha}_R(f,g)=\mathcal{B}_{0,R}^{\alpha}(f,g)+\mathcal{B}_{1,R}^{\alpha}(f,g),$$
where \begin{equation}\label{twoparts}\mathcal{B}_{i,R}^{\alpha}(f,g)(x)=\int_{\R^n}\int_{\R^n}m^{\alpha}_{i,R}(\xi,\eta)\hat{f}(\xi)\hat{g}(\eta)e^{2\pi ix.(\xi+\eta)}d\xi d\eta, i=0,1.\end{equation}
\vspace{0.1in}
First, we deal with the bilinear operator $\mathcal{B}_{1,R}^{\alpha}(f,g)$. We further decompose the multiplier as  $$m^{\alpha}_{1,R}(\xi,\eta)=\sum\limits_{j\geq 2}\tilde{m}^{\alpha}_{j,R}(\xi,\eta),$$
where  $$\tilde{m}^{\alpha}_{j,R}(\xi,\eta)=\psi\left(2^j\left(1-\frac{|\xi|^2}{R^2}\right)\right)\left(1-\frac{|\xi|^2}{R^2}\right)_+^{\alpha}\left(1-\frac{|\eta|^2}{R^2}\left(1-\frac{|\xi|^2}{R^2}\right)^{-1}\right)^{\alpha}_+.$$
Therefore we have \begin{equation}\label{sums}\mathcal{B}^{\alpha}_{1,R}(f,g)=\sum_{j\geq 2}T^{\alpha}_{j,R}(f,g)\end{equation} where $T^{\alpha}_{j,R}(f,g)(x)=\int_{\R^n}\int_{\R^n}\tilde{m}^{\alpha}_{j,R}(\xi,\eta)\hat{f}(\xi)\hat{g}(\eta)e^{2\pi ix.(\xi+\eta)}d\xi d\eta$.

Let us denote $\varphi_R(\xi)=\left(1-\frac{|\xi|^2}{R^2}\right)_+.$
	Now using an identity from Stein and Weiss [\cite{SW}, page 278] we have the following relation.
	\begin{equation}\label{thankstostein}\left(1-\frac{|\eta|^2}{R^2\varphi_R(\xi)}\right)^{\alpha}_+=c_{\alpha}R^{-2\alpha}\varphi_R(\xi)^{-\alpha}\int_0^{R}\left(R^2\varphi_R(\xi)-t^2\right)_+^{\beta-1}t^{2\delta+1}\left(1-\frac{|\eta|^2}{t^2}\right)^{\delta}_+dt,\end{equation}
	where $\beta>\frac{1}{2}$, $\delta>-\frac{1}{2}$ and $\beta+ \delta=\alpha.$
	
	Substituting this in the expression of ${\tilde m}^{\alpha}_{j,R}$ we get that 
	\begin{eqnarray}\label{decom}
	&& \nonumber{\tilde m}^{\alpha}_{j,R}(\xi,\eta)\\&=&c_{\alpha} \psi\left(2^j\left(1-\frac{|\xi|^2}{R^2}\right)\right)R^{-2\alpha}\int_0^{R_j}\left(R^2\varphi_R(\xi)-t^2\right)_+^{\beta-1}t^{2\delta+1}\left(1-\frac{|\eta|^2}{t^2}\right)^{\delta}_+dt,
	\end{eqnarray}
	where $R_j=R\sqrt{2^{-j+1}}$. 
	
	Notice that in the equation above due to the support of the function  $\psi\left(2^j\left(1-\frac{|\xi|^2}{R^2}\right)\right)$ the upper limit of $t$ in the integral is $R_j$.
	Define \begin{eqnarray}\label{B_j}B_{j,\beta}^{R,t}f(x)=\int_{\R^n}\hat{f}(\xi)\psi\left(2^j\left(1-\frac{|\xi|^2}{R^2}\right)\right)\left(R^2\varphi_R(\xi)-t^2\right)_+^{\beta-1}e^{2\pi i x.\xi}d\xi
	\end{eqnarray}
	and \begin{eqnarray}\label{BR}B_t^{\delta}g(x)=\int_{\R^n}\hat{g}(\eta)\left(1-\frac{|\eta|^2}{t^2}\right)^{\delta}_+e^{2\pi ix.\eta} d\eta.
	\end{eqnarray}
	This yields the following decompostion of the bilinear operator $T^{\alpha}_{j,R}$ associated with the symbol ${\tilde m}^{\alpha}_{j,R}(\xi,\eta)$.
	\begin{eqnarray} T^{\alpha}_{j,R}(f,g)(x)&=&\nonumber  \int_{\R^n}\int_{\R^n}\tilde{m}^{\alpha}_{j,R}(\xi,\eta)\hat{f}(\xi)\hat{g}(\eta)e^{2\pi ix.(\xi+\eta)}d\xi d\eta\\
		&=&\label{decomope} c_{\alpha}\int_0^{R_j}B_{j,\beta}^{R,t}f(x)B_t^{\delta}g(x)t^{2\delta+1}dt,
		\end{eqnarray}
		where $\alpha=\beta+\delta$.
		
		We will show that the decomposition of the operator as above yields new estimates for the bilinear Bochner-Riesz maximal function. Before going into the proofs of our main results we discuss some useful results concerning the Bochner-Riesz square function. 
\section{A brief discussion on the Bochner-Riesz Square function}\label{results:used}

The Bochner-Riesz square function $G^{\alpha}$ is defined by 
\begin{eqnarray*}
	G^{\alpha}(f)(x):=\left(\int_0^{\infty}|\frac{\partial}{\partial t}B_t^{\alpha+1}f(x)|^2 tdt \right)^{1/2}=\left(\int_{0}^{\infty}|\mathcal K^{\alpha}_t\ast f(x)|^2\frac{dt}{t}\right)^{1/2},
\end{eqnarray*}
where $\widehat{B_t^{\alpha}f}(\xi)=\left(1-\frac{|\xi|^2}{t^2}\right)^{\alpha}_+\hat f(\xi)$ and $\widehat{\mathcal K^{\alpha}_t}(\xi)=2(\delta+1)\frac{|\xi|^2}{t^2}
(1-\frac{|\xi|^2}{t^2})^{\alpha}_+.$ 

The square function $G^{\alpha}$ was introduced by Stein~\cite{St}. The $L^p$ boundedness \begin{eqnarray}\label{squarefun}
\|G^{\alpha}(f)\|_{L^p(\R^n)} \lesssim \|f\|_{L^p(\R^n)}
\end{eqnarray}
 of the square function have been studied extensively in the literature. It has various applications, in particular, it plays an improtant role in the study of maximal Bochner-Riesz functions. We refer the reader to~\cite{St, Sun,KanSun, Car, Ch, See, LRS2, LRS, Leesqr} and references there in for details. 
 
 Due to the derivative with respect to $t$ the operator $G^{\alpha}$ essentially behaves like the Bochner-Riesz operator $B^{\alpha}$ of order $\alpha$. The Plancherel theorem immediately yields that $G^{\alpha}$ is bounded on $L^2(\R^n)$ provided $\alpha>-\frac{1}{2}$, see~\cite{St}. It is conjectured that for $1<p\leq 2$, the estimate~(\ref{squarefun}) holds if, and only if $\alpha>n(\frac{1}{p}-\frac{1}{2})-\frac{1}{2}.$ Whereas for the range $p>2$ the behaviour of $G^{\alpha}$ is different and it is conjectured that for $p>2$, the estimate~(\ref{squarefun}) holds if, and only if $\alpha>\alpha(p)-\frac{1}{2}.$ 
 
The conjecture for the range $1<p\leq 2$ has been resolved. We know that the estimate~(\ref{squarefun}) holds if, and only if $\alpha>n(\frac{1}{p}-\frac{1}{2})-\frac{1}{2}$ for $1<p\leq 2$ and $n\geq 1$, see~\cite{Sun, KanSun, LRS2} for detail. Further, in dimensions $n=1,2$, the conjecture has been proved to hold for the range $p\geq 2$ , see~\cite{KanSun} and \cite{Car} for the case of $n=1$ and $n=2$ respectively. When $n\geq 3$ and $p>2$ the sufficient part of the conjecture is not known completely. There are many interesting developments on the conjecture and we refer to~\cite{Ch, See, LRS2, LRS, Leesqr} for more detail. The most recent develpoment in this direction is due to ~Lee~\cite{Leesqr}, where he proved the following result. 
\begin{theorem}\cite{Leesqr}\label{square:Lee}
Let $n\geq 2$. The estimate~(\ref{squarefun}) holds for $p\geq \min\{\mathfrak p_n,\frac{2(n+2)}{n}\}$ and $\alpha>n(\frac{1}{2}-\frac{1}{p})-1$.
\end{theorem}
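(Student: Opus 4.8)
The plan is to reduce \eqref{squarefun} to a single-scale estimate by a dyadic decomposition of the multiplier near the sphere, and then sum a geometric series. Writing $\widehat{\mathcal K_t^\alpha}(\xi)=m^\alpha(\xi/t)$ with $m^\alpha(\eta)=2(\alpha+1)|\eta|^2(1-|\eta|^2)_+^\alpha$, I would decompose $m^\alpha=m_0^\alpha+\sum_{j\ge1}2^{-j\alpha}m_j$, where $m_0^\alpha$ is supported where $|\eta|$ is bounded away from $1$ (its square function is a standard Littlewood--Paley operator, bounded on $L^p$ for $1<p<\infty$), and each $m_j$ is a smooth bump, bounded with its derivatives up to order $N$ after normalisation (so $m_j\in C^N$), supported where $1-|\eta|\sim2^{-j}$. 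Setting $\delta=2^{-j}$ and
\[
G_\delta f(x):=\Big(\int_0^\infty\big|[m_j(\cdot/t)\hat f\,]^\vee(x)\big|^2\,\tfrac{dt}{t}\Big)^{1/2},
\]
the triangle inequality for the $L^2(dt/t)$ norm gives $G^\alpha f(x)\le G_0^\alpha f(x)+\sum_{j\ge1}2^{-j\alpha}G_{2^{-j}}f(x)$, so the theorem reduces to the single-scale estimate
\[
\|G_\delta f\|_{L^p(\R^n)}\lesssim_\varepsilon\delta^{\frac12-\alpha(p)-\varepsilon}\,\|f\|_{L^p(\R^n)}.
\]
Indeed, in the stated range $p\ge\min\{\mathfrak p_n,\tfrac{2(n+2)}{n}\}$ one has $\alpha(p)=n(\tfrac12-\tfrac1p)-\tfrac12$, and $\sum_{j\ge1}2^{-j\alpha}\delta^{\frac12-\alpha(p)-\varepsilon}$ converges precisely when $\alpha>\alpha(p)-\tfrac12=n(\tfrac12-\tfrac1p)-1$.

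For the single-scale estimate I would first use Littlewood--Paley localisation of $f$ to $\{|\xi|\sim1\}$ together with the dilation invariance of $\mathcal K_t^\alpha$ to reduce to $\hat f$ supported in a fixed unit annulus, in which case $G_\delta f(x)^2$ is comparable to $\int_{t\asymp1}|[m_j(\cdot/t)\hat f\,]^\vee(x)|^2\,dt$. The key structural point is that $m_j(\cdot/t)$ restricts $\xi$ to an annular shell of width $\sim\delta$ whose radius moves at unit speed as $t$ varies; a delicate orthogonality argument in $t$ (pieces with $t$-separation $\gg\delta$ being essentially Fourier-disjoint) then peels off a clean factor $\delta^{1/2}$ and leaves a reverse square function estimate of the form $\big\|\big(\sum_k|f_k|^2\big)^{1/2}\big\|_p\lesssim_\varepsilon\delta^{-\alpha(p)-\varepsilon}\|f\|_p$, where $f_k$ is the piece of $f$ Fourier-supported in the $k$-th of the $\sim\delta^{-1}$ concentric $\delta$-shells tiling $\{|\xi|\sim1\}$; equivalently, one arrives at an $L^p_xL^2_t$ square function estimate for the frequency-localised half-wave propagator on $\R^n\times[1,2]$. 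The extracted $\delta^{1/2}$ is exactly the local-smoothing half-derivative gain, and it is what improves the summability threshold from $\alpha>\alpha(p)$ (which the single-scale Bochner--Riesz operator bound alone would give, since $\|G_\delta f\|_p\lesssim\sup_t\|[m_j(\cdot/t)\hat f\,]^\vee\|_p\lesssim\delta^{-\alpha(p)-\varepsilon}\|f\|_p$) to the sharp $\alpha>\alpha(p)-\tfrac12$.

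To prove the reverse square function / local smoothing estimate one decomposes each $\delta$-shell further into $\sim\delta^{-(n-1)/2}$ angular caps — the standard $\delta$-neighbourhood-of-the-sphere plate decomposition, with the $t$-average contributing the extra ``cone'' direction — and then invokes the sharp decoupling and restriction theory: the Bourgain--Demeter $\ell^2$-decoupling theorem for the cone is responsible for the range $p\ge\tfrac{2(n+2)}{n}$, while for $n\ge3$ the refined $L^2$-based restriction/extension estimates of Guth--Hickman--Iliopoulou cover $p\ge p_0(n)=2+\tfrac{12}{4n-6-k}$; together these yield the sharp power $\delta^{-\alpha(p)}$ (up to $\delta^{-\varepsilon}$) for all $p\ge\min\{\mathfrak p_n,\tfrac{2(n+2)}{n}\}$, completing the argument. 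The genuinely hard part is precisely this single-scale estimate with the sharp $\delta$-power: the elementary tools (Plancherel in $t$, C\'ordoba-type $L^p$ orthogonality for annuli, the Stein--Tomas inequality) give either the wrong power of $\delta$ or a strictly smaller range of $p$, and it is the decoupling / polynomial-partitioning machinery that closes the gap. A secondary difficulty is to organise the $t$-orthogonality step so that the $\delta^{1/2}$ gain genuinely survives alongside the decoupling bound, and to keep the accumulated $\delta^{-\varepsilon}$ losses summable against the geometric factor $2^{-j\alpha}$ — which is exactly what forces the strict inequality $\alpha>n(\tfrac12-\tfrac1p)-1$. The remaining ingredients (Littlewood--Paley localisation, rescaling, and the dyadic summation) are routine.
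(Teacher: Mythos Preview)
The paper does not prove this theorem: it is quoted verbatim from Lee~\cite{Leesqr} and used as a black box. There is therefore nothing in the paper to compare your argument against.

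As for your sketch itself, the reduction you describe --- dyadic decomposition of $m^\alpha$ near the sphere, reducing to a single-scale estimate of the form $\|G_\delta f\|_p\lesssim_\varepsilon \delta^{\frac12-\alpha(p)-\varepsilon}\|f\|_p$, and then summing in $j$ --- is indeed the standard opening move and matches the structure of Lee's argument. However, your attribution of the sharp single-scale bound is off. Lee does not use Bourgain--Demeter cone decoupling, nor Guth--Hickman--Iliopoulou (which in any case postdates~\cite{Leesqr} and concerns the paraboloid restriction problem rather than square functions directly). Instead, Lee obtains the sharp $\delta$-power via a multilinear approach: a Bourgain--Guth style broad/narrow decomposition combined with the Bennett--Carbery--Tao multilinear oscillatory integral estimates, together with an induction on scales. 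The exponent $p_0(n)$ arises from this multilinear machinery, and the $\tfrac{2(n+2)}{n}$ threshold comes from sharp bilinear restriction input. So while your overall architecture is right, the hard core of the proof --- the mechanism producing the sharp single-scale exponent over the stated range of $p$ --- is not the one you name, and the tools you invoke would not, as stated, directly yield the precise range $p\ge\min\{\mathfrak p_n,\tfrac{2(n+2)}{n}\}$ without substantial additional work to translate them into square-function form.
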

The $L^p$ estimates for the square function mentioned as above will be used in order to prove our results. Indeed, we shall need $L^p$ boundedness of certain local variants of the square function. We consider the following setting. 

For $\psi\in C^{N+1}([\frac{1}{2},2])$ and $0<\nu <\frac{1}{16}$ define $$B^{\psi}_{\nu,t}f(x)=\int_{\R^n}\psi\left(\nu^{-1}\left(1-\frac{|\xi|^2}{t^2}\right)\right)\hat{f}(\xi)e^{2\pi ix.\xi}d\xi.$$

Also, for $\phi\in C^N([-1,1])$ and $0<\nu <\frac{1}{16}$ consider the  operator  $$\phi\left(\frac{t^2-|D|^2}{\nu}\right)f(x)=\int_{\R^n}\phi\left(\frac{t^2-|\xi|^2}{\nu}\right)\hat{f}(\xi)e^{2\pi ix.\xi}d\xi.$$
Jeong, Lee and Vargas~[Lemma $2.6$ \cite{JLV}] proved that for any $\epsilon>0$, there exists $N\geq 1$ such that for $\phi\in C^N([-1,1])$ the following holds \begin{equation}\label{localised1}\left\|\left(\int_{1/2}^{2}\left|\phi\left(\frac{t^2-|D|^2}{\nu}\right)f(\cdot)\right|^2 dt\right)^{1/2}\right\|_{L^p(\R^n)}\lesssim_{\epsilon,N} \nu^{(1/2-\alpha(p))}\nu^{-\epsilon}\|f\|_{L^p(\R^n)}
\end{equation}
 when $p>\mathfrak p_n$. 
	
We need the following version of the result stated above.
\begin{lemma}\label{localisedversion} Let $n\geq 2, 0<\nu<\frac{1}{16}$ and $\epsilon>0$. Then for $p\geq \mathfrak p_n$ and $p=2,$ there exists $N\geq 1$ such that for all $\psi\in C^{N+1}([1/2,2])$ the following holds 
\begin{equation}\label{localised2}\left\|\left(\int_{0}^{\infty}\left|B^{\psi}_{\nu,t}f(\cdot)\right|^2 \frac{dt}{t}\right)^{1/2}\right\|_{L^p(\R^n)}\lesssim_{\epsilon,N} \nu^{(1/2-\alpha(p))}\nu^{-\epsilon}\|f\|_{L^p(\R^n)}
\end{equation}
where the implicit constant depends on $\epsilon$ and $N$.
\end{lemma}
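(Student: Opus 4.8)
The plan is to deduce the local square-function estimate \eqref{localised2} from the two reference estimates available to us: Lee's global bound (Theorem~\ref{square:Lee}) and the Jeong--Lee--Vargas local estimate \eqref{localised1}. The first observation is that the $t$-integral over $(0,\infty)$ can be reduced to a compact $t$-interval by scaling: substituting $\xi\mapsto t\xi$ inside $B^{\psi}_{\nu,t}f$ shows that $B^{\psi}_{\nu,t}f(x) = (B^{\psi}_{\nu,1}f_t)(x/t)$ after a dilation $f_t(x)=f(tx)$, so by a change of variables $\int_0^\infty \|B^{\psi}_{\nu,t}f\|_{L^p}^? \frac{dt}{t}$ is, after decomposing $(0,\infty)=\bigcup_{\ell\in\Z}[2^\ell,2^{\ell+1}]$, controlled by the corresponding sum of pieces on $[1/2,2]$ applied to the dilates $f(2^\ell\cdot)$; the almost-orthogonality of the frequency supports $\{|\xi|\sim 2^\ell\}$ then collapses this sum with no loss. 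Thus it suffices to prove
\[
\left\|\left(\int_{1/2}^{2}\left|B^{\psi}_{\nu,t}f\right|^2\, dt\right)^{1/2}\right\|_{L^p(\R^n)}\lesssim_{\epsilon,N}\nu^{1/2-\alpha(p)}\nu^{-\epsilon}\|f\|_{L^p(\R^n)},
\]
with $\psi$ supported near $t^2$ in the variable $1-|\xi|^2/t^2$, i.e.\ with frequency localized to the annulus $|\xi|^2 \in [t^2(1-2\nu), t^2(1-\nu/2)]$.

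The second step is to handle the discrepancy between the multiplier $\psi(\nu^{-1}(1-|\xi|^2/t^2))$ appearing in $B^{\psi}_{\nu,t}$ and the multiplier $\phi((t^2-|\xi|^2)/\nu)$ appearing in \eqref{localised1}. On the relevant annulus $|\xi|\sim t$, $t\sim 1$, we have $1-|\xi|^2/t^2 = (t^2-|\xi|^2)/t^2$, and $t^2$ is a smooth, bounded, bounded-away-from-zero factor; hence $\psi(\nu^{-1}(1-|\xi|^2/t^2)) = \Phi_t((t^2-|\xi|^2)/\nu)$ where $\Phi_t(s)=\psi(s/t^2)$ is, for $t\in[1/2,2]$, a family of $C^N$ functions supported in $[-1,1]$ with seminorms uniformly bounded in $t$ (up to a fixed constant absorbed into the implicit constant after rescaling into $C^N([-1,1])$). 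One must only check that the $t$-dependence of $\Phi_t$ does not break the $L^p(L^2_t)$ estimate; this is routine, either by writing $\Phi_t$ as an absolutely convergent superposition (over a Fourier series in $t$, or a Taylor expansion) of $t$-independent symbols each of which is handled by \eqref{localised1} with summable coefficients, or by noting that \eqref{localised1} is stated for a fixed $\phi\in C^N$ but its proof is insensitive to mild $t$-dependence. This gives \eqref{localised2} for $p>\mathfrak p_n$.

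The third step deals with the two boundary cases $p=\mathfrak p_n$ and $p=2$, which \eqref{localised1} does not directly cover ($p>\mathfrak p_n$ strictly). For $p=2$ this is elementary: by Plancherel and Fubini,
\[
\left\|\left(\int_{1/2}^2 |B^{\psi}_{\nu,t}f|^2\,dt\right)^{1/2}\right\|_{L^2}^2 = \int_{1/2}^2\int_{\R^n}\left|\psi\!\left(\nu^{-1}\!\left(1-\tfrac{|\xi|^2}{t^2}\right)\right)\right|^2|\hat f(\xi)|^2\,d\xi\,dt,
\]
and for each $\xi$ the inner $t$-set where $1-|\xi|^2/t^2 \in \supp\psi \subset [1/2,2]$ has measure $\lesssim \nu$, so the bound is $\lesssim \nu\|f\|_{L^2}^2$, matching $\nu^{1/2-\alpha(2)} = \nu^{1/2}$ since $\alpha(2)=0$; the global version over $(0,\infty)$ follows by the same scaling/orthogonality argument. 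For $p=\mathfrak p_n$ one instead invokes Lee's global estimate Theorem~\ref{square:Lee}: the local square function $B^{\psi}_{\nu,t}$ at scale $\nu$ is, after a parabolic rescaling in frequency that sends the annulus of width $\nu$ to one of width $\sim 1$, comparable to the Bochner--Riesz square function $G^{\alpha(p)-1/2-\epsilon}$-type object, and Theorem~\ref{square:Lee} provides the $L^p$ bound for $p\geq\mathfrak p_n$ with $\alpha$ down to $n(1/2-1/p)-1$, which is exactly the exponent $\alpha(p)-1/2$ up to the loss $\nu^{-\epsilon}$ that we are allowed. Alternatively, since $\mathfrak p_n = \min\{p_0(n), 2(n+2)/n\}$ and the estimate is already known open-endedly for $p>\mathfrak p_n$ and at $p=2$, one obtains $p=\mathfrak p_n$ by interpolating the $\nu$-dependent bounds: $\nu^{1/2-\alpha(p)-\epsilon}$ depends continuously on $1/p$ and the endpoint value is recovered in the limit, so a standard $\epsilon$-room argument (replacing $\epsilon$ by $\epsilon/2$ and using $p>\mathfrak p_n$ arbitrarily close) closes the case $p=\mathfrak p_n$ as well.

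\textbf{Main obstacle.} The genuinely delicate point is the third step at $p=\mathfrak p_n$: making rigorous the passage from the fixed, $t$-independent, $C^N$-symbol local estimate \eqref{localised1} (valid only for $p>\mathfrak p_n$) to a statement that also holds \emph{at} the endpoint $p=\mathfrak p_n$ and simultaneously accommodates the $t$-dependent symbol $\psi(\nu^{-1}(1-|\xi|^2/t^2))$ and the unbounded $t$-range. The scaling and orthogonality manipulations of the first two steps are routine once set up carefully; the third step requires either a careful invocation of Lee's result with the correct accounting of the rescaling Jacobian and the $\nu^{-\epsilon}$ budget, or an interpolation argument that is clean but must be stated so that the loss of $\nu^{-\epsilon}$ is uniform. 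I would expect the write-up to spend most of its effort verifying that the reduction to \eqref{localised1} is legitimate (the $t$-dependence of $\Phi_t$) and that the endpoint $p=\mathfrak p_n$ is genuinely reached.
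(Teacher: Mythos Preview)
Your proposal is correct and follows the same overall architecture as the paper's proof: Plancherel for $p=2$, a Littlewood--Paley reduction from $t\in(0,\infty)$ to $t\in[1/2,2]$ for $p>2$, reduction to the known estimate \eqref{localised1} for $p>\mathfrak p_n$, and interpolation between $p=2$ and $p>\mathfrak p_n$ for the endpoint $p=\mathfrak p_n$. Where the two diverge is in the symbol-matching step. You propose to handle the $t$-dependence of $\Phi_t(s)=\psi(s/t^2)$ by a Fourier or Taylor expansion in $t$ (or by asserting that the proof of \eqref{localised1} tolerates mild $t$-dependence), which is feasible but somewhat awkward to make rigorous with summable coefficients. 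The paper instead uses a one-line fundamental-theorem-of-calculus identity: after a preliminary rescaling $\psi_r(x)=r^{-(N+1)}\psi(rx)$ so that $\psi_r\in C^{N+1}([-1,1])$, one writes
\[
B^{\psi_r}_{\nu,t}f \;=\; \psi_r\!\left(\frac{t^2-|D|^2}{\nu/4}\right)f \;+\; \int_{1/4}^{t^2}\widetilde{\psi_r}\!\left(\frac{t^2-|D|^2}{\nu s}\right)f\,\frac{ds}{s},\qquad \widetilde{\psi_r}(x)=x\,\psi_r'(x).
\]
For each fixed $s\in[1/4,4]$ the integrand is \emph{exactly} of the form $\phi((t^2-|D|^2)/(\nu s))$ with a $t$-independent $\phi\in C^N([-1,1])$, so \eqref{localised1} applies verbatim after Minkowski's inequality in $s$. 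This buys a clean reduction with no series expansion and no appeal to the internals of the proof of \eqref{localised1}.

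One remark on your diagnosis of the main obstacle: the endpoint $p=\mathfrak p_n$ is actually the routine part---both you and the paper dispatch it by interpolation, with the $\nu^{-\epsilon}$ budget absorbing the loss. The genuine content of the lemma is the symbol-matching step above, which you treat somewhat loosely and which the FTC identity disposes of in one stroke.
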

In Lemma~\ref{localisedversion} we can take the same value of $N$ as in Lemma $2.6$ in \cite{JLV}. The proof of the lemma above may be completed using Lemma $2.6$ in \cite{JLV} with minor modifications. We present the proof here for an easy reference and completeness.
	
\begin{proof} Note that when $p=2$ the inequality \eqref{localised2} is a consequence of Plancherel theorem. 
		
	Next, let $\psi_r(x):=r^{-(N+1)}\psi(rx)$ where $r>2$ is fixed such that $\psi_r\in C^{N+1}([-1,1])$. First, consider the operator $B^{\psi_r}_{\nu,t}$. 
	
	When $p>2$ we know that by a standard Littlewood Paley decomposition, it is enough to prove the inequality \eqref{localised2} for $\frac{1}{2}<t<2$. Consider 
	\begin{eqnarray*}B^{\psi_r}_{\nu,t}f(x)&=&B^{\psi_r}_{\nu,t}f(x)-\psi_r\left(\frac{t^2-|D|^2}{4^{-1}\nu}\right)f(x)+\psi_r\left(\frac{t^2-|D|^2}{4^{-1}\nu}\right)f(x)\\
		&=&\psi_r\left(\frac{t^2-|D|^2}{4^{-1}\nu}\right)f(x)+\int_{1/4}^{t^2}	\widetilde{\psi_r}\left(\frac{t^2-|D|^2}{\nu s}\right)f(x)\frac{ds}{s},	\end{eqnarray*}
	where $\widetilde{\psi_r}(x)=r^{-(N+1)}rx\psi'(rx)$.
	
	It is easy to verify that $\widetilde{\psi_r}$ belongs to $C^N([-1,1])$. Using Minkowski's integral inequality and triangle inequality in the identity above, we get that 
\begin{eqnarray*}
\left\|\left(\int_{1/2}^{2}\left|B^{\psi_r}_{\nu,t}f(\cdot)\right|^2 \frac{dt}{t}\right)^{1/2}\right\|_{L^p(\R^n)} 
&\lesssim&\left\|\left(\int_{1/2}^{2}\left|\psi_r\left(\frac{t^2-|D|^2}{4^{-1}\nu}\right)f(\cdot)\right|^2 dt\right)^{1/2}\right\|_{L^p(\R^n)}\\
&&+\int_{1/4}^{4}\left\|\left(\int_{1/2}^{2}\left|\psi_r\left(\frac{t^2-|D|^2}{\nu s}\right)f(\cdot)\right|^2 dt\right)^{1/2}\right\|_{L^p(\R^n)}	\frac{ds}{s}.
\end{eqnarray*}
From \eqref{localised1} we know that for all $s\in[1/4,4]$ the following holds.  $$\left\|\left(\int_{1/2}^{2}\left|\psi_r\left(\frac{t^2-|D|^2}{\nu s}\right)f(\cdot)\right|^2 dt\right)^{1/2}\right\|_{L^p(\R^n)}\lesssim_{\epsilon,N}(\nu s)^{(1/2-\alpha(p))}\nu^{-\epsilon}\|f\|_{L^p(\R^n)}.$$
	Therefore, we get that  $$\left\|\left(\int_{1/2}^{2}\left|B^{\psi_r}_{\nu,t}f(\cdot)\right|^2 \frac{dt}{t}\right)^{1/2}\right\|_{L^p(\R^n)}\lesssim_{\epsilon, N} \nu^{(1/2-\alpha(p))}\nu^{-\epsilon}\|f\|_{L^p(\R^n)}$$
	for $p> \mathfrak p_n$.
	
	The standard interpolation argument between $p=2$ and $p>\mathfrak p_n$ yields that the inequality as above holds for $p=\mathfrak p_n$.
	
	Further, it is easy to verify that $r^{N+1}B^{\psi_r}_{\nu,t}f(x)=B^{\psi}_{r^{-1}\nu,t}f(x)$. Note that for a fixed $r>2$ we can choose $\nu$ small enough so that $r^{-1}\nu<1/16$.
	This completes the proof of the inequality \eqref{localised2}.
	\end{proof}
	Next, we consider the maximal function $\sup_{t>0}|B^\psi_{\nu,t}f(x)|$ associated with the frequency localised Bochner-Riesz operators. The square function estimate proved in Lemma \ref{localisedversion} gives us the following estimate 
	\begin{equation}\label{localisedmaximal}
	\|\sup_{t>0}|B^{\psi}_{\nu,t}f|\|_{L^p(\R^n)}\lesssim_{\epsilon, N}\nu^{-\epsilon}	\nu^{\alpha(p)}\|f\|_{L^p(\R^n)}
	\end{equation}
for $p=2$ and $p\geq \mathfrak p_n.$

Note that if $m\in C^{\infty}_0([1/2,2])$ is such that $A^{-1}m\in C^N([1/2,2])$ for some $A>0$, then from \eqref{localisedmaximal} we get that \begin{equation}\label{localisedmaximal2}
 	\|\sup_{t>0}|B^{m}_{\nu,t}f|\|_{L^p(\R^n)}\lesssim_{\epsilon,N}A\nu^{-\epsilon}	\nu^{\alpha(p)}\|f\|_{L^p(\R^n)}
 \end{equation}
holds for the same range of $p$ as in \eqref{localisedmaximal}.
	
	Let $h$ be a smooth function supported on a compact interval $I$ such that   $C^{-1}h\in C^N(I)$ for some $C>0$. In what follows we shall use the statement that $h_k(x)=x^kh(x),k\geq 1$ and $h(x)$ behave similarly. This would mean that the corresponding square function and the maximal function for the operator $B^{h_k}_{\nu,t}$ defined above are bounded for the same range of $p$ as that of $B^{h}_{\nu,t}$ with the same constants except for an extra factor of $k!$. 

Next, we make use of a technique from~[\cite{SW}, page 277-278] to derive the $L^p$ boundedness of the operator $f\rightarrow \sup_{R>0}\left(R^{-1}\int_0^{R}|B_t^{\delta}f(\cdot)|^2dt\right)^{1/2}$. This idea gives us the following estimate. 
\begin{lemma}\label{steinlemma} The operator 
		\begin{equation*}\label{steinsquare}f\rightarrow \sup_{R>0}\left(R^{-1}\int_0^{R}|B_t^{\delta}f(\cdot)|^2dt\right)^{1/2}\end{equation*} is bounded on $L^p(\R^n)$ for $p=2$ and $p\geq \mathfrak p_n$ where $\delta>\alpha(p)-1/2$.
		\end{lemma}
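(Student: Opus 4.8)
The plan is to deduce Lemma~\ref{steinlemma} from the localized square function estimate of Lemma~\ref{localisedversion} via the averaging identity of Stein and Weiss. First I would recall the representation from \cite{SW}, page~277--278: writing $\delta=\beta+\delta'$ with a small $\beta>0$ and $\delta'>\alpha(p)-1/2$, one has an identity expressing $\left(1-\frac{|\eta|^2}{R^2}\right)^{\delta}_+$ as a weighted average of dilates $\left(1-\frac{|\eta|^2}{t^2}\right)^{\delta'}_+$ over $0<t<R$, against a kernel of the form $R^{-2\delta}(R^2-t^2)^{\beta-1}_+t^{2\delta'+1}$; this is exactly the specialization of \eqref{thankstostein} with $\varphi_R\equiv 1$. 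Applying Cauchy--Schwarz in $t$ to this average, I would bound $|B_R^{\delta}f(x)|^2$ pointwise by a constant times $R^{-1}\int_0^R |B_t^{\delta'}f(x)|^2\,dt$ (after absorbing the normalizing weights, which integrate to a constant uniformly in $R$). This reduces matters: it suffices to prove the $L^p$ bound for $\sup_{R>0}\left(R^{-1}\int_0^R |B_t^{\delta'}f|^2\,dt\right)^{1/2}$, and the parameter $\delta'$ can be taken just above $\alpha(p)-1/2$.

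Next I would dilate to freeze the scale. By scaling, $R^{-1}\int_0^R |B_t^{\delta'}f(x)|^2\,dt$ at scale $R$ is comparable to the same quantity with $R=1$ after replacing $f$ by a dilate; so the supremum over $R$ is controlled once we bound $\left(\int_0^\infty |B_t^{\delta'}f|^2\,\tfrac{dt}{t}\right)^{1/2}$-type expressions uniformly. More precisely I would split the $t$-integral dyadically, $\int_0^R = \sum_{k\ge 0}\int_{2^{-k-1}R}^{2^{-k}R}$, and on each dyadic block $t\sim 2^{-k}R$ decompose $\left(1-\frac{|\xi|^2}{t^2}\right)^{\delta'}_+$ à la Littlewood--Paley in the distance to the sphere $|\xi|=t$: write it as $\sum_{\nu=2^{-\ell}} \psi\!\left(\nu^{-1}(1-|\xi|^2/t^2)\right)\left(1-\frac{|\xi|^2}{t^2}\right)^{\delta'}_+$ plus a harmless main term, so that $B_t^{\delta'}$ becomes a sum over $\nu=2^{-\ell}$, $\ell\ge 0$, of operators of the form $\nu^{\delta'}B^{\psi_\ell}_{\nu,t}$ with $\psi_\ell\in C^{N+1}([1/2,2])$ uniformly (here I use the convention, noted in the paragraph before the lemma, that $x^k h(x)$ and $h(x)$ behave alike). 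Taking $L^2(\tfrac{dt}{t})$-norms and then $L^p$, Lemma~\ref{localisedversion} gives each piece a bound $\nu^{\delta'}\cdot\nu^{1/2-\alpha(p)}\nu^{-\epsilon}\|f\|_p = \nu^{\delta'-\alpha(p)+1/2-\epsilon}\|f\|_p$, and since $\delta'>\alpha(p)-1/2$ we may choose $\epsilon$ small enough that the exponent $\delta'-\alpha(p)+1/2-\epsilon$ is positive; summing the geometric series in $\nu=2^{-\ell}$ converges. For $p=2$ this is of course immediate from Plancherel without the decomposition.

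The one genuine subtlety — the step I expect to be the main obstacle — is converting the \emph{square function over all scales} supplied by Lemma~\ref{localisedversion} into control of the \emph{supremum of local averages} $\sup_R (R^{-1}\int_0^R|\cdot|^2)^{1/2}$, uniformly in $R$, without losing the gain. The Stein--Weiss averaging identity is precisely what handles this: it trades the pointwise supremum in $R$ for an honest integral, so that no maximal operator over $R$ survives and the remaining object is a single square function to which Lemma~\ref{localisedversion} applies directly. The remaining care is bookkeeping: checking that the normalizing weight $R^{-2\delta}(R^2-t^2)^{\beta-1}_+t^{2\delta'+1}$ in the identity is a probability-type density in $t$ on $(0,R)$ with total mass bounded independently of $R$ (this is where $\beta>1/2$ enters, exactly as in \eqref{thankstostein}), and that after the Littlewood--Paley decomposition in $\nu$ the functions $\psi_\ell$ lie in $C^{N+1}([1/2,2])$ with norms uniform in $\ell$ so that the constant $N$ from Lemma~\ref{localisedversion} can be fixed once and for all. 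Finally, interpolation between $p=2$ and $p>\mathfrak p_n$ covers the endpoint $p=\mathfrak p_n$, completing the proof.
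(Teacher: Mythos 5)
The decisive step in your plan does not hold: controlling $\sup_{R>0}\bigl(R^{-1}\int_0^R|B_t^{\delta'}f|^2dt\bigr)^{1/2}$ by ``$\bigl(\int_0^\infty|B_t^{\delta'}f|^2\,\tfrac{dt}{t}\bigr)^{1/2}$-type expressions'' fails, because that global square function is not bounded on any $L^p$ — it is not even finite for a generic Schwartz function. Indeed, at each fixed $\xi$ the multiplier $(1-|\xi|^2/t^2)_+^{\delta'}$ tends to $1$ as $t\to\infty$, so $B_t^{\delta'}f\to f$ and $\int^{\infty}|B_t^{\delta'}f(x)|^2\,\tfrac{dt}{t}$ diverges wherever $f(x)\neq 0$. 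The divergence lives exactly in the piece you dismiss as a ``harmless main term'': the bulk part $\psi_0(|\xi|^2/t^2)\,(1-|\xi|^2/t^2)^{\delta'}$ of your Littlewood--Paley decomposition. Your annular pieces $\nu^{\delta'}B^{\psi_\ell}_{\nu,t}$ are fine — Lemma~\ref{localisedversion} gives the gain $\nu^{\delta'+1/2-\alpha(p)-\epsilon}$, summable since $\delta'>\alpha(p)-1/2$ — but the bulk piece cannot be fed into an $L^2(dt/t)$ norm over all scales; it must be treated by keeping the averaged structure $\sup_R R^{-1}\int_0^R$ and using a pointwise bound $\sup_{t>0}|\cdot|\lesssim \mathcal{M}f$, after which the average of a bounded quantity is trivially controlled. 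As written, the reduction is false and the proof has a genuine gap. (Your opening use of the Stein--Weiss identity \eqref{thankstostein} is also circular here: it bounds the single-scale quantity $\sup_R|B_R^{\delta}f|$ by precisely the object of the Lemma, i.e.\ it reduces the maximal Bochner--Riesz operator to this Lemma rather than the Lemma to anything simpler, and passing from $\delta$ to $\delta'$ gains nothing since the hypothesis $\delta>\alpha(p)-1/2$ is open.)

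For comparison, the paper avoids this issue by a telescoping decomposition $B_t^{\delta}f=\sum_{k=1}^{d}\bigl(B_t^{\delta+k-1}f-B_t^{\delta+k}f\bigr)+B_t^{\delta+d}f$ with $\delta+d>\tfrac{n-1}{2}$. Each difference carries the extra factor $|\xi|^2/t^2$ in its multiplier, so after the trivial domination $R^{-1}\leq t^{-1}$ it is exactly Stein's square function $G^{\delta+k-1}$, to which Theorem~\ref{square:Lee} applies for $p\geq\mathfrak p_n$ and $\delta>\alpha(p)-1/2$; the remaining high-order term $B_t^{\delta+d}$ has integrable kernel, is dominated by $\mathcal{M}f$, and its $R^{-1}\int_0^R$ average is then harmless. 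Your annular decomposition plus Lemma~\ref{localisedversion} is a workable substitute for the telescoping-plus-Theorem~\ref{square:Lee} step, but the maximal-function treatment of the low-frequency bulk is the ingredient you cannot skip; supply it and your argument closes.
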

		\begin{proof} The case of $p=2$ is easy as usual. When $p>2$ we write $$B_t^{\delta}f=\sum_{k=1}^{d}\left(B_t^{\delta+k-1}f-B_t^{\delta+k}f\right) + B_t^{\delta+d}f,$$
		where $d$ is chosen so that $\delta+d>\frac{n-1}{2}$.
		
		This implies that  $$\left(\int_0^{R}|B_t^{\delta}f(x)|^2dt\right)^{1/2}\leq \sum_{k=1}^{d}\left(\int_0^{R}|B_t^{\delta+k}f(x)-B_t^{\delta+k-1}f(x)|^2dt\right)^{1/2} + \left(\int_0^{R}|B_t^{\delta+d}f(x)|^2dt\right)^{1/2}.$$
		Since $\delta+d >\frac{n-1}{2}$ the convolution kernel of $B_t^{\delta}$ is an integrable function and consequently $\sup_{t>0}|B_t^{\delta}f|\leq c(\delta+d,n)\mathcal{M}f(x)$,
		where $\mathcal{M}(\cdot)$ is the Hardy-Littlewood Maximal function.
		Therefore $\sup_{R>0}\left(R^{-1}\int_0^{R}|B_t^{\delta+d}f(x)|^2dt\right)^{1/2}$ is bounded on $L^p(\R^n),1<p\leq \infty$.
	
		Next, consider the term $$\sup_{R>0}\left(R^{-1}\int_0^{R}|B_t^{\delta+k}f(x)-B_t^{\delta+k-1}f(x)|^2dt\right)^{1/2}$$ for a fixed $1\leq k\leq d$. Clearly the above can be dominated by $$\left(\int_0^{\infty}|B_t^{\delta+k}f(x)-B_t^{\delta+k-1}f(x)|^2 t^{-1}dt\right)^{1/2}$$
		which is nothing but the square function for Bochner-Riesz means of order $\delta+k-1$. Invoking Theorem~\ref{square:Lee} from \cite{Leesqr} we get the desired $L^p(\R^n)$ estimates for $p\geq \mathfrak p_n$ when $\delta>\alpha(p)-1/2$ for all $k\geq 1$. This completes the proof. 
		\end{proof}	
\section{Proof of Theorem~\ref{maintheorem}: Boundedness of bilinear Bochner-Riesz maximal function}\label{proof:max}
This section is devoted to proving Theorem~\ref{maintheorem}. We shall prove the estimates for $p_j=2$ or $p_i\geq \mathfrak p_n,~i,j=1,2$ and the remaining cases in Theorem~\ref{maintheorem} will be obtained by interpolation arguments. 

Recall that from section~\ref{sec:dec} we have the following decomposition of the bilinear Bochner-Riesz operator $\mathcal B^{\alpha}_R$. 
$$\mathcal{B}^{\alpha}_R(f,g)=\mathcal{B}_{0,R}^{\alpha}(f,g)+\mathcal{B}_{1,R}^{\alpha}(f,g).$$ In order to prove Theorem~\ref{maintheorem} it is enough to prove the desired estimates for the bilinear maximal functions 
$$\mathcal B^{\alpha}_{i,*}(f,g)(x)=\sup_{R>0}|\mathcal{B}^{\alpha}_{i,R}(f,g)(x)|,~i=0,1.$$
We will deal with both the maximal functions separately. Let us first consider the case of $i=1$. 
\subsection{Boundedness of the bilinear maximal function $\mathcal B^{\alpha}_{1,*}(f,g)$}
Recall that from section~\ref{sec:dec} we have the following decomposition  \begin{equation*}\mathcal{B}^{\alpha}_{1,R}(f,g)(x)=\sum_{j\geq 2}T^{\alpha}_{j,R}(f,g)(x),
\end{equation*} 
where $T^{\alpha}_{j,R}(f,g)(x)=\int_{\R^n}\int_{\R^n}\tilde{m}^{\alpha}_{j,R}(\xi,\eta)\hat{f}(\xi)\hat{g}(\eta)e^{2\pi ix.(\xi+\eta)}d\xi d\eta$. Therefore, it is enough to consider the maximal function $$T^{\alpha}_{j,*}(f,g)(x)=\sup_{R>0}|T^{\alpha}_{j,R}(f,g)(x)|~\text{for~ each~} j\geq 2.$$

Applying Cauchy Schwartz inequality we get that 
$$|T_{j,R}^{\alpha}(f,g)(x)|\leq c_{\alpha} \left(\int_0^{R_j}|B_{j,\beta}^{R,t}f(x)t^{2\delta+1}|^2 dt\right)^{1/2}\left(\int_0^{R_j}|B_t^{\delta}g(x)|^2dt\right)^{1/2}.$$
Making a change of variable $t\rightarrow Rt$ in the integral $\left(\int_0^{R_j}|B_{j,\beta}^{R,t}f(x)t^{2\delta+1}|^2 dt\right)^{1/2}$ we get that $$\int_0^{R_j}|B_{j,\beta}^{R,t}f(x)t^{2\delta+1}|^2 dt=R^{4\alpha-1}\int_0^{\sqrt{2^{-j+1}}}|{S}_{j,\beta}^{R,t}f(x)t^{2\delta+1}|^2 dt,$$
where $${S}_{j,\beta}^{R,t}f(x)=\int_{\R^n}\psi\left(2^j\left(1-\frac{|\xi|^2}{R^2}\right)\right)\left(1-\frac{|\xi|^2}{R^2}-t^2\right)_+^{\beta-1}\hat{f}(\xi)e^{2\pi ix.\xi}d\xi.$$ 
Finally, we get that  
\begin{eqnarray}\label{reduction}
T^{\alpha}_{j,*}(f,g)(x)
&\leq&  2^{-j/4}\sup_{R>0}\left(\int_0^{\sqrt{2^{-j+1}}}|{S}_{j,\beta}^{R,t}f(x)t^{2\delta+1}|^2dt\right)^{1/2}\\
&&\nonumber \left(\sup_{R>0}R_j^{-1}\int_0^{R_j}|B_t^{\delta}g(x)|^2dt\right)^{1/2}.
\end{eqnarray}
We have the following result for the maximal function involving the operator ${S}_{j,\beta}^{R,t}$ in the inequality above. 
\begin{theorem}\label{maxsquare}
Let $n\geq 2.$ For $p\geq \mathfrak p_n$ or $p=2$ and $\beta> \alpha(p)+1/2,$  the following estimate holds 
\begin{eqnarray}\label{reducedoperator} \left\|\sup_{R>0}\left(\int_{0}^{\sqrt{2^{-j+1}}}|{S}_{j,\beta}^{R,t}f(\cdot)t^{2\delta+1}|^2 dt\right)^{1/2}\right\|_{L^p(\R^n)}\lesssim 2^{j(\alpha(p)-\alpha+\frac{1}{4}+\epsilon)}\|f\|_{L^p(\R^n)}.
\end{eqnarray}
\end{theorem}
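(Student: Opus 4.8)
The plan is to reduce the maximal estimate for $S_{j,\beta}^{R,t}$ to the frequency-localized square function estimate of Lemma~\ref{localisedversion} (and its maximal counterpart~\eqref{localisedmaximal}), after rescaling the parameter $t$ so that the cutoff $\psi(2^j(1-|\xi|^2/R^2))$ becomes a cutoff of width $\nu\sim 2^{-j}$ in the standard normalization. First I would note that the factor $t^{2\delta+1}$ is harmless: since the $t$-integral runs over $0<t<\sqrt{2^{-j+1}}$, we have $t^{2\delta+1}\lesssim 2^{-j(2\delta+1)/2}$ when $\delta\ge -1/2$ (and for $\delta<-1/2$ one splits the range dyadically in $t$, picking up only $\epsilon$-losses), so this contributes a clean power $2^{-j(\delta+1/2)}$ to the bound; combined with $\beta+\delta=\alpha$ this is exactly what produces the exponent $\alpha(p)-\alpha+\tfrac14$ in~\eqref{reducedoperator}, modulo the $2^{j/4}$ we expect to see from the square-function scaling.

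The core step is to analyze the operator
$$S_{j,\beta}^{R,t}f(x)=\int_{\R^n}\psi\!\left(2^j\Big(1-\tfrac{|\xi|^2}{R^2}\Big)\right)\Big(1-\tfrac{|\xi|^2}{R^2}-t^2\Big)_+^{\beta-1}\hat f(\xi)\,e^{2\pi i x\cdot\xi}\,d\xi.$$
On the support of $\psi(2^j(1-|\xi|^2/R^2))$ we have $1-|\xi|^2/R^2\sim 2^{-j}$, so with $t^2$ ranging up to $2^{-j+1}$ the quantity $1-|\xi|^2/R^2-t^2$ is, after factoring out $2^{-j}$, of the form $2^{-j}\,\Phi$ where $\Phi=2^{j}(1-|\xi|^2/R^2)-2^{j}t^2$ lives on a bounded interval. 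Thus $(1-|\xi|^2/R^2-t^2)_+^{\beta-1}=2^{-j(\beta-1)}\big(2^{j}(1-|\xi|^2/R^2)-s\big)_+^{\beta-1}$ with $s=2^{j}t^2\in(0,2)$. The idea then is: for each fixed $s$, the multiplier $\xi\mapsto \psi(2^j(1-|\xi|^2/R^2))\,(2^j(1-|\xi|^2/R^2)-s)_+^{\beta-1}$ is of the form $h_s(2^j(1-|\xi|^2/R^2))$ where, because $\beta>\alpha(p)+1/2>1/2$, the function $h_s(u)=\psi(u)(u-s)_+^{\beta-1}$ is a fixed (in $s$) element of a bounded subset of $C^{N+1}$ of a compact interval in $u$ — here one uses $\beta-1>-1/2$ together with the smooth truncation $\psi$ supported in $[1/2,2]$ so that $(u-s)_+^{\beta-1}$ is either smooth or, near $u=s$, an integrable singularity that can be regularized by the remark following~\eqref{localisedmaximal2} about $h_k(x)=x^k h(x)$ behaving like $h$. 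Consequently $S_{j,\beta}^{R,t}f$ is, up to the factor $2^{-j(\beta-1)}$ and the change of variables $\nu=2^{-j}$, precisely an operator of the type $B^{\psi}_{\nu,R}$ from Lemma~\ref{localisedversion}, evaluated with the scale $R$ in place of $t$ and with an extra parameter $s$ that only moves $h_s$ around in a fixed bounded family of symbols.

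Having made this identification, I would finish as follows. Write the $t$-integral as an $s$-integral ($s=2^jt^2$, $ds = 2^{j+1}t\,dt$), bound $t^{2\delta+1}$ as above, and bring the supremum over $R$ inside using Minkowski; this replaces $\sup_R(\int_0^{\sqrt{2^{-j+1}}}|S_{j,\beta}^{R,t}f|^2 t^{2\delta+1}\,dt)^{1/2}$ by a constant times $2^{-j(\beta-1)}2^{-j(\delta+1/2)}2^{-j/2}\int_0^2 \|\sup_R|B^{h_s}_{2^{-j},R}f(\cdot)|\|$-type quantities — except that we must keep the $L^2(dt)$-norm in $t$ outside, so it is cleaner to instead invoke directly the square-function bound: the left side is controlled by $2^{-j(\beta-1)}2^{-j(\delta+1/2)}$ times $\sup_s \big\|(\int_0^\infty |B^{h_s}_{2^{-j},R}f|^2\,\frac{dR}{R})^{1/2}\big\|_{L^p}$ after a further harmless change of variables in $R$ and a Littlewood–Paley reduction to $R\sim 1$, which by Lemma~\ref{localisedversion} is $\lesssim_{\epsilon,N} 2^{-j(1/2-\alpha(p))}2^{j\epsilon}\|f\|_{L^p}$ uniformly in $s$ (the $C^{N+1}$ norms of $h_s$ being uniformly bounded). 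Collecting the powers of $2^j$: $-(\beta-1)-(\delta+1/2)-(1/2-\alpha(p))=\alpha(p)-\alpha+1/4$ using $\beta+\delta=\alpha$, which is the claimed exponent (the $+1/4$ matching the $2^{-j/2}$ from $ds$ against $2^{+j/4}$ elsewhere — one should double-check this arithmetic carefully). The main obstacle I anticipate is precisely this bookkeeping of scaling exponents together with the need to commute $\sup_R$ past the $L^2_t$ norm and the $s$-integral: one wants the maximal operator in $R$ of a square function in an auxiliary parameter, and the cleanest route is probably to dominate $\sup_R$ pointwise by the full square function $(\int_0^\infty\cdots \frac{dR}{R})^{1/2}$ as in the proof of Lemma~\ref{steinlemma}, at the cost of an additional $\epsilon$-loss, rather than trying to prove a genuine maximal-square estimate from scratch; verifying that $h_s(u)=\psi(u)(u-s)_+^{\beta-1}$ really does lie in a bounded $C^{N+1}$ family uniformly in $s\in(0,2)$ (handling the endpoint behavior where $s$ approaches the support of $\psi$) is the one genuinely delicate point.
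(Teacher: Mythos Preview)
Your proposal has a genuine gap at exactly the point you yourself flag as ``the one genuinely delicate point.'' The function $h_s(u)=\psi(u)(u-s)_+^{\beta-1}$ is \emph{not} a bounded family in $C^{N+1}([1/2,2])$ uniformly in $s\in(0,2)$. The hypothesis is $\beta>\alpha(p)+1/2$, so $\beta-1$ may be any number larger than $-1/2$ (e.g.\ $\beta-1=-1/4$); then $(u-s)_+^{\beta-1}$ has a genuine singularity at $u=s$, and once $s$ enters $[1/2,2]=\operatorname{supp}\psi$ the product $\psi(u)(u-s)_+^{\beta-1}$ is not even continuous, let alone $C^{N+1}$ with $N\ge 20n$. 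The remark after~\eqref{localisedmaximal2} about $x^k h(x)$ behaving like $h(x)$ concerns multiplication of a smooth $h$ by polynomials and does nothing to tame a fractional-power singularity. Consequently neither Lemma~\ref{localisedversion} nor \eqref{localisedmaximal} is applicable to $B^{h_s}_{2^{-j},R}$ for those values of $s$, and the argument breaks down.

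What you have outlined is in fact exactly the easy half of the paper's proof. The paper splits the range of $t^2$ at $2^{-j-1-\epsilon_0}$ (equivalently, $s=2^j t^2$ at $\tfrac12-\epsilon_0$). For $s<\tfrac12-\epsilon_0$ one has $u-s\ge c>0$ on $\operatorname{supp}\psi$, so $h_s$ really is uniformly smooth and your reduction to \eqref{localisedmaximal}/\eqref{localisedmaximal2} goes through; this is the paper's Case~I (estimate~\eqref{easypart}), handled by the Taylor expansion \eqref{taylorexansion}. The substantive work lies in Case~II, $s\in[\tfrac12-\epsilon_0,2]$, where the singularity sits inside the support of $\psi$. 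There the paper rewrites the symbol as $(1-t^2)^\gamma\psi(2^j(1-|\xi|^2/R^2))(1-|\xi|^2/(R^2(1-t^2)))_+^\gamma$, performs a second dyadic decomposition of the Bochner--Riesz factor $(1-|\xi|^2/(s^2R^2))_+^\gamma$ at scales $2^{-k}$ with $k\ge j-2$, and then a further partition into annuli of width $2^{-(1+\epsilon)k}$ together with Taylor expansions of both cutoffs to separate the $R$- and $s$-dependence (Proposition~\ref{maxsquare2}, with the auxiliary operators $U,V,P,Q$). Only after this multiscale decoupling can one invoke Lemma~\ref{localisedversion} piece by piece. Your proposal misses this entire mechanism; without it there is no way to handle the range $s\in[\tfrac12,2]$.
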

We postpone the proof of Theorem~\ref{maxsquare} to the next subsection. Assuming Theorem~\ref{maxsquare} we complete the estimate for the maximal function $\mathcal B^{\alpha}_{1,*}$.

Note that the H\"{o}lder inequality in the estimate \eqref{reduction} yields that 
\begin{eqnarray*} &&\|T^{\alpha}_{j,*}(f,g)\|_{L^p(\R^n)} \\&\lesssim& 2^{-j/4}\left\|\sup_{R>0}\left(\int_0^{\sqrt{2^{-j+1}}}|{S}_{j,\beta}^{R,t}f(\cdot)t^{2\delta+1}|^2 dt\right)^{1/2}\right\|_{L^{p_1}(\R^n)}\left\|\left(\sup_{R>0}R_j^{-1}\int_0^{R_j}|B_t^{\delta}g(\cdot)|^2dt\right)^{1/2}\right\|_{L^{p_2}(\R^n)},
	\end{eqnarray*}
where $\delta+\beta=\alpha.$

Invoking the estimates from Theorem~\ref{maxsquare} and Lemma \ref{steinlemma} we get that 
\begin{eqnarray}\label{better} \|T^{\alpha}_{j,*}(f,g)\|_{L^p(\R^n)} & \lesssim_{n,\alpha,p_1,p_2}&2^{-j(\alpha-\alpha(p_1))}\|f\|_{L^{p_1}(\R^n)}\|g\|_{L^{p_2}(\R^n)}
\end{eqnarray} 
where $\beta>\alpha(p_1)+1/2$ for $p_1=2$ or $p_1\geq \mathfrak p_n$ and $\delta>\alpha(p_2)-1/2$ for $p_2=2$ or $p_2\geq \mathfrak p_n$, which is same as saying that $\alpha>\alpha(p_1)+\alpha(p_2)=\alpha_*(p_1,p_2)$ when $p_i=2$ or $p_i\geq \mathfrak p_n$ for $i=1,2$. 

The decomposition \eqref{sums} along with the estimate above implies that for $\alpha>\alpha_*(p_1,p_2)$ we have 
\begin{eqnarray} \label{b1*}\|\mathcal B^{\alpha}_{1,*}(f,g)\|_{L^p(\R^n)} & \lesssim_{n,\alpha,p_1,p_2}&\|f\|_{L^{p_1}(\R^n)}\|g\|_{L^{p_2}(\R^n)}.
\end{eqnarray} 
This proves the desired estimate for the maximal function $\mathcal B^{\alpha}_{1,*}(f,g)$ under the assumption that Theorem~\ref{maxsquare} holds.  
\subsection{Proof of Theorem~\ref{maxsquare}\label{sec:maxsquare}}
We set $\beta=\gamma+1$ in ${S}_{j,\beta}^{R,t}f$ for convenience. 

Note that when $t^2\in[0,2^{-j-1})$ we can rewrite the function  $$\psi\left(2^j\left(1-\frac{|\xi|^2}{R^2}\right)\right)\left(1-\frac{|\xi|^2}{R^2}-t^2\right)_+^{\gamma}=\psi\left(2^j\left(1-\frac{|\xi|^2}{R^2}\right)\right)\left(1-\frac{|\xi|^2}{R^2}-t^2\right)^{\gamma}.$$ 
Moreover, in this range of $t,$ the multiplier has no singularity. Indeed it behaves like $2^{-j\gamma}$ times a smooth function supported in the annulus of width of the order $R\sqrt{2^{-j}}$. 

With this observation in mind, we split the interval $[0,2^{-j+1}]$, the range of $t^2$ for the multiplier $\left(1-\frac{|\xi|^2}{R^2}-t^2\right)_+^{\beta-1}$, into two subintervals $[0,2^{-j-1-\epsilon_0}]$ and $[2^{-j-1-\epsilon_0},2^{-j+1}]$ and deal with the corresponding operators separately. Here we have taken $0<\epsilon_0<<1$ to be a fixed number.  
\subsection*{Case I: When $t\in[0,\sqrt{2^{-j-1-\epsilon_0}}]$}
For $\gamma>-1$ and $p=2$ or $p\geq \mathfrak p_n$, we have 
\begin{eqnarray}\label{easypart}
\left\|\sup_{R>0}\left(\int_0^{\sqrt{2^{-j-1-\epsilon_0}}}|{S}_{j,\gamma+1}^{R,t}f(x)t^{2\delta+1}|^2 dt\right)^{1/2}\right\|_{L^p(\R^n)}&\lesssim & 2^{-j(\delta +\gamma+3/4-\alpha(p))}\|f\|_{L^p(\R^n)}.
\end{eqnarray}

We first consider the range $-1<\gamma<0.$ Set  $\gamma=-\rho$, where $\rho>0$.
By Taylor's expansion we can write 
\begin{equation}\label{taylorexansion}\left(1-\frac{|\xi|^2}{R^2}\right)^{-\rho}\left(1-\frac{t^2}{1-\frac{|\xi|^2}{R^2}} \right)^{-\rho}=2^{j\rho}\left(2^{j}\left(1-\frac{|\xi|^2}{R^2}\right)\right)^{-\rho}\sum_{k\geq 0}\frac{\Gamma(\rho+k)}{\Gamma(\rho)k!}\left(\frac{2^jt^2}{2^j(1-\frac{|\xi|^2}{R^2})}\right)^k.\end{equation}
Observe that the series above converges as $\frac{t^2}{1-\frac{|\xi|^2}{R^2}}\leq 2^{-j-1-\epsilon_0}2^{j+1}<1.$
Therefore, using \eqref{taylorexansion} we get that 
\begin{eqnarray}\label{s_j}{S}_{j,\gamma+1}^{R,t}f&=&2^{j\rho}\sum_{k\geq0}\frac{\Gamma(\rho+k)}{\Gamma(\rho)k!}\left(2^jt^2\right)^kB^{\psi^k}_{2^{-j},R}f.
\end{eqnarray}
Denote $\psi^k(x):= x^{-k-\rho}\psi(x)$. Observe that $\psi^k\in C^{\infty}_0([1/2,2])$ and it satisfies the estimate
 $$\sup_{x\in[1/2,2],0\leq l\leq N}\left|\frac{d^l\psi^k}{dx^l}\right|\leq C(\rho)2^{N+k} k^{N+1}$$
 for $N\geq 20n$. 
Therefore, the corresponding maximal function is  bounded on $L^p(\R^n)$ with an additional factor of  $ 2^{N+k} k^{N}2^{j\alpha(p)}$, see the estimate~\eqref{localisedmaximal2} for detail. More precisely, we get that  $$\|\sup_{R>0}|B^{\psi^k}_{2^{-j},R}f|\|_{L^p(\R^n)}\lesssim_{N,p_1}2^{N+k} k^{N}2^{j\alpha(p)}\|f\|_{L^p(\R^n)}$$
where $p=2$ or $p \geq \mathfrak p_n$.

Using the Minkowski's integral inequality and the fact that $2^jt^2\leq 2^{-1-\epsilon_0}$ we have 
\vspace{0.4in} 
\begin{eqnarray*}
\left\|\sup_{R>0}\left(\int_0^{\sqrt{2^{-j-1-\epsilon_0}}}|{S}_{j,\gamma+1}^{R,t}f(\cdot)t^{2\delta+1}|^2 dt\right)^{1/2}\right\|_{L^p(\R^n)}&\lesssim&  \left(\int_0^{\sqrt{2^{-j-1-\epsilon_0}}}\|\sup_{R>0}|{S}_{j,\gamma+1}^{R,t}f|\|_{L^p(\R^n)}^2t^{4\delta +2}dt\right)^{1/2}\\
&\lesssim & 2^{-j(\delta+1/2)}2^{-j\gamma}2^{-j/4}2^{j\alpha(p)}\\&&\times\sum_{k\geq 0}\frac{\Gamma(\rho+k)}{\Gamma(\rho)k!}2^{N+k} k^{N}2^{-k-\epsilon_0 k} \|f\|_{L^p(\R^n)}.
\end{eqnarray*}
By the asymptotic formula of Gamma function we know that $\frac{\Gamma(\rho+k)}{\Gamma(\rho)k!}\approx k^{\rho}$
and hence the series in the expression above converges. This gives us the desired estimate. 
 
Note that when $\gamma\geq 0,$ we prove the desired estimates for $\gamma$ to be non-negative integers.
For $\gamma=0,$	we know that 
$$f\rightarrow \sup_{R>0}|{S}_{j,\gamma+1}^{R,t}f|$$
is bounded on $L^{p}(\R^n)$ with the operator norm bounded by a constant multiple of  $2^{j\alpha(p)}$ for $p=2$ or $p\geq \mathfrak p_n$ (see \cite{Lee} for detail). Therefore, we get that  $$\left\|\sup_{R>0}\left(\int_0^{\sqrt{2^{-j-1-\epsilon_0}}}|{S}_{j,1}^{R,t}f(x)t^{2\delta+1}|^2 dt\right)^{1/2}\right\|_{L^p(\R^n)}\lesssim 2^{-j(\delta+1/2)}2^{-j/4}2^{j\alpha(p)}\|f\|_{L^p(\R^n)}.$$
Note that the term $2^{-j/4}$ occurs because of integration in the variable $t$.

Next, when $\gamma=1$ we write  \begin{align}\label{gamma}\psi\left(2^j\left(1-\frac{|\xi|^2}{R^2}\right)\right)\left(1-\frac{|\xi|^2}{R^2}-t^2\right)=2^{-j}\left(2^{j}\left(1-\frac{|\xi|^2}{R^2}\right)\psi\left(2^j\left(1-\frac{|\xi|^2}{R^2}\right)\right)\right)\blue{-} t^2\psi\left(2^j\left(1-\frac{|\xi|^2}{R^2}\right)\right).
\end{align}

Since the function  $\phi\left(1-\frac{|\xi|^2}{R^2}\right):=2^{j}\left(1-\frac{|\xi|^2}{R^2}\right)\psi\left(2^j\left(1-\frac{|\xi|^2}{R^2}\right)\right)$ behaves like 
$\psi\left(2^j\left(1-\frac{|\xi|^2}{R^2}\right)\right)$, the identity as above gives us that 
$$|{S}_{j,2}^{R,t}f(x)|\leq 2^{-j}|B^{\phi}_{2^{-j},R}f(x)|+ t^2|B^{\psi}_{2^{-j},R}f(x)|.$$
Using the identity above  along with the estimate~\eqref{localisedmaximal2} we get that  $$\left\|\sup_{R>0}\left(\int_0^{\sqrt{2^{-j-1-\epsilon_0}}}|{S}_{j,2}^{R,t}f(x)t^{2\delta+1}|^2 dt\right)^{1/2}\right\|_{L^p(\R^n)}\lesssim 2^{-j(\delta+1/2)}2^{-j}2^{-j/4}2^{j\alpha(p_1)}\|f\|_{L^p(\R^n)}.$$
 When $0<\gamma<1$, we write $\gamma=\zeta+1$ for $\zeta\in (-1,0)$.
 
 We can write  $$\psi\left(2^j\left(1-\frac{|\xi|^2}{R^2}\right)\right)\left(1-\frac{|\xi|^2}{R^2}-t^2\right)^{\gamma}=2^{-j}\Psi_1\left(\frac{|\xi|^2}{R^2}\right)\left(1-\frac{|\xi|^2}{R^2}-t^2\right)^{\zeta}-t^2 \Psi_2\left(\frac{|\xi|^2}{R^2}\right)\left(1-\frac{|\xi|^2}{R^2}-t^2\right)^{\zeta},$$
 
 where $\Psi_1\left(\frac{|\xi|^2}{R^2}\right)=2^j\left(1-\frac{|\xi|^2}{R^2}\right) \psi\left(2^j\left(1-\frac{|\xi|^2}{R^2}\right)\right)$ and $\Psi_2\left(\frac{|\xi|^2}{R^2}\right)=\psi\left(2^j\left(1-\frac{|\xi|^2}{R^2}\right)\right)$
 
 The boundedness of the square function corresponding to term $\Psi_2\left(\frac{|\xi|^2}{R^2}\right)\left(1-\frac{|\xi|^2}{R^2}-t^2\right)^{\zeta}$ is dealt with in a similar way as in the case of $-1<\gamma<0$ as above. 
 Further, note that $\Psi_1\left(\frac{|\xi|^2}{R^2}\right)$ behaves in a similar manner as $\psi\left(2^j\left(1-\frac{|\xi|^2}{R^2}\right)\right)$.
 Therefore the corresponding inequality \eqref{easypart} can be proved as in the case of $\gamma=0$.
  When $\gamma= m$ or $m+1$ for $m\geq 2$, \eqref{easypart} follows in the exact same manner as in the case $\gamma=1$. We write split the multiplier using binomial expansion of $\left(1-\frac{|\xi|^2}{R^2}-t^2\right)^m$ as in \eqref{gamma} and get the desired estimate \eqref{easypart} as done for $\gamma=1$. When $\gamma\in (m,m+1)$, we write $\gamma=\zeta+m+1, \zeta \in(-1,0)$ and follow the same procedure as in the case when $\gamma\in (0,1)$ above.
 
 \subsection*{Case II: When $t\in [\sqrt{2^{-j-1-\epsilon_0}},\sqrt{2^{-j+1}}]$}
Note that in this case $t\approx 2^{-j/2}.$
We rewrite	
$$\psi\left(2^j\left(1-\frac{|\xi|^2}{R^2}\right)\right)\left(1-\frac{|\xi|^2}{R^2}-t^2\right)_+^{\gamma}=(1-t^2)^{\gamma}\psi\left(2^j\left(1-\frac{|\xi|^2}{R^2}\right)\right)\left(1-\frac{|\xi|^2}{R^2(1-t^2)}\right)_+^{\gamma}$$	
Making a change of variable $1-t^2=s^2$ in  $2^{-j(\delta+1/2)}2^{j/4}\left(\int_{\sqrt{2^{-j-1-\epsilon_0}}}^{\sqrt{2^{-j+1}}}|{S}_{j,\gamma+1}^{R,t}f(x)|^2t dt\right)^{1/2},$ we get the following operator 

$${S'}_{j,\gamma+1}^{R,s}f(x)=\int_{\R^n}\psi\left(2^j\left(1-\frac{|\xi|^2}{R^2}\right)\right)\left(1-\frac{|\xi|^2}{s^2R^2}\right)_+^{\gamma}\hat{f}(\xi)e^{2\pi ix.\xi}d\xi,$$ 
where $t^2\in [2^{-j-1-\epsilon_0}, 2^{-j+1}]$ and $1-t^2=s^2$.

Therefore, we need to establish $L^p$ boundedness of the following square function $$\sup_{R>0}\left(\int_{s_1}^{s_2}|{S'}_{j,\gamma+1}^{R,s}f(x)|^2 sds\right)^{1/2},$$ where $s_1=\sqrt{1-2^{-j+1}}$ and $s_2=\sqrt{1-2^{-j-1-\epsilon_0}}.$ 

Note that we can ignore the term $s$ inside the integral as $s\approx 1$.
Let $M'>100$ be a large number. We shall deal with the cases $j\geq M'$ and $2\leq j<M'$ separately. 
\begin{proposition}\label{maxsquare2} For $j\geq M'$ and $0<\epsilon<1$ we have the following estimate 
	$$\left\|\sup_{R>0}\left(\int^{s_2}_{s_1}|{S'}_{j,\gamma+1}^{R,s}f(\cdot)|^2 ds\right)^{1/2}\right\|_{L^p(\R^n)}\lesssim_{\epsilon} 2^{-j\gamma }2^{j(\alpha(p)-1/2)}2^{\epsilon j}\|f\|_{L^p(\R^n)},$$ 
when $p\geq \mathfrak p_n$ or $p=2$ and $\gamma>\alpha(p)-1/2$. 
\end{proposition}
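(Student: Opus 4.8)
The plan is to reduce the estimate for $\sup_{R>0}\bigl(\int_{s_1}^{s_2}|{S'}_{j,\gamma+1}^{R,s}f|^2\,ds\bigr)^{1/2}$ to the localized square function estimate of Lemma~\ref{localisedversion} (equivalently, to the maximal bound \eqref{localisedmaximal2}) together with the localized multiplier technology from \cite{JLV}, with the parameter $\nu\sim 2^{-j}$. The key structural observation is that in the range $t^2\in[2^{-j-1-\epsilon_0},2^{-j+1}]$ we have $s^2=1-t^2\in[1-2^{-j+1},1-2^{-j-1-\epsilon_0}]$, so that $1-s^2\sim 2^{-j}$ and $sR$ is a small perturbation of $R$. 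Writing
$$
{S'}_{j,\gamma+1}^{R,s}f(x)=\int_{\R^n}\psi\!\left(2^j\!\left(1-\frac{|\xi|^2}{R^2}\right)\right)\left(1-\frac{|\xi|^2}{s^2R^2}\right)_+^{\gamma}\hat f(\xi)e^{2\pi ix\cdot\xi}\,d\xi,
$$
the point is that on the support of $\psi(2^j(1-|\xi|^2/R^2))$ we have $|\xi|^2/R^2=1-O(2^{-j})$, and since $1/s^2=1+O(2^{-j})$ the argument $1-|\xi|^2/(s^2R^2)$ equals $(1-|\xi|^2/R^2)-(1/s^2-1)|\xi|^2/R^2$, which is of size $O(2^{-j})$. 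Thus the factor $(1-|\xi|^2/(s^2R^2))_+^\gamma$ contributes a factor comparable to $2^{-j\gamma}$ times a bounded smooth amplitude, and the whole operator should be recognizable as $2^{-j\gamma}$ times an operator of the form $B^{h}_{2^{-j},R}$ for a suitable $h$ in a bounded subset of $C^{N}([1/2,2])$ — but one that depends on the extra parameter $s$.

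The main technical step, and the one I expect to be the principal obstacle, is handling this $s$-dependence uniformly: one must show that the family of amplitudes arising as $s$ ranges over $[s_1,s_2]$ lies in a fixed bounded set of $C^{N+1}$ (after the normalization $2^{j\gamma}$), so that Lemma~\ref{localisedversion}/\eqref{localisedmaximal2} applies with a uniform constant, and then integrate in $s$. Concretely I would make the substitution $u=1-|\xi|^2/R^2$ and expand $(1-|\xi|^2/(s^2R^2))_+^\gamma = (u - (s^{-2}-1)(1-u))_+^\gamma$; since $s^{-2}-1\sim 2^{-j}$ and on the support $u\sim 2^{-j}$, both terms are $O(2^{-j})$, so we can factor out $2^{-j\gamma}$ and write the remaining amplitude as a rapidly convergent series (Taylor/binomial expansion, exactly as in Case I, equation~\eqref{taylorexansion}) in the small quantities, each term of which is of the form $\psi^{k}(2^j u) \cdot (2^{-j}$-scaled coefficient$)$ with $\psi^k(x)=x^{\gamma-k}\psi(x)$ or similar, and with $s$-dependent coefficients that are uniformly bounded for $s\in[s_1,s_2]$. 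Applying \eqref{localisedmaximal2} to each $B^{\psi^k}_{2^{-j},R}$ gives the bound $2^{-j\gamma}2^{j\alpha(p)}2^{\epsilon j}$ on $\sup_R$, with the $2^{\epsilon j}$ absorbing the $\nu^{-\epsilon}=2^{j\epsilon}$ loss and the extra $2^{N+k}k^N$ factors summing against the decaying series coefficients exactly as in Case~I.

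Finally, I would pass from the pointwise maximal bound to the square-function bound. Since $s$ ranges over an interval of length $\sim 2^{-j}$ and the integrand $|{S'}_{j,\gamma+1}^{R,s}f(x)|^2$ is, by the above, bounded by $\bigl(\sup_R |{S'}_{j,\gamma+1}^{R,s}f(x)|\bigr)^2$, Minkowski's integral inequality gives
$$
\left\|\sup_{R>0}\left(\int_{s_1}^{s_2}|{S'}_{j,\gamma+1}^{R,s}f|^2\,ds\right)^{1/2}\right\|_{L^p}
\lesssim \left(\int_{s_1}^{s_2}\left\|\sup_{R>0}|{S'}_{j,\gamma+1}^{R,s}f|\right\|_{L^p}^2\,ds\right)^{1/2}
\lesssim 2^{-j/2}\cdot 2^{-j\gamma}2^{j\alpha(p)}2^{\epsilon j}\|f\|_{L^p},
$$
which is even better than claimed by a factor $2^{-j/2}$; the stated bound $2^{-j\gamma}2^{j(\alpha(p)-1/2)}2^{\epsilon j}$ follows a fortiori (the $2^{-j/2}$ gain reflects the short $s$-interval, and one could alternatively just drop it). The hypothesis $\gamma>\alpha(p)-1/2$ is used only to keep the underlying Bochner–Riesz square function and maximal estimates \eqref{localisedmaximal2}, Lemma~\ref{localisedversion} valid at the relevant index, and enters through the requirement $\beta=\gamma+1>\alpha(p)+1/2$ in Theorem~\ref{maxsquare}. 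The case $2\le j<M'$ is a finite set of bounded operators handled trivially by the $\gamma\ge 0$ (or $-1<\gamma<0$ series) arguments already used in Case~I, and will be dispatched separately.
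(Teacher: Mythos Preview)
Your proposal contains a genuine gap. The crux of your argument is the claim that on the support of $\psi(2^j(1-|\xi|^2/R^2))$ one has $1-|\xi|^2/(s^2R^2)=O(2^{-j})$, so that $(1-|\xi|^2/(s^2R^2))_+^\gamma$ equals $2^{-j\gamma}$ times a bounded smooth amplitude that can be handled by a Taylor/binomial expansion ``exactly as in Case~I''. But this is precisely what distinguishes Case~II from Case~I. Writing $u=1-|\xi|^2/R^2$ and recalling $s^2=1-t^2$, one has $1-|\xi|^2/(s^2R^2)=(u-t^2)/s^2$. In Case~I, $t^2<2^{-j-1-\epsilon_0}$ while $u\in[2^{-j-1},2^{-j+1}]$, so $u-t^2\gtrsim 2^{-j}$ is bounded away from zero; the binomial expansion \eqref{taylorexansion} converges because $t^2/u\le 2^{-\epsilon_0}<1$. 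In Case~II, however, $t^2\in[2^{-j-1-\epsilon_0},2^{-j+1}]$ overlaps the range of $u$, so $u-t^2$ can vanish (or be negative): the singularity of $(\cdot)_+^\gamma$ is live inside the support of $\psi$, the series you propose diverges, and no normalization by $2^{-j\gamma}$ produces a uniformly smooth amplitude. Your bound via \eqref{localisedmaximal2} therefore cannot be applied.

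The paper's proof addresses exactly this difficulty by performing a further dyadic decomposition of the singular factor, $(1-|\xi|^2/s^2)_+^\gamma=\sum_{k\ge 2}2^{-k\gamma}\tilde\psi\bigl(2^k(1-|\xi|^2/s^2)\bigr)+\tilde\psi_0$; support considerations force $k\ge j-2$. Each piece at scale $k$ is then analysed (via a finer partition of unity at scale $2^{-(1+\epsilon)k}$ together with Taylor expansions of $\tilde\psi$ and $\psi$ about the centres of the partition) and reduced to the localized square function estimate of Lemma~\ref{localisedversion}, contributing a factor $2^{-k\gamma}\cdot 2^{(1+\epsilon)(\alpha(p)-1/2)k}$. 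The hypothesis $\gamma>\alpha(p)-\tfrac12$ is exactly what makes the resulting series over $k\ge j-2$ summable, with leading term of order $2^{-j\gamma}2^{j(\alpha(p)-1/2)}2^{\epsilon j}$; it is not, as you suggest, merely a background validity condition. (As a minor aside, your final bound $2^{-j/2}\cdot 2^{-j\gamma}2^{j\alpha(p)}2^{\epsilon j}$ equals, rather than improves upon, the stated bound.)
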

\begin{proposition}\label{last} When $\gamma>\alpha(p)-1/2$ and $p=2$ or $p\geq \mathfrak p_n$ and $0<\epsilon <1$
$$\left\|\sup_{R>0}\left(\int^{s_2}_{s_1}|{S'}_{j,\gamma+1}^{R,s}f(x)|^2 ds\right)^{1/2}\right\|_{L^p(\R^n)}\leq C(M',\gamma,\epsilon)\|f\|_{L^p(\R^n)}$$
for all $2\leq j\leq M'$.
\end{proposition}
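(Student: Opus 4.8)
The plan is to exploit that for $2\le j\le M'$ the localisation width $2^{-j}$ is bounded below by $2^{-M'}$, so that every implicit constant may be allowed to depend on $M'$; in particular it suffices to prove the estimate for each fixed $j$ in this range and then take the maximum over the finitely many such $j$. Fix such a $j$. The multiplier of ${S'}_{j,\gamma+1}^{R,s}$ is $\psi\big(2^j(1-|\xi|^2/R^2)\big)\big(1-|\xi|^2/(s^2R^2)\big)_+^{\gamma}$, and on the support of $\psi\big(2^j(1-|\xi|^2/R^2)\big)$ one has the identity $1-\frac{|\xi|^2}{R^2}=t^2+s^2v$ with $v:=1-\frac{|\xi|^2}{s^2R^2}$. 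Since in Case~II we have $t^2\in[2^{-j-1-\epsilon_0},2^{-j+1}]$ and $s^2=1-t^2\in[\frac12,1)$, the parameter $v$ is confined to $[0,c_j]$ with $c_j\lesssim 2^{-j}$. Taking a standard dyadic partition of unity $\sum_{l\in\Z}\chi(2^lv)=1$ on $(0,\infty)$, with $\chi\in C^\infty_0([1/2,2])$, only the indices $l\ge j-C$ (with $C$ a fixed positive integer depending only on $\chi$) contribute, and one obtains the \emph{exact} identity
$${S'}_{j,\gamma+1}^{R,s}f=\sum_{l\ge j-C}2^{-l\gamma}\,B^{\omega_{j,l}}_{2^{-l},\,sR}f,\qquad \omega_{j,l}(\sigma):=\chi(\sigma)\,\sigma^{\gamma}\,\psi\big(2^jt^2+2^{j-l}s^2\sigma\big).$$
A direct computation gives $\omega_{j,l}\in C^\infty_0([1/2,2])$ with $\|\omega_{j,l}\|_{C^{N+1}}\lesssim_{\gamma,N}1$ uniformly in $l\ge j-C$, in $R>0$ and in $s\in[s_1,s_2]$, using $2^{j-l}s^2\le 2^{C}$ and that $\sigma\mapsto\sigma^{\gamma}$ is smooth on $[1/2,2]$.

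The next step is to remove the supremum over $R$. Minkowski's inequality gives
$$\sup_{R>0}\Big(\int_{s_1}^{s_2}|{S'}_{j,\gamma+1}^{R,s}f|^2\,ds\Big)^{1/2}\le\sum_{l\ge j-C}2^{-l\gamma}\,\sup_{R>0}\Big(\int_{s_1}^{s_2}|B^{\omega_{j,l}}_{2^{-l},sR}f|^2\,ds\Big)^{1/2}.$$
For a fixed $R$ the change of variable $\rho=sR$ rewrites the inner integral as $\frac1R\int_{s_1R}^{s_2R}|B^{\omega_{j,l}}_{2^{-l},\rho}f|^2\,d\rho$, and because $s_1\ge1/\sqrt2$ we have $\rho\approx R$ throughout, whence $\frac1R\lesssim\frac1\rho$ on that range, so
$$\sup_{R>0}\Big(\int_{s_1}^{s_2}|B^{\omega_{j,l}}_{2^{-l},sR}f|^2\,ds\Big)^{1/2}\lesssim\Big(\int_0^\infty|B^{\omega_{j,l}}_{2^{-l},\rho}f|^2\,\frac{d\rho}{\rho}\Big)^{1/2},$$
a bound no longer involving $R$. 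For the finitely many $l$ with $2^{-l}\ge\frac1{16}$ Lemma~\ref{localisedversion} is not available, but there the multiplier of $B^{\omega_{j,l}}_{2^{-l},\rho}$ is a smooth compactly supported symbol at frequency scale $\rho$, so its convolution kernel has $L^1$ norm $\lesssim_{\gamma}1$ uniformly in $\rho$, and the above square function is dominated pointwise by $(s_2-s_1)^{1/2}\mathcal Mf$ with $\mathcal M$ the Hardy--Littlewood maximal operator. For all remaining $l$, with $2^{-l}<\frac1{16}$, Lemma~\ref{localisedversion} applies (with a parameter $\epsilon_*\in(0,\gamma-\alpha(p)+\frac12)$, which is admissible since $\gamma>\alpha(p)-\frac12$) and, together with $\|\omega_{j,l}\|_{C^{N+1}}\lesssim_{\gamma,N}1$, yields the $L^p$ bound $\lesssim_{\gamma,\epsilon_*}2^{-l(\frac12-\alpha(p))}2^{l\epsilon_*}\|f\|_{L^p(\R^n)}$ for $p=2$ or $p\ge\mathfrak p_n$.

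Summing the contributions, taking $L^p$ norms and using $L^p$ boundedness of $\mathcal M$, the total is controlled by
$$\Big(C_{\gamma}+C_{\gamma,\epsilon_*}\sum_{l\ge j-C}2^{-l(\gamma+\frac12-\alpha(p)-\epsilon_*)}\Big)\|f\|_{L^p(\R^n)}.$$
Since $\epsilon_*<\gamma-\alpha(p)+\frac12$ the exponent $\gamma+\frac12-\alpha(p)-\epsilon_*$ is positive, the geometric series converges, and (as the summation index starts from a bounded value) its sum is $\lesssim_{\gamma,\epsilon_*}1$. Taking the maximum over $2\le j\le M'$ gives the claimed estimate, with a constant that may be taken to depend only on $M'$, $\gamma$, $\epsilon$, $n$ and $p$. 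The step I expect to be the main obstacle is the symbol bookkeeping in the first paragraph: checking that each dyadic piece is genuinely of the form $2^{-l\gamma}B^{\omega_{j,l}}_{2^{-l},sR}$ with the $\omega_{j,l}$ uniformly bounded in $C^{N+1}$ as $l\to\infty$ --- i.e.\ that reparametrising the cut-off $\psi\big(2^j(1-|\xi|^2/R^2)\big)$ in terms of $v=1-|\xi|^2/(s^2R^2)$ does not damage the smoothness needed to invoke Lemma~\ref{localisedversion}. This is exactly where the Stein--Weiss identity \eqref{thankstostein} and the scaling behind \eqref{localised2} enter; once it is in hand, the rest of the argument, and in particular the disposal of $\sup_{R>0}$, proceeds just as in the proof of Proposition~\ref{maxsquare2}, with the $j$-dependence absorbed harmlessly into the constants.
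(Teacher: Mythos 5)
Your overall strategy (dyadic decomposition of the factor $\bigl(1-\tfrac{|\xi|^2}{s^2R^2}\bigr)_+^{\gamma}$ at scales $2^{-l}$, $l\geq j-C$, followed by Lemma~\ref{localisedversion}) is a legitimate route, and your bookkeeping of exponents, the restriction $l\geq j-C$, the treatment of the finitely many large scales by kernel bounds, and the convergence of the series under $\gamma>\alpha(p)-\tfrac12$ are all fine. But there is a genuine gap exactly at the point you flag and then wave through: your symbols $\omega_{j,l}(\sigma)=\chi(\sigma)\sigma^{\gamma}\psi\bigl(2^jt^2+2^{j-l}s^2\sigma\bigr)$ depend on $s$ (through $t^2=1-s^2$ and $s^2$), i.e.\ on the very variable over which the square function integrates, and after the substitution $\rho=sR$ they depend jointly on $\rho$ and $R$. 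Lemma~\ref{localisedversion} is a statement about one \emph{fixed} symbol $\psi\in C^{N+1}([1/2,2])$, with the square integration running only over the radius; uniform $C^{N+1}$ bounds on the family $\{\omega_{j,l}^{(s)}\}_s$ do not by themselves give the estimate, because the inequality $\sup_{R}\bigl(\int_{s_1}^{s_2}|B^{\omega_{j,l}^{(s)}}_{2^{-l},sR}f|^2ds\bigr)^{1/2}\lesssim\bigl(\int_0^{\infty}|B^{\omega_{j,l}}_{2^{-l},\rho}f|^2\tfrac{d\rho}{\rho}\bigr)^{1/2}$ has no meaning when there is no single $\omega_{j,l}$ on the right. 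To close this you must first decouple the parameter dependence --- e.g.\ expand $\psi\bigl(2^j(1-s^2)+2^{j-l}s^2\sigma\bigr)$ in the parameter $s^2$ (Fourier series on the compact parameter interval, or Taylor expansion with an explicitly controlled remainder) so that each resulting piece is a unimodular or bounded scalar factor in $s$ times an operator $B^{g}_{2^{-l},sR}$ with a fixed symbol $g$; the remainder terms then need separate kernel estimates. This is precisely the machinery carried out in the proof of Proposition~\ref{maxsquare2} (the operators $U^{\varphi_q,j}_{Rs,R}$, $P^{j,k}_{Rs,R}$ and the remainder bounds via \eqref{remainderagain}), so your "the rest proceeds just as in Proposition~\ref{maxsquare2}" is not a small omission: it is the substance of the proof in your approach.

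It is also worth noting that the paper disposes of Proposition~\ref{last} by a much softer argument that avoids this issue altogether: since the multiplier factors, one writes ${S'}^{R,s}_{j,\gamma+1}f=B^{\psi}_{2^{-j},R}B^{\gamma}_{Rs}f$, uses the crude bound $\sup_{R>0}|B^{\psi}_{2^{-j},R}h|\lesssim 2^{jn}\mathcal{M}h$ (harmless because $2\leq j\leq M'$), telescopes $B^{\gamma}_{Rs}$ into differences $B^{\gamma+k}_{Rs}-B^{\gamma+k-1}_{Rs}$ plus a tame term as in Lemma~\ref{steinlemma}, and then invokes the Fefferman--Stein vector-valued maximal inequality together with the Bochner--Riesz square function bound of Theorem~\ref{square:Lee}. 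If you want a self-contained proof of Proposition~\ref{last} at the level of effort you budgeted, that factorization argument is the one to use; your dyadic route can be made to work, but only after supplying the parameter-decoupling step described above.
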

Notice that Theorem \ref{maxsquare} follows from Propositions~\ref{maxsquare2} and \ref{last} with $\gamma=\beta-1$. Therefore, it remains to prove the Propositions~\ref{maxsquare2} and \ref{last}. 
\subsection*{Proof of Proposition~\ref{maxsquare2}}
We perform suitable decomposition of the multiplier  $$\psi\left(2^j\left(1-\frac{|\xi|^2}{R^2}\right)\right)\left(1-\frac{|\xi|^2}{s^2R^2}\right)_+^{\gamma}$$
and reduce our problem to already known forms of the square functions. 

For convenience, we set $R=1$ in the multiplier above and write \begin{eqnarray}\label{seconddec}
\left(1-\frac{|\xi|^2}{s^2}\right)_+^{\gamma}&=&\sum_{k\geq 2}2^{-k\gamma}\tilde{\psi}\left(2^k\left(1-\frac{|\xi|^2}{s^2}\right)\right) +\tilde{\psi}_0\left(\frac{|\xi|^2}{s^2}\right)
\end{eqnarray}
where $\tilde{\psi}\in C^{\infty}_0([1/2,2])$ and $\tilde{\psi}_0\in C^{\infty}_0([0,3/4])$. 
Using the above decomposition \eqref{seconddec}
we get
\begin{eqnarray}\label{thirddec}
\psi\left(2^j\left(1-\frac{|\xi|^2}{R^2}\right)\right)\left(1-\frac{|\xi|^2}{s^2}\right)_+^{\gamma}&=\sum_{k\geq 2}2^{-k\gamma}\tilde{\psi}\left(2^k\left(1-\frac{|\xi|^2}{s^2}\right)\right)\psi\left(2^j\left(1-\frac{|\xi|^2}{R^2}\right)\right)\\\nonumber+\tilde{\psi}_0\left(\frac{|\xi|^2}{s^2}\right)\psi\left(2^j\left(1-\frac{|\xi|^2}{R^2}\right)\right)\end{eqnarray}
Let $M>0$ be such that $M^{-1}\tilde{\psi}\in C^N([\frac{1}{2},2])$. Note that $\psi\left(2^j\left(1-|\xi|^2\right)\right)\tilde{\psi}\left(2^k\left(1-\frac{|\xi|^2}{s^2}\right)\right)=0$ whenever $s^2(1-2^{-k-1})<1-2^{-j+1},$ i.e. when $2^{k}<2^{j-1}\frac{s^2}{2-2^{j}(1-s^2)}.$ For $s\in[s_1,s_2]$ we have  $\frac{s^2}{2-2^{-1-\epsilon_0}}\leq \frac{s^2}{2-2^{j}(1-s^2)}$. Therefore, if $2^{k}<2^{j-1}\frac{s^2}{2-2^{-1-\epsilon_0}}$, the product of terms corresponding to $j$ and $k$ in \eqref{thirddec}vanishes. 
This tells us that in the decomposition~\eqref{thirddec} we need to consider only those terms for which $k\geq j-2$. 

Next, we decompose the annular region $\{\xi: s^2(1-2^{-k+1})\leq |\xi|^2\leq s^2(1-2^{-k-1})\}$ which is nothing but the support of $\tilde{\psi}\left(2^k\left(1-\frac{|\xi|^2}{s^2}\right)\right)$, into annuli of smaller width such that the function $\psi\left(2^j\left(1-|\xi|^2\right)\right)$, when restricted to each of these annuli, behaves like a constant.
 
Let $\varphi\in C_c^{\infty}([-1,1])$ be such that $\sum_{m\in\Z}\varphi(x-m)=1$ on $\R.$ Let $L= \sup_{0\leq l\leq N, x\in [-1,1]}\left|\frac{d^l\varphi}{dx^l}\right|.$  For $0<\nu<1$ we can rewrite the sum above as $\sum_{m\in\Z}\varphi(\nu^{-1}(x-\nu m))\equiv 1$ on $\R.$ 

We restrict this identity to the interval $[0,1]$, i.e., we consider 
$\sum_{0\leq k\leq [\nu^{-1}]+1}\varphi(\nu^{-1}(x-\nu m))=1,~ x\in[0,1].$
Using suitable translation and dilation of the function we can get the same identity on the interval  $[s^2(1-2^{-k+1}),s^2(1-2^{-k-1})]$, i.e., we have 
 $$\sum_{0\leq m\leq [\nu^{-1}]+1}\varphi(\nu^{-1}(b-a)^{-1}(x-a-(b-a)\nu m))=1,$$
where $a=s^2(1-2^{-k+1})$ and $b=s^2(1-2^{-k-1}).$  

Let $\nu=\frac{2}{3}2^{-k\epsilon}$ for $0<\epsilon<1$ and note that $(b-a)\nu=2^{-(1+\epsilon)k}s^2.$ 

Putting $x= |\xi|^2$ in the sum above and multiplying by $\tilde{\psi}\left(2^k\left(1-\frac{|\xi|^2}{s^2}\right)\right)$ on both sides, we have the following decomposition $$\tilde{\psi}\left(2^k\left(1-\frac{|\xi|^2}{s^2}\right)\right)=\sum_{0\leq m\leq [\nu^{-1}]+1}\varphi\left(2^{(1+\epsilon)k}\left(\frac{|\xi|^2}{s^2}-1+2^{-k+1}-2^{-(1+\epsilon)k} m\right)\right)\tilde{\psi}\left(2^k\left(1-\frac{|\xi|^2}{s^2}\right)\right).$$
Taking the one dimensional Fourier transform of $\tilde{\psi}$ we have   $$\tilde{\psi}\left(2^k\left(1-\frac{|\xi|^2}{R^2s^2}\right)\right)=\int_{\R}\widehat{\tilde{\psi}}(\mu)e^{2\pi i\left(2^k\left(1-\frac{|\xi|^2}{s^2}\right)\right)\mu} d\mu.$$
We rewrite the exponential term in the integral above in the following way 
$$e^{2\pi i\left(2^k\left(1-\frac{|\xi|^2}{s^2}\right)\mu\right)}=e^{2\pi i2^k(2^{-k+1}-2^{-k(1+\epsilon)}m)\mu}e^{-2\pi i2^{-\epsilon k}2^{(1+\epsilon)k}\left(\frac{|\xi|^2}{s^2}-1+2^{-k+1}-2^{-(1+\epsilon)k} m\right)\mu}.$$
Note that $2^k(2^{-k+1}+2^{-k(1+\epsilon)}m)$ is uniformly bounded for all $m\leq [\nu^{-1}]+1$ and $k\geq 2$.
Denote $$\xi_{k,m}^s=\left(\frac{|\xi|^2}{s^2}-1+2^{-k+1}-2^{-(1+\epsilon)k} m\right).$$ 
Using Taylor's series expansion we get 
$$e^{-2\pi i2^{ k}\xi_{k,m}^s\mu}=\sum_{q=0}^{N-1}\frac{(-1)^q}{q!}(2\pi i2^{-\epsilon k})^q2^{q(1+\epsilon)k}\left(\xi_{k,m}^s\right)^q\mu^q + r_N\left(2^{k}\xi_{k,m}^s\mu\right),$$ 
where $r_N$ is the remainder term. 

For $0\leq q\leq N$ the remainder term $r_N$ satisfies \begin{equation}\label{remainderagain}\left|\frac{d^qr_N}{ds^q}(s)\right|\leq s^{N-q}.\end{equation}
Let us write $2^k(2^{-k+1}+2^{-k(1+\epsilon)}m)=d^{\epsilon}_{k,m}$. Using the expansion and notation as above we get that   
\begin{eqnarray*}
	\tilde{\psi}\left(2^k\left(1-\frac{|\xi|^2}{s^2}\right)\right)&=&\sum_{m\leq [\nu^{-1}]+1}\sum_{q=0}^{N-1}\frac{(-1)^q}{q!}(2\pi i2^{-\epsilon k})^q\frac{d^q\tilde{\psi}}{dx^q}(d^{\epsilon}_{k,m}) \varphi_q(2^{(1+\epsilon)k}\xi^s_{k,m})\\&&
	+\sum_{m\leq [\nu^{-1}]+1}\varphi\left(2^{(1+\epsilon)k}\xi_{k,m}^s\right)\int_{\R}\widehat{\tilde{\psi}}(\mu)e^{2\pi i d^{\epsilon}_{k,m}\mu}~r_N\left(2^{k}\xi_{k,m}^s\eta\right)d\mu,
	\end{eqnarray*}
where $\varphi_q(x)=x^q\varphi(x)$. 

Observe that the condition on $\tilde{\psi}$ and the fact that $d^{\epsilon}_{k,m}$ is uniformly bounded in $k$ and $m$ imply that 
$$\sup_{0\leq q\leq N-1}\left|\frac{d^q\tilde{\psi}}{dx^q}(d^{\epsilon}_{k,m})\right|\leq M,$$ where $M$ is independent of $k$ and $m\leq [\nu^{-1}]+1$.

Define
\begin{equation}\label{opertorU}U^{\varphi_q,j}_{Rs,R}(k,m)f(x)=\int_{\R^n}\varphi_q\left(2^{(1+\epsilon)k}\xi_{k,m}^{Rs}\right)\psi\left(2^j\left(1-\frac{|\xi|^2}{R^2}\right)\right)\hat{f}(\xi)e^{2\pi ix.\xi} d\xi,\end{equation}
\begin{equation}\label{operatorP}P_{Rs,R}^{j,k}(\mu,m)f(x)=\int_{\R^n}\varphi\left(2^{(1+\epsilon)k}\xi_{k,m}^{Rs}\right)\psi\left(2^j\left(1-\frac{|\xi|^2}{R^2}\right)\right)r_N\left(2^{k}\xi_{k,m}^{Rs}\mu\right)\hat{f}(\xi)e^{2\pi ix.\xi}d\xi.\end{equation}
The decomposition of the symbol corresponding to the operator $S'^{R,s}_{j,\gamma+1}$ (replace $\xi$ by $\xi/R$) as carried out in the discussion above gives us that 
\begin{eqnarray}\label{microdecomposition}
{S'}_{j,\gamma+1}^{R,s}f(x)&=&\sum_{k\geq j-2}2^{-k\gamma}\sum_{m\leq [\nu^{-1}]+1}\sum_{q=0}^{N-1}\frac{(-1)^q}{q!}(2\pi i2^{-\epsilon k})^q\frac{d^q\tilde{\psi}}{dx^q}(d^{\epsilon}_{k,m})U^{\varphi_q,j}_{Rs,R}(k,m)f(x)\\ 
&+&\nonumber \sum_{k\geq j-2}2^{-k\gamma}\sum_{m\leq [\nu^{-1}]+1}\int_{\R}\widehat{\tilde{\psi}}(\mu)e^{2\pi i d^{\epsilon}_{k,m}\mu}P_{Rs,R}^{j,k}(\mu,m)f(x)d\mu.
\end{eqnarray}
Therefore, in order to prove Proposition~\ref{maxsquare2}
we need to deal with the operators $U^{\varphi_q,j}_{Rs,R}(k,m)$ and $P_{Rs,R}^{j,k}(\mu,m)$. We first consider the operator $U^{\varphi_q,j}_{Rs,R}(k,m)$ with $q=0$ and note that the terms with $1\leq q\leq N-1$ can be dealt with similarly as the function $\varphi_q$ behaves in the same way for all $0\leq q\leq N-1$. Also, $\varphi_q$ is a smooth function supported in $[-1,1]$ and it satisfies $\sup_{0\leq l\leq N, x\in [-1,1]}|\frac{d^l\varphi_q}{dx^l}|\leq L q!$.  

In order to prove the required estimates we need to perform another decomposition for the operator $U^{\varphi_0,j}_{Rs,R}(k,m)$. We decompose it into operators whose multipliers are supported in smaller annular regions of width approximately $2^{-(1+\epsilon)k}$. This process leaves a remainder term which can be dealt with easily.  

As previously, we work with $R=1$ and then replace $\xi$ by $\xi/R$. We perform  a similar decomposition to the function $\psi(2^j(1-|\xi|^2))$ into smooth functions having their supports in the annulus $\{\xi: |\xi|^2\in [a+ s^22^{-(1+\epsilon)k}(m-1),a+ s^22^{-(1+\epsilon)k}(m+1)]\}$, where $a=s^2(1-2^{-k+1}).$ 

Consider $\psi(2^j(1-|\xi|^2))=\int_{\R}\hat{\psi}(\mu)e^{2\pi i(2^j(1-|\xi|^2))\mu }d\mu$
and write 
$$e^{2\pi i(2^j(1-|\xi|^2))\mu }=e^{2\pi i2^j(1-a-s^22^{-(1+\epsilon)k}m)\mu}e^{-2\pi i2^js^2(\frac{|\xi|^2}{s^2} -1+2^{-k+1}-2^{-(1+\epsilon)k}m)\mu}.$$
Next we write $2^js^2\xi^s_{k,m}=2^j2^{-(1+\epsilon)k}s^22^{(1+\epsilon)k}\xi^s_{k,m}$ and use Taylor's series expansion of $e^{-2\pi i2^js^2\xi^s_{k,m}\mu}$  to  get that 
\begin{eqnarray*}
	\varphi\left(2^{(1+\epsilon)k}\xi_{k,m}^s\right)\psi(2^j(1-|\xi|^2))&=&\sum_{l\geq 0}^{N-1}\frac{1}{l!}s^22^{-d(1+\epsilon)l}2^{-j\epsilon l}\varphi_l\left(2^{(1+\epsilon)k}\xi_{k,m}^s\right)\frac{d^l\psi}{dx^l}\left(2^j(1-a-s^22^{-(1+\epsilon)k}m)\right)\\&&+\varphi\left(2^{(1+\epsilon)k}\xi_{k,m}^s\right) \int_{\R}\hat{\psi}(\mu)e^{2\pi i\left(2^j(1-a-s^22^{-(1+\epsilon)k}m)\mu\right)}r_N\left(2^js^2\xi_{k,m}^s\mu\right)d\mu.
	\end{eqnarray*}
Recall that here $k=j+d$ and $\varphi_l(x)=x^l \varphi(x)$.

Consider the following operators  $$V^{\varphi_l}_{Rs}(k,m)f(x)=\int_{\R^n}\varphi_l\left(2^{(1+\epsilon)k}\xi_{k,m}^{Rs}\right)\hat{f}(\xi)e^{2\pi ix.\xi} d\xi,$$
$$Q_{N,R,s}^{k}(\mu)f(x)=\int_{\R^n}\varphi\left(2^{(1+\epsilon)k}\xi_{k,m}^{Rs}\right)r_N\left(2^js^2\xi_{k,m}^{Rs}\mu\right)\hat{f}(\xi)e^{2\pi ix.\xi}d\xi.$$
Finally, replace $\xi$ by $\xi/R$ in the decomposition of $U^{\varphi_0,j}_{Rs,R}(k,m)$ as above to get that \begin{equation}\label{decompositionofU}U^{\varphi_0,j}_{Rs,R}(k,m)f(x)=\sum_{l\geq 0}^{N-1}\frac{1}{l!}s^22^{-d(1+\epsilon)l}2^{-j\epsilon l}\psi^l(2^j(1-a-s^22^{-(1+\epsilon)k}m))V^{\varphi_l}_{Rs}(k,m)f(x) + X_{N,R,s}^kf(x),
\end{equation}
where $$X_{N,R,s}^kf(x)=\int_{\R}\hat{\psi}(\mu)e^{2\pi i2^j(1-a-s^22^{-(1+\epsilon)k}m)\mu}Q_{N,R,s}^{k}(\mu)f(x) d\mu.$$
It is easy to see that 
$$\sup_{0\leq l\leq N-1}\left|\frac{d^l\psi}{dx^l}(2^j(1-a-s^22^{-(1+\epsilon)k}m))\right|\leq D,$$ where $D$ is independent of $j,k$,  $s\in[s_1,s_2]$ and $m\leq [\nu^{-1}]+1$.

Now we have the following estimate for the operator $V^{\varphi_l}_{Rs}(k,m)$. 
\begin{lemma}\label{reduction1} Let $p$ and $\alpha(p)$ be as in Proposition~\ref{maxsquare2}. Then the following estimate holds. 
	$$\left\|\sup_{R>0}\left(\int_{s_1}^{s_2}|V^{\varphi_l}_{Rs}(k,m)f(\cdot)|^2ds\right)^{1/2}\right
	\|_{L^p(\R^n)}\leq C(p)l!2^{(1+\epsilon)(\alpha(p)-1/2)k}\|f\|_{L^p(\R^n)}.$$
\end{lemma}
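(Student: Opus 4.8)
The plan is to reduce the maximal square-function estimate for $V^{\varphi_l}_{Rs}(k,m)$ to the local square-function estimate already at our disposal, namely \eqref{localised1} (equivalently Lemma~\ref{localisedversion}). The first observation is that the multiplier $\varphi_l\!\left(2^{(1+\epsilon)k}\xi_{k,m}^{Rs}\right)$ is supported, in the variable $|\xi|^2$, in an annulus of width comparable to $\nu' := 2^{-(1+\epsilon)k}$ centered at radius-squared $\approx R^2 s^2$; moreover $\varphi_l$ is, up to the harmless factor $l!$, a function in $C^N([-1,1])$, so that (after rescaling $\xi\mapsto R s\,\xi$) the operator $V^{\varphi_l}_{Rs}(k,m)$ is exactly of the form $B^{\phi}_{\nu',\,Rs}$ for a suitable $\phi$ with $C^{-1}\phi\in C^N$, $C\lesssim l!$. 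Thus the localized square function estimate \eqref{localised2} (with $\nu = \nu'$) will produce the factor $\nu'^{(1/2-\alpha(p))} = 2^{-(1+\epsilon)(1/2-\alpha(p))k} = 2^{(1+\epsilon)(\alpha(p)-1/2)k}$, up to an $\epsilon$-loss, which is precisely the bound asserted. The $l!$ is exactly the combinatorial factor accounted for in the remark following \eqref{localisedmaximal2}.

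The steps, in order, are: (i) rescale in frequency by $R s$ so that the cut-off $\psi(2^j(1-|\xi|^2/R^2))$ and the annulus of $\varphi_l$ are expressed against a fixed sphere; since $s\in[s_1,s_2]$ with $s\approx 1$ and $R>0$ is the supremum parameter, this only changes the relevant $t$-parameter and not the structure of the multiplier. (ii) Identify the resulting operator with $B^{\phi}_{\nu',t}$, $t = Rs$, $\nu' = 2^{-(1+\epsilon)k}$, where $\phi$ depends on $\varphi_l$ (and on the translation by $2^{-k+1}-2^{-(1+\epsilon)k}m$, which is an admissible shift of the argument keeping $\phi\in C^N$). (iii) Pass from $\sup_{R>0}$ of the square function in $s$ over $[s_1,s_2]$ to the square function $\bigl(\int_0^\infty |B^\phi_{\nu',t}f|^2\,dt/t\bigr)^{1/2}$: since $s\approx 1$ the weight $ds$ versus $dt/t$ is comparable, and the supremum over $R$ together with the integral over $s$ is dominated by the full square function over $t\in(0,\infty)$ (the maximal function is pointwise bounded by the square function by the fundamental-theorem-of-calculus trick already used to prove \eqref{localisedmaximal}, or more simply one uses that $B^\phi_{\nu',t}$ has an $L^1$-bounded kernel on each dyadic $t$-block after removing the singular behaviour). (iv) Apply Lemma~\ref{localisedversion} with $\nu = 2^{-(1+\epsilon)k}$ and $p = 2$ or $p\ge\mathfrak p_n$, and collect the constants, absorbing the $\nu^{-\epsilon}$ loss into the $2^{\epsilon j}$-type slack already present in Proposition~\ref{maxsquare2}.

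The main obstacle I expect is step (iii): carefully justifying that the $\sup_{R>0}$, which ranges over a continuum of dilations \emph{of the cut-off itself} (not merely over dilations of the function $f$), is controlled by the single fixed-scale square function. The cut-off $\psi(2^j(1-|\xi|^2/R^2))$ restricts $|\xi|$ to an $R$-dependent annulus, and one must check that the family $\{V^{\varphi_l}_{Rs}(k,m)\}_{R>0}$ is dominated by an $R$-independent square function — this is where one uses that, after the frequency rescaling, the $R$-dependence is entirely transferred into the dilation parameter $t=Rs$ of a \emph{fixed} localized multiplier, so that the supremum over $R$ becomes a supremum over $t$, which in turn is $\le \bigl(\int_0^\infty|\cdot|^2\,dt/t\bigr)^{1/2}$ after inserting the standard Sobolev-embedding-in-$t$ / fundamental-theorem argument (cf. the passage to \eqref{localisedmaximal} from Lemma~\ref{localisedversion}, and the technique from \cite{SW}, pages 277--278). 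Everything else — the $C^N$-regularity bookkeeping for $\varphi_l$, the admissibility of the shift in the argument of $\varphi_l$, and the factor $l!$ — is routine and follows the conventions already set up before \eqref{localisedmaximal2}.
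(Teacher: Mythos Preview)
Your approach is essentially the paper's: rewrite the multiplier of $V^{\varphi_l}_{Rs}(k,m)$ as $\varphi_l\bigl(-\varrho^{-1}(1-|\xi|^2/(R^2s^2 c^{\epsilon}_{k,m}))\bigr)$ with $\varrho^{-1}=2^{(1+\epsilon)k}c^{\epsilon}_{k,m}$ and $c^{\epsilon}_{k,m}=1-2^{-k+1}-2^{-(1+\epsilon)k}m\approx 1$, change variables so the $s$-integral becomes a $t$-integral over an interval of the form $[c R, c' R]$, dominate pointwise by the full square function $\bigl(\int_0^\infty |B^{\varphi_l}_{\varrho,t}f|^2\,dt/t\bigr)^{1/2}$, and apply Lemma~\ref{localisedversion}. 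One correction: the cut-off $\psi(2^j(1-|\xi|^2/R^2))$ is \emph{not} present in $V^{\varphi_l}_{Rs}(k,m)$ --- it was already stripped off in the decomposition~\eqref{decompositionofU} --- so your ``main obstacle'' in step~(iii) does not arise, and no fundamental-theorem-of-calculus trick is needed: once the $s$-integral is rewritten as a $t$-integral over a subinterval of $(0,\infty)$ with $dt/t$, the $\sup_{R>0}$ disappears trivially because every such subinterval sits inside $(0,\infty)$.
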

\begin{proof} Recall that $$\varphi_l\left(2^{(1+\epsilon)k}\xi_{k,m}^{Rs}\right)=\varphi_l\left(2^{(1+\epsilon)k}\left(\frac{|\xi|^2}{R^2s^2}-1+2^{-k+1}-2^{-(1+\epsilon)k} m\right)\right).$$
Denote $(1-2^{-k+1}-2^{-(1+\epsilon)k}m)=c_{k,m}^{\epsilon}$. Note that for fixed $\epsilon>0$ and $k\geq 10$, $c_{k,m}^{\epsilon}$ has a uniform lower and upper bound in $k$ and for all $m\leq [\nu^{-1}]+1$.
Since  $2^{(1+\epsilon)k}\xi^s_{k,m}=-2^{(1+\epsilon)k}c_{k,m}^{\epsilon}\left(1-\frac{|\xi|^2}{s^2c_{k,m}^{\epsilon}}\right),$	
We can rewrite $$\varphi_l\left(2^{(1+\epsilon)k}\xi_{k,m}^{Rs}\right)=\varphi_l\left(-\varrho^{-1}\left(1-\frac{|\xi|^2}{R^2s^2c_{k,m}^{\epsilon}}\right)\right),$$
where $\varrho^{-1}=2^{(1+\epsilon)k}c_{k,m}^{\epsilon}$.

After making a change of variable $s\rightarrow R^{-1}(c_{k,m}^{\epsilon})^{-1}s$ in  $\left(\int_{s_1}^{s_2}|V^{\varphi_l}_{Rs}(k,m)f(\cdot)|^2ds\right)^{1/2}$ observe that it can be dominated by $$\sup_{R>0}\left(R^{-1}\int_{Rc^{\epsilon}_{k,m}s_1}^{Rc^{\epsilon}_{k,m}s_2}|B_{\varrho,Rc^{\epsilon}_{k,m}s}^{\varphi_l}f(x)|^2ds\right)^{1/2}.$$
Further, the quantity above can be dominated by the following square function  $$\left(\int_{0}^{\infty}|B_{\varrho,s}^{\varphi_l,}f(x)|^2s^{-1}ds\right)^{1/2}.$$
The boundedness of the square function as above can be deduced from \cite{Leesqr} with the desired bound, see Lemma \ref{localisedversion} for  detail. 
\end{proof}
Next, we have the following estimate for the operator $X^k_{N,Rs}$.
\begin{lemma}\label{reduction2} For $1<p\leq \infty$, the following holds. 
	$$\left\|\sup_{R>0,~s\in[s_1,s_2]}|X^k_{N,R,s}f|\right\|_{L^p(\R^n)}\lesssim 2^{(1+\epsilon)(\alpha(p)-1/2)k}\|f\|_{L^p(\R^n)}.$$
\end{lemma}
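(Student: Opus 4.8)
The plan is to treat $X_{N,R,s}^{k}f$ as an error term: since it comes from a Taylor remainder, its multiplier carries a large power of a small quantity, and this lets us use very crude estimates and still beat the target bound once $N$ is chosen large enough (depending on $\epsilon$ and $n$). First I would move the $\mu$-integral outside the $L^{p}$ norm by Minkowski's inequality; since $\widehat\psi$ is Schwartz, $\int_{\R}|\widehat\psi(\mu)|(1+|\mu|)^{M}\,d\mu<\infty$ for every $M$, so it suffices to bound $\big\|\sup_{R>0,\,s\in[s_1,s_2]}|Q_{N,R,s}^{k}(\mu)f|\big\|_{L^p(\R^n)}$ by a quantity of the form $C_{N,p}(1+|\mu|)^{N}\,2^{-(\epsilon N-C)k}\|f\|_{L^p(\R^n)}$ with $C=C(n)$, and then integrate in $\mu$.

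The heart of the matter is to recognise $Q_{N,R,s}^{k}(\mu)$ as a tiny multiple of a frequency-localised Bochner--Riesz type operator. On the support of $\varphi\big(2^{(1+\epsilon)k}\xi_{k,m}^{Rs}\big)$ the variable $v:=2^{(1+\epsilon)k}\xi_{k,m}^{Rs}$ lies in $[-1,1]$, and the argument of $r_{N}$ equals $\lambda v$ with $\lambda=2^{j-(1+\epsilon)k}s^{2}\mu$; since $k\ge j-2$ and $s\le s_{2}<1$, one has $|\lambda|\le 4\cdot 2^{-\epsilon k}|\mu|$. Combining this with the remainder bound \eqref{remainderagain} (so $|r_{N}^{(q)}(t)|\lesssim|t|^{N-q}$ for $|t|\le1$, $0\le q\le N$) and Leibniz's rule, I would show that $v\mapsto\Theta_{R,s}(v):=\varphi(v)\,r_{N}(\lambda v)$ is supported in $[-1,1]$ and satisfies $\|\Theta_{R,s}\|_{C^{N}}\lesssim_{N}(2^{-\epsilon k}|\mu|)^{N}$, uniformly in $R$, in $s\in[s_1,s_2]$, and in $m\le[\nu^{-1}]+1$. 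Since $v=-2^{(1+\epsilon)k}c_{k,m}^{\epsilon}\big(1-|\xi|^{2}/(R^{2}s^{2}c_{k,m}^{\epsilon})\big)$ with $c_{k,m}^{\epsilon}\approx1$ uniformly (here $k\ge j-2$ is large, as $j\ge M'>100$ in Proposition~\ref{maxsquare2}), the multiplier of $Q_{N,R,s}^{k}(\mu)$ has the form $\Theta_{R,s}\big(-\varrho^{-1}(1-|\xi|^{2}/(R^{2}s^{2}c_{k,m}^{\epsilon}))\big)$ with $\varrho^{-1}=2^{(1+\epsilon)k}c_{k,m}^{\epsilon}$; that is, $(2^{-\epsilon k}|\mu|)^{N}$ times a $C^{N}$-normalised localised Bochner--Riesz symbol at annular scale $\varrho\sim 2^{-(1+\epsilon)k}$, rescaled to the annulus $\{|\xi|\sim Rs\}$.

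From here the estimate is cheap. The convolution kernel of a multiplier $\Theta\big(-\varrho^{-1}(1-|\xi|^{2}/t^{2})\big)$ with $\|\Theta\|_{C^{N}}\le1$ ($N$ large) is dominated by the $t$-dilate of a fixed radial decreasing function of $L^{1}$ norm $\lesssim\varrho^{-A}$ for some $A=A(n)$ (the standard stationary-phase bound for a thin annular multiplier gives $A=(n-1)/2$, but any polynomial bound suffices), and this majorant depends only on the $C^{N}$ norm, not on the specific $\Theta_{R,s}$. Since $\{Rs\sqrt{c_{k,m}^{\epsilon}}:R>0,\ s\in[s_1,s_2]\}\subseteq(0,\infty)$, the usual control of suprema of dilations by the Hardy--Littlewood maximal operator $\mathcal M$ gives
\[
\sup_{R>0,\,s\in[s_1,s_2]}|Q_{N,R,s}^{k}(\mu)f|\ \lesssim_{N}\ (2^{-\epsilon k}|\mu|)^{N}\,\varrho^{-A}\,\mathcal{M}f .
\]
Taking $L^{p}$ norms for $1<p\le\infty$, using boundedness of $\mathcal M$ on $L^{p}$, integrating in $\mu$ against $\widehat\psi$, and recalling $\varrho\sim 2^{-(1+\epsilon)k}$, one obtains $\big\|\sup_{R,s}|X_{N,R,s}^{k}f|\big\|_{L^p(\R^n)}\lesssim_{N,p}2^{-(\epsilon N-(1+\epsilon)A)k}\|f\|_{L^p(\R^n)}$. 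As $\alpha(p)\ge0$ forces $2^{(1+\epsilon)(\alpha(p)-1/2)k}\ge 2^{-k}$, it is enough to take $N$ so large that $\epsilon N-(1+\epsilon)A\ge1$, which finishes the proof.

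The only delicate point is the \emph{uniformity} of every estimate in $R$ (which ranges over all of $(0,\infty)$), in $s\in[s_1,s_2]$, and in $m$: it holds because the multiplier of $Q_{N,R,s}^{k}(\mu)$ is supported where $|\xi|\sim Rs$, so after the natural rescaling the problem is scale-invariant, and because $c_{k,m}^{\epsilon}$ and the auxiliary constants $d_{k,m}^{\epsilon}$ are bounded above and below uniformly in $k$ and in $m\le[\nu^{-1}]+1$. Note that one cannot simply cite \eqref{localisedmaximal2} here, since the localised symbol $\Theta_{R,s}$ still depends on the supremum parameter (through $\lambda$); the Hardy--Littlewood route avoids this and, owing to the surplus decay $(2^{-\epsilon k})^{N}$, covers the full range $1<p\le\infty$ claimed in the lemma with room to spare.
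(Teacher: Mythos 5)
Your proposal is correct and follows essentially the same route as the paper: reduce to a uniform bound for $\sup_{R,s}|Q_{N,R,s}^{k}(\mu)f|$ with polynomial growth in $|\mu|$, exploit the Taylor-remainder smallness to extract the factor $(2^{-\epsilon k}|\mu|)^{N}$ (the paper writes this as $2^{-j\epsilon N}2^{-d(1+\epsilon)N}(1+|\mu|)^{N}$, which is the same thing since $k=j+d$), dominate the convolution kernel of the thin annular multiplier by a uniform radial decreasing majorant via integration by parts, control the supremum over dilations by the Hardy--Littlewood maximal function, and choose $N$ large. The only cosmetic difference is that the paper obtains the kernel bound directly as $(1+|\mu|)^{N}(1+2^{-(1+\epsilon)k}|x|)^{-n-1}$ (so $A=n$) rather than invoking the $C^{N}$-normalised localised symbol formulation, which changes nothing since, as you note, any polynomial loss in $2^{(1+\epsilon)k}$ is absorbed by taking $N$ large.
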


\begin{proof} Recall the definition of $X^k_{N,R,s}$ and observe that it is enough to prove that 
%
\begin{equation}\label{maxinequality}\left\|\sup_{R>0,~s\in[s_1,s_2]}|Q_{N,R,s}^{k}(\mu)f|\right\|_{L^p(\R^n)}\lesssim (1+|\mu|)^{N}2^{-j\epsilon N}2^{-d(1+\epsilon)N}2^{(1+\epsilon)kn}\|f\|_{L^p(\R^n)}
\end{equation} for all $1<p\leq \infty$.

Let $k=j+d$ and 
define 
\begin{equation}\label{multiplierform}
M_{s,\mu}(\xi)=2^{j\epsilon N}2^{d(1+\epsilon)N}\varphi\left(2^{(1+\epsilon)k}\xi_{k,m}^s\right)r_N\left(2^js^2\xi_{k,m}^s\mu\right).
\end{equation}
We also define $M_{R,s,\mu}(\xi)=M_{s,\mu}(\xi/R)$. 
Note that $Q_{N,R,s}^{k}(\mu)f=2^{-j\epsilon N}2^{-d(1+\epsilon)N} f* (\mathcal{F}^{-1}M_{R,s,\mu})$. Here $\mathcal{F}^{-1} f$ denotes the inverse Fourier transform of the function $f$. 

From the estimates on $r_N$ in \eqref{remainderagain}, it is easy to see that $|\frac{\partial^{\beta}}{\partial\xi^{\beta}}M_{s,\mu}(\xi)|\leq c L (1+|\mu|)^N 2^{(1+\epsilon)k|\beta|}$ for all $|\beta|\leq N.$ 
In particular, when $\beta=0$,  we have 
$$|M_{s,\mu}(\xi)|\leq 2^{j\epsilon N}2^{d(1+\epsilon)N} |\varphi\left(2^{(1+\epsilon)k}\xi_{k,m}^s\right)|~|2^js^2\xi_{k,m}^s\mu|^N.$$
We know that $\varphi\left(2^{(1+\epsilon)k}\xi_{k,m}^s\right)$ is non-zero for $|\xi_{k,m}^s|\leq 2^{-(1+\epsilon)k}$ and $\|\varphi\|_{\infty}\leq L$. This gives us the desired estimate for $\frac{\partial^{\beta}}{\partial\xi^{\beta}}M_{s,\mu}(\xi)$ when $\beta=0$.

Note that the function $M_{s,\mu}(\xi)$ is supported in a cube of sidelength at most $2$ for all $s\in [s_1,s_2]$, $m\leq [\nu^{-1}]+1$ and $\mu.$

An integration by parts argument implies that \begin{equation}\label{pointwiseestimate}
	|\mathcal{F}^{-1}M_{s,\mu}(x)|\leq c L (1+|\mu|)^N (1+2^{-(1+\epsilon)k}|x|)^{-n-1}
\end{equation}
holds uniformly in $s$.
The estimate above yields the pointwise estimate 
$$\sup_{R>0,~ s\in[s_1,s_2]}|Q_{N,R,s}^{k}(\mu)f(x)| \lesssim (1+|\mu|)^N2^{-j\epsilon N}2^{-d(1+\epsilon)N}2^{k(1+\epsilon)n}\mathcal{M}f(x).$$
Choosing $N$ large enough we get the desired estimate. 
\end{proof}
Combining the results from Lemmata \ref{reduction1} and \ref{reduction2} and putting them in \eqref{decompositionofU} we get that 
\begin{equation}\label{boundednessofU}
\left\|\sup_{R>0}\left(\int_{s_1}^{s_2}|U^{\varphi_q,j}_{Rs,R}(k,m)f(\cdot)|^2ds\right)^{1/2}\right\|_p\lesssim 2^{(1+\epsilon)(\alpha(p)-1/2)k}\|f\|_p	
\end{equation}
for the same range of $p$ as in Lemma \ref{reduction1}.

Now we deal with the remaining part in \eqref{microdecomposition} which is given by 
$$\sum_{m\leq [\nu^{-1}]+1}\int_{\R}\widehat{\tilde{\psi}}(\mu)e^{2\pi i d^{\epsilon}_{k,m}\mu}P_{Rs,R}^{j,k}(\mu,m)f(x)d\mu.$$ 

Recall from~\eqref{operatorP} that $$P_{Rs,R}^{j,k}(\mu,m)f(x)=\int_{\R^n}\varphi\left(2^{(1+\epsilon)k}\xi_{k,m}^{Rs}\right)\psi\left(2^j\left(1-\frac{|\xi|^2}{R^2}\right)\right)r_N\left(2^{k}\xi_{k,m}^{Rs}\mu\right)\hat{f}(\xi)e^{2\pi ix.\xi}d\xi.$$
Fix $m\leq [\nu^{-1}]+1$ 
and decompose $\psi\left(2^j\left(1-\frac{|\xi|^2}{R^2}\right)\right)$ into a sum of smooth functions supported in an annulus of width of the order $2^{-(1+\epsilon)k}$ and a remainder term. This is similar to the previous case. We get that
\begin{align}\label{decompositionofP}
P_{Rs,R}^{j,k}(\mu,m)f(x)=\sum_{l=0}^{N-1}\frac{1}{l!}s^22^{-d(1+\epsilon)l}2^{-j\epsilon l}\psi_j(k,m,l)D_{s,R}^{l,k}(\mu,m)f(x)\\\nonumber+	\int_{\R}\int_{\R^n}\hat{\psi}(\mu')s_N^R(\mu,\mu')\varphi(2^{(1+\epsilon)k}\xi_{k,m}^{Rs})\hat{f}(\xi)e^{2\pi ix.\xi} e^{2\pi i2^j(1-a-s^22^{-(1+\epsilon)k}m)\mu'}d\mu'd\xi,
\end{align}
where $\psi_j(k,m,l)=\frac{d^l\psi}{dx^l}(2^j(1-a-s^22^{-(1+\epsilon)k}m)),~ s_N^R(\mu,\mu')=r_N(2^k\xi_{k,m}^{Rs}\mu)r_N(2^js^2\xi_{k,m}^{Rs}\mu')$ and 
 \begin{equation}\label{microterms1}D_{s,R}^{l,k}(\mu,m)f(x)=\int_{\R^n}\varphi_l(2^{(1+\epsilon)k}\xi_{k,m}^{Rs})r_N(2^k\xi_{k,m}^{Rs}\mu)e^{2\pi ix.\xi}\hat{f}(\xi)d\xi\end{equation} for $0\leq l\leq N-1$ 

It is not difficult to see that the maximal function (with respect to $R$ ) associated with $\int_{\R}\hat{\psi}(\mu)e^{2\pi i d^{\epsilon}_{k,m}\mu}D_{s,R}^{l,k}(\mu,m)f(x)d\mu$ can be dealt with in a similar manner as we did for the maximal function $\sup_{R>0~ s\in[s_1,s_2]}|X_{N,R,s}^kf|$ in Lemma \ref{reduction2}. Moreover, the bound is independent of $k,m$ and $s$ and grows at most like $l!$ in $l$.

Define $E(\mu,\mu')=e^{2\pi i d^{\epsilon}_{k,m}\mu}e^{2\pi i2^j(1-a-s^22^{-(1+\epsilon)k}m)\mu'}$ and consider  \begin{equation}\label{remainder}I_{N,Rs}^kf(x)=\int_{\R^2}\widehat{\tilde{\psi}}(\mu)\hat{\psi}(\mu')E(\mu,\mu')T_{N}^{Rs}(\mu,\mu')f(x)d\mu d\mu',\end{equation}
where $$T_{N}^{Rs}(\mu,\mu')f(x)=\int_{\R^n}s_N^R(\mu,\mu')\varphi(2^{(1+\epsilon)k}\xi_{k,m}^{Rs})e^{2\pi ix.\xi}\hat{f}(\xi)d\xi.$$
For $1<p\leq \infty$, we have the following estimate for $I_{N,Rs}^kf$

\begin{eqnarray}\label{reduction3} \left\|\sup_{R>0,~s\in[s_1,s_2]}|I_{N,Rs}^kf|\right\|_{L^p(\R^n)}\lesssim 2^{-j\epsilon N}2^{-d(1+\epsilon)N}2^{-k(1+\epsilon)N}2^{(1+\epsilon)kn}\|f\|_{L^p(\R^n)}
\end{eqnarray}
Now from \eqref{remainder} we know that it is enough to show that
$$\left\|\sup_{R>0,~s\in[s_1,s_2]}|T_{N}^{Rs}(\mu,\mu')f|\right\|_{L^p(\R^n)}\leq (1+|\mu_1|+|\mu_2|)^{N}2^{-j\epsilon N}2^{-d(1+\epsilon)N}2^{-k(1+\epsilon)N}2^{(1+\epsilon)kn}\|f\|_{L^p(\R^n)}.$$ Define $$M_{\mu,\mu'}^{R,s}(\xi)=2^{j\epsilon N}2^{d(1+\epsilon)N}2^{k(1+\epsilon)N}s_N^R(\mu,\mu')\varphi(2^{(1+\epsilon)k}\xi_{k,m}^{Rs}) .$$
It can be verified that $M_{\mu,\mu'}^{R,s}(\xi)=M_{\mu,\mu'}^{1,s}(\xi/R)$.
Further, the inverse Fourier transform of $M_{\mu,\mu'}^{1,s}(\xi)$ satisfies estimate similar to the one in \eqref{pointwiseestimate}.
Therefore, the boundedness of the corresponding maximal function follows in a similar fashion as in Lemma \ref{reduction2}.

 By the estimate \eqref{reduction3} and the explanation provided as earlier we get the desired estimate for the operator $$\int_{\R}\widehat{\tilde{\psi}}(\mu)e^{2\pi i d^{\epsilon}_{k,m}\mu}P_{Rs,R}^{j,k}(\mu,m)f(x)d\mu.$$
 Combining all the results together in \eqref{microdecomposition} we complete the proof of Proposition~\ref{maxsquare2}.
 \qed

\subsection*{Proof of Proposition~\ref{last}}
Note that \begin{eqnarray*}
	{S'}_{j,\gamma+1}^{R,s}f(x)&=&\int_{\R^n}\psi\left(2^j\left(1-\frac{|\xi|^2}{R^2}\right)\right)\left(1-\frac{|\xi|^2}{R^2s^2}\right)_+^{\gamma}\hat{f}(\xi)e^{2\pi ix.\xi}d\xi\\
	&=&B^{\psi}_{2^{-j},R}B_{Rs}^{\gamma}f(x),
	\end{eqnarray*}
where  $$B^{\psi}_{2^{-j},R}(f)(x)=\int_{\R^n}\psi\left(2^j\left(1-\frac{|\xi|^2}{R^2}\right)\right)\hat{f}(\xi)e^{2\pi ix.\xi} d\xi.$$
As done in Lemma \ref{steinlemma} we have that 
\begin{align*}\left(\sup_{R>0}\int_{s_1}^{s_2}|B^{\psi}_{2^{-j},R} B_{Rs}^{\gamma}f(x)|^2 ds\right)^{1/2}\leq \sum_{k=1}^{n_0}\left(\sup_{R>0}\int_{s_1}^{s_2}|B^{\psi}_{2^{-j},R} \left(B_{Rs}^{\gamma+k}-B_{Rs}^{\gamma+k-1}\right)f(x)|^2 ds\right)^{1/2}\\ +\sup_{R>0}\left(\int_{s_1}^{s_2}|B^{\psi}_{2^{-j},R} B_{Rs}^{\gamma+n_0}f(x)|^2 ds\right)^{1/2},	
 \end{align*}
where $n_0$ is such that $\gamma+n_0>\frac{n-1}{2}.$ 

For a fixed $1\leq k\leq n_0$ let us first consider  $$\left(\sup_{R>0}\int_{s_1}^{s_2}|B^{\psi}_{2^{-j},R} \left(B_{Rs}^{\gamma+k}-B_{Rs}^{\gamma+k-1}\right)f(x)|^2 ds\right)^{1/2}$$ 
 After applying the change of variable $s\rightarrow Rs$ the quantity above can be dominated by the following square function  $$\left(\sup_{R>0}\int_{0}^{\infty}|B^{\psi}_{2^{-j},R} \left(B_{s}^{\gamma+k}-B_{s}^{\gamma+k-1}\right)f(x)|^2 s^{-1}ds\right)^{1/2}.$$
 Since $\psi\left(2^j\left(1-\frac{|\xi|^2}{R^2}\right)\right)$ is a compactly supported smooth function. An integration by parts argument tells us that the $L^1$ norm of it's inverse Fourier transform is bounded by $2^{jn}$. Therefore, we get the pointwise estimate. 
 $\sup_{R>0}|B^{\psi}_{2^{-j},R}f(x)|\leq 2^{jn}\mathcal{M}f(x)$ and consequently we have that 
 $$\left(\sup_{R>0}\int_{0}^{\infty}|B^{\psi}_{2^{-j},R} \left(B_{s}^{\gamma+k}-B_{s}^{\gamma+k-1}\right)f(x)|^2 s^{-1}ds\right)^{1/2}\leq 2^{jn}\left(\int_{0}^{\infty}|\mathcal{M} \left(B_{s}^{\gamma+k}-B_{s}^{\gamma+k-1}\right)f(x)|^2 s^{-1}ds\right)^{1/2}.$$
 Invoking the vector valued boundedness of the Hardy-Littlewood Maximal function, see \cite{DK} for detail, the right hand side quantity in the inequality above is bounded on $L^p(\R^n,\int_0^{\infty}f_s(\cdot)s^{-1}ds)$ for all $1<p\leq \infty.$ This yields  \begin{eqnarray*}&& \left\|\left(\sup_{R>0}\int_{0}^{\infty}|B^{\psi}_{2^{-j},R}\left(B_{s}^{\gamma+k}-B_{s}^{\gamma+k-1}\right)f(x)|^2 s^{-1}ds\right)^{1/2}\right\|_p \\&\leq &2^{jn}\left\|\left(\int_{0}^{\infty}| \left(B_{s}^{\gamma+k}-B_{s}^{\gamma+k-1}\right)f(x)|^2 s^{-1}ds\right)^{1/2}\right\|_p.
 	\end{eqnarray*}
 	We can also deduce the above vector valued inequality for the Hardy Lttlewood Maximal function from the result of Feffereman and Stein \cite {FS} as here we are dealing with a separable Hilbert space valued functions and any separable Hilbert space is isomorphic to $l^2(\N)$. 
 The quantity on the right side in the inequality above is nothing but the square function for the Bochner-Riesz means, which it is bounded on $L^p(\R^n)$ for $\gamma>\alpha(p)-1/2$ and $p\geq \mathfrak p_n$ or $p=2.$
 
 Next, we consider the maximal function
 $$\sup_{R>0}\left(\int_{s_1}^{s_2}|B^{\psi}_{2^{-j},R} B_{Rs}^{\gamma+n_0}f(x)|^2 ds\right)^{1/2}.$$
 Again a change of variable $s\rightarrow sR$ in the above gives $$\sup_{R>0}\left(R^{-1}\int_{Rs_1}^{Rs_2}|B^{\psi}_{2^{-j},R} B_{s}^{\gamma+n_0}f(x)|^2 ds\right)^{1/2}.$$
 Since $B^{\psi}_{2^j,R}f(x)\leq 2^{jn}\mathcal{M}f(x)$ a.e. we get that  $$|B_s^{\gamma+n_0}B^{\psi}_{2^{-j},R}f(x)|\leq 2^{jn} \int_{\R^n}|K_s(x-y)|\mathcal{M}f(y)dy,$$ where $K_s$ is the kernel associated with the operator $B_s^{\gamma+n_0}$ and hence the $L^p$ boundedness of $\sup_{s>0,R>0}|B_s^{\gamma+n_0}B^{\psi}_{2^{-j},R}f|$ follows for $1<p\leq \infty$ as $\gamma+n_0$ is large enough to guarantee that $B_s^{\gamma+n_0}$ is dominated by the Hardy-Littlewood maximal function in a pointwise a.e. sense. 
\qed

\subsection{Boundedness of the maximal function  $\mathcal{B}^{\alpha}_{0,*}(f,g)$}\label{secondmaximal}
In this subsection we prove the boundedness of the maximal function  $\mathcal{B}^{\alpha}_{0,*}(f,g).$ In particular, we show that 
for $n\geq 2$ and $\alpha>\alpha_*(p_1,p_2)$, the following holds \begin{eqnarray}\label{secondmax}
	\|\mathcal{B}^{\alpha}_{0,*}(f,g)|\|_{L^p(\R^n)}\lesssim \|f\|_{L^{p_1}(\R^n)}\|g\|_{L^{p_2}(\R^n)},
\end{eqnarray}
where $p_i=2$ or $p_i\geq \mathfrak p_n$ for $i=1,2$ and $\frac{1}{p}=\frac{1}{p_1}+\frac{1}{p_2}$.

We decompose the multiplier $m_{0,R}(\xi,\eta)$ using the partition of identity in the $\eta$ variable. This is same as we did earlier in the $\xi$ variable.
We get that \begin{align*} m_{0,R}(\xi,\eta)=\sum_{j\geq 2}\psi_0\left(\frac{|\xi|^2}{R^2}\right)\psi\left(2^j\left(1-\frac{|\eta|^2}{R^2}\right)\right)\left(1-\frac{|\eta|^2}{R^2}\right)_+^{\alpha}\left(1-\frac{|\xi|^2}{R^2}\left(1-\frac{|\eta|^2}{R^2}\right)^{-1}\right)^{\alpha}_+\nonumber \\+\psi_0\left(\frac{|\xi|^2}{R^2}\right)\psi_0\left(\frac{|\eta|^2}{R^2}\right)\left(1-\frac{|\xi|^2+|\eta|^2}{R^2}\right)_+^{\alpha}.\end{align*}
We further split $\psi_0(x)=\psi_0^1(x)+\psi_0^2(x)$ such that support of $\psi_0^1$ is contained in the interval $[0,3/16]$ and support of $\psi_0^2$ is contained in the interval $[\frac{3}{32},\frac{3}{4}]$. This gives us the follwoing decomposition of $m_{0,R}$ 
\begin{align}\label{flippeddecomposition} m_{0,R}(\xi,\eta)=\sum_{j\geq 2}\psi_0\left(\frac{|\xi|^2}{R^2}\right)\psi\left(2^j\left(1-\frac{|\eta|^2}{R^2}\right)\right)\left(1-\frac{|\eta|^2}{R^2}\right)_+^{\alpha}\left(1-\frac{|\xi|^2}{R^2}\left(1-\frac{|\eta|^2}{R^2}\right)^{-1}\right)^{\alpha}_+\nonumber \\+\psi_0\left(\frac{|\xi|^2}{R^2}\right)\psi_0^1\left(\frac{|\eta|^2}{R^2}\right)\left(1-\frac{|\xi|^2+|\eta|^2}{R^2}\right)_+^{\alpha}+\psi_0\left(\frac{|\xi|^2}{R^2}\right)\psi_0^2\left(\frac{|\eta|^2}{R^2}\right)\left(1-\frac{|\xi|^2+|\eta|^2}{R^2}\right)_+^{\alpha}.\end{align}

Note that $\psi_0\left(\frac{|\xi|^2}{R^2}\right)\psi_0^1\left(\frac{|\eta|^2}{R^2}\right)\left(1-\frac{|\xi|^2+|\eta|^2}{R^2}\right)_+^{\alpha}$ is a smooth function supported in a ball of radius $\sqrt{\frac{15}{16}}R$. Therefore, it can be easily proved that the maximal function  \begin{equation}\label{trivial}\sup_{R>0}\left|\int_{\R^{n}\times\R^n}\psi_0\left(\frac{|\xi|^2}{R^2}\right)\psi_0^1\left(\frac{|\eta|^2}{R^2}\right)\left(1-\frac{|\xi|^2+|\eta|^2}{R^2}\right)_+^{\alpha}\hat{f}(\xi)\hat{g}(\eta)e^{2\pi ix.(\xi+\eta)}d\xi d\eta\right| \end{equation} 
is bounded from $L^{p_1}(\R^n)\times L^{p_2}(\R^n)\rightarrow L^p(\R^n)$ for all $1<p_i\leq \infty, i=1,2$.

Next, we deal with the maximal functions corresponding to the remaining terms in the decomposition \eqref{flippeddecomposition}.

Let $K$ be a large number such that for all $j\geq K$, the support of $\left(1-\frac{|\xi|^2}{R^2}\left(1-\frac{|\eta|^2}{R^2}\right)^{-1}\right)^{\alpha}_+$ is contained in $|\xi|\leq \frac{R}{8}$ where $\eta$ belongs to the support of the function $\psi\left(2^j\left(1-\frac{|\eta|^2}{R^2}\right)\right).$
We know that $\psi_0\left(\frac{|\xi|^2}{R^2}\right)\equiv 1$ when $|\xi|\leq \frac{R}{8}.$
Consider the operator  $$\tilde{T}^{\alpha}_{R,K}(f,g)(x)=\int_{\R^n\times\R^n}\tilde{m}_{R,K}^{\alpha}(\xi,\eta)\hat{f}(\xi)\hat{g}(\eta)e^{2\pi ix.(\xi+\eta)}d\xi d\eta,$$
where $$\tilde{m}_{R,K}^{\alpha}(\xi,\eta)=\sum_{j\geq K}\psi_0\left(\frac{|\xi|^2}{R^2}\right)\psi\left(2^j\left(1-\frac{|\eta|^2}{R^2}\right)\right)\left(1-\frac{|\eta|^2}{R^2}\right)_+^{\alpha}\left(1-\frac{|\xi|^2}{R^2}\left(1-\frac{|\eta|^2}{R^2}\right)^{-1}\right)^{\alpha}_+.$$
The boundedness of the maximal function $\sup_{R>0}|\tilde{T}^{\alpha}_{R,K}(f,g)|$ can be obtained in a similar fashion as in the previous section for the maximal function $\sup_{R>0}|\sum_{j\geq M'}T^{\alpha}_{j,R}(f,g)|.$ Therefore, we get that  $\sup_{R>0}|\tilde{T}^{\alpha}_{R,K}(f,g)|$ is bounded from $L^{p_1}(\R^n)\times L^{p_2}(\R^n)\rightarrow L^{p}(\R^n)$ when  $\alpha>\alpha_*(p_1,p_2)$ and $p_i=2$ or $p_i\geq \mathfrak p_n$ for $i=1,2$.

Now it remains to prove the desired estimate for the terms corresponding to $2\leq j<K$. We consider  \begin{align*}m'_{R}(\xi,\eta)=\sum_{j\geq 2}^{K}\psi_0\left(\frac{|\xi|^2}{R^2}\right)\psi\left(2^j\left(1-\frac{|\eta|^2}{R^2}\right)\right)\left(1-\frac{|\xi|^2+|\eta|^2}{R^2}\right)_+^{\alpha}\\ +\psi_0\left(\frac{|\xi|^2}{R^2}\right)\psi_0^2\left(\frac{|\eta|^2}{R^2}\right)\left(1-\frac{|\xi|^2+|\eta|^2}{R^2}\right)_+^{\alpha}.\end{align*}
Let $\tilde{T}^{\alpha}_R(f,g)$ be the bilinear operator corresponding to the multiplier $m'_{R}$. i.e.,  $$\tilde{T}^{\alpha}_R(f,g)(x)=\int_{\R^n}\int_{\R^n}m'_{R}(\xi,\eta)\hat{f}(\xi)\hat{g}(\eta)e^{2\pi ix.(\xi+\eta)}d\xi d\eta.$$
Note that for each fixed $2\leq j<K$ it is enough to deal with the operator associated with the multipiler  $$m\left(\frac{\xi}{R},\frac{\eta}{R}\right)=\psi_0\left(\frac{|\xi|^2}{R^2}\right)\psi\left(2^j\left(1-\frac{|\eta|^2}{R^2}\right)\right)\left(1-\frac{|\eta|^2}{R^2}\right)_+^{\alpha}\left(1-\frac{|\xi|^2}{R^2}\left(1-\frac{|\eta|^2}{R^2}\right)^{-1}\right)^{\alpha}_+.$$
This is precisely the situation in section~\ref{sec:dec} when we decompose the bilinear Bochner-Riesz multiplier. We perform the same decomposition to the multiplier $\left(1-\frac{|\xi|^2}{R^2}\left(1-\frac{|\eta|^2}{R^2}\right)^{-1}\right)^{\alpha}_+$ as we did in \eqref{thankstostein} with the roles of $\xi$ and $\eta$ interchanged. This gives us an expression similar to \eqref{decomope} with the roles of $f$ and $g$ interchanged and an extra multiplier term  $\psi_0\left(\frac{|\xi|^2}{R^2}\right)$ with $\hat f$. Define \begin{equation}\label{reversed}B^{\psi_0}_{1,R}f(x)=\int_{\R^n}\psi_0\left(\frac{|\xi|^2}{R^2}\right)\hat{f}(\xi)e^{2\pi ix.\xi} d\xi.
\end{equation}
Observe that it is enough to prove that the maximal function  $\left(\sup_{R>0}R_j^{-1}\int_0^{R_j}|B^{\psi_0}_{1,R} B_t^{\delta}f(x)|^2 dt\right)^{1/2}$ is bounded on $L^{p_1}(\R^n)$ for $\delta>\alpha(p_1)-1/2$ and $p_1=2$ or $p_1\geq \mathfrak p_n. $ The term corresponding to the function $g$ can be dealt with by using the similar method as we did to prove estimate \eqref{easypart} and Proposition \ref{last}. Again following the same technique as in Lemma \ref{steinlemma}, we get the following estimate 
\begin{eqnarray*}\left(\sup_{R>0}R_j^{-1}\int_0^{R_j}|B^{\psi_0}_{1,R} B_t^{\delta}f(x)|^2 dt\right)^{1/2}&\leq& \sum_{k=1}^{n_0}\left(\sup_{R>0}R_j^{-1}\int_0^{R_j}|B^{\psi_0}_{1,R}\left(B_t^{\delta+k}-B_t^{\delta+k-1}\right)f(x)|^2 dt\right)^{1/2}\\&&+ \sup_{R>0}\left(R_j^{-1}\int_{0}^{R_j}|B^{\psi_0}_{1,R}B_t^{\delta+n_0}f(x)|^2 dt\right)^{1/2},
\end{eqnarray*}
where $n_0$ is such that $\delta+n_0>\frac{n-1}{2}$. 

Moreover, we proceed as in Proposition \ref{last}  to get that the maximal function $$\left(\sup_{R>0}R_j^{-1}\int_0^{R_j}|B^{\psi_0}_{1,R} B_t^{\delta}f(x)|^2 dt\right)^{1/2}$$ is bounded on $L^{p_1}(\R^n)$ for $\delta>\alpha(p_1)-1/2$ and $p_1=2$ or $p_1\geq \mathfrak p_n$. We skip the details here to avoid a repetition. 

Let us now consider the term corresponding to the multiplier  $$\widetilde{M}\left(\frac{\xi}{R},\frac{\eta}{R}\right)=\psi_0\left(\frac{|\xi|^2}{R^2}\right)\psi_0^2\left(\frac{|\eta|^2}{R^2}\right)\left(1-\frac{|\eta|^2}{R^2}\right)_+^{\alpha}\left(1-\frac{|\xi|^2}{R^2}\left(1-\frac{|\eta|^2}{R^2}\right)^{-1}\right)^{\alpha}_+$$
Indeed, using \eqref{thankstostein} with the roles of $\xi$ and $\eta$ interchanged, we get that $$\widetilde{M}\left(\frac{\xi}{R},\frac{\eta}{R}\right)= \psi_0\left(\frac{|\xi|^2}{R^2}\right)\psi_0^2\left(\frac{|\eta|^2}{R^2}\right)R^{-2\alpha}\int_0^{uR}\left(R^2\varphi_R(\eta)-t^2\right)_+^{\beta-1}t^{2\delta+1}\left(1-\frac{|\xi|^2}{t^2}\right)^{\delta}_+dt,$$
where $\varphi_R(\eta)=\left(1-\frac{|\eta|^2}{R^2}\right)_+$ and $u=\sqrt{\frac{29}{32}}$. 

This part is handled using the same strategy as in section~\ref{proof:max}. After applying the  Cauchy Schwartz inequality and making a change of variable $t\rightarrow Rt$ in the integral involving the function $g$ our job is reduced to proving the $L^{p_2}(\R^n)$ boundedness of the operator
$$\sup_{R>0}\int_0^{u}|H_{R,t}^{\beta}g(\cdot)|^2 dt,$$
where $H_{R,t}^{\beta}g(x)=\int_{\R^n}\psi_0^2\left(\frac{|\eta|^2}{R^2}\right)\left(1-t^2-\frac{|\eta|^2}{R^2}\right)_+^{\beta-1}\hat{g}(\eta)e^{2\pi ix.\eta} d\eta$. 

The term with the function $f$ is dealt with in a similar way as in the previous case. Note that $H_{R,t}^{\beta}g$ is similar to $S^{\beta,t}_{j,R}f$ with the term $\psi_0^2\left(\frac{|\eta|^2}{R^2}\right)$ on the Fourier transform side in place of $\psi\left(2^j\left(1-\frac{|\cdot|^2}{R^2}\right)\right)$ in $S^{\beta,t}_{j,R}f$. Since the function $\psi_0^2\left(\frac{|\eta|^2}{R^2}\right)$ is compactly supported and smooth with its support lying away from the origin, we can use arguments similar to the ones used in proving the estimate \eqref{easypart} and Proposition \ref{last} to deduce that the maximal function  $$\sup_{R>0}\left|\int_0^{u}|H_{R,t}^{\beta}g(\cdot)|^2 dt\right|$$
is bounded on $L^{p_2}(\R^n)$ for $\beta>\alpha(p_2)+1/2$ and $p_2=2$ or $p_2\geq \mathfrak p_n$. 

Combining the estimates together we get the desired result for the maximal function $\mathcal{B}^{\alpha}_{0,*}(f,g).$ 
\qed

\section{Proof of Theorem \ref{dim1}: The case of dimension $n=1$}\label{proof:br}

In this section we address the $L^p$ boundedness of the maximal function $\mathcal{B}^{\alpha}_*(f,g)$ in dimension $n=1$.

Recall that in view of the decomposition of the Bochner-Riesz operator given in section~\ref{sec:dec}, it is enough to establish the desired $L^p$ estimates for the square functions in the following inequality.
\begin{eqnarray*}
\|T_{*,R}^{\alpha}(f,g)\|_{L^{p}(\R)}
	&\lesssim & 2^{-\frac{j}{4}}\left\|\sup_{R>0}\left(\int_0^{\sqrt{2^{-j+1}}}|{S}_{j,\beta}^{R,t}f(\cdot)t^{2\delta+1}|^2 dt\right)^{1/2}\right\|_{L^{p_1}(\R)}\\ &&\left\|\left(\sup_{R>0}R_j^{-1}\int_0^{R_j}|B_t^{\delta}g(x)|^2dt\right)^{1/2}\right\|_{L^{p_2}(\R)},
	\end{eqnarray*}
where $\alpha=\beta+\delta$.

Invoking the $L^p$ boundedness of the square function for Bochner Riesz operator in dimension $n=1$ (see \cite{KanSun} Theorem B, page 361) and using the techniques of Lemma \ref{steinlemma}, it is easy to verify that the estimate  \begin{equation}\label{steinsquare}\left\|\left(\sup_{R>0}R_j^{-1}\int_0^{R_j}|B_t^{\delta}g(x)|^2dt\right)^{1/2}\right\|_{L^{p_2}(\R)}\lesssim \|g\|_{L^{p_2}(\R)}\end{equation} holds for $\delta>-1/2$ when $2\leq p_2<\infty$ and for $\delta>\frac{1}{p_2}-1$ when $1<p_2<2$.

Next, we have the following result for the other square function. 
\begin{theorem}\label{dim1theorem}  The inequality \begin{equation}\label{dim1square}
		\left\|\sup_{R>0}\left(\int_0^{\sqrt{2^{-j+1}}}|{S}_{j,\beta}^{R,t}f(\cdot)t^{2\delta+1}|^2 dt\right)^{1/2}\right\|_{L^{p_1}(\R)}\lesssim 2^{-j\tilde{\alpha} +j/4}\|f\|_{L^{p_1}(\R)} \end{equation} holds where $\alpha=\beta+\delta, \tilde{\alpha}=\text{min}\{\alpha,\delta+1/2\}$, $\beta>1/2$ for $2\leq p_2<\infty$ and $\beta>\frac{1}{p_1}$ for $1<p_1<2$.  
\end{theorem}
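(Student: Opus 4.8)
The plan is to reproduce the two-step scheme of the proof of Theorem~\ref{maxsquare} — splitting the range of $t$ into a non-singular part $t\in[0,2^{-(j+1+\epsilon_0)/2}]$ and a singular part $t\approx 2^{-j/2}$ — but to exploit two features special to dimension one that make the microlocal decomposition of Proposition~\ref{maxsquare2} unnecessary. First, since the frequency support of $\psi\big(2^{j}(1-|\xi|^2/R^2)\big)$ is now a union of two intervals of length $\approx R2^{-j}$, the kernel of $B^{\psi}_{2^{-j},R}$ has an $L^1$-bounded radially decreasing majorant at scale $(R2^{-j})^{-1}$; hence $\sup_{R>0}|B^{\psi}_{2^{-j},R}f|\lesssim \|\psi\|_{C^N}\,\mathcal{M}f$ pointwise, uniformly in $j$. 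Second, the one-dimensional Bochner--Riesz square function $G^{\gamma}$ is bounded on $L^{q}(\R)$ exactly when $\gamma>q^{-1}-1$ for $1<q<2$ and when $\gamma>-1/2$ for $q\geq 2$, by the sharp result of Kanjin and Sunouchi~\cite{KanSun}; writing $\beta=\gamma+1$, the hypotheses say precisely that $\gamma>-1/2$ (when $p_1\geq2$) or $\gamma>p_1^{-1}-1$ (when $1<p_1<2$), so in both cases $\gamma>-1/2$, and consequently $\tilde{\alpha}=\min\{\alpha,\delta+1/2\}=\delta+1/2$.

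For the non-singular part I would expand the now-regular multiplier $\psi\big(2^{j}(1-|\xi|^2/R^2)\big)\big(1-|\xi|^2/R^2-t^2\big)^{\gamma}$ by Taylor's series when $-1<\gamma<0$ (as in \eqref{taylorexansion}--\eqref{s_j}) and by the binomial theorem when $\gamma\geq0$ (as in \eqref{gamma}), reducing matters to the localised maximal operators $\sup_{R>0}|B^{\psi^k}_{2^{-j},R}f|$ with $\|\psi^k\|_{C^N}\lesssim 2^{N+k}k^{N+1}$. By the first remark each of these is $\lesssim \|\psi^k\|_{C^N}\mathcal{M}f$ pointwise, and since $2^{j}t^2\leq 2^{-1-\epsilon_0}$ the series in $k$ converges absolutely, so $\sup_{R>0}|S^{R,t}_{j,\gamma+1}f|\lesssim 2^{-j\gamma}\mathcal{M}f$ uniformly for $t$ in the non-singular range. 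Integrating $t^{4\delta+2}$ over $[0,2^{-(j+1+\epsilon_0)/2}]$ produces the factor $2^{-j(\delta+3/4)}$, so the non-singular part is dominated pointwise by $2^{-j(\gamma+\delta+3/4)}\mathcal{M}f=2^{-j\alpha+j/4}\mathcal{M}f$, whose $L^{p_1}$ norm is $\lesssim 2^{-j\alpha+j/4}\|f\|_{p_1}\leq 2^{-j\tilde{\alpha}+j/4}\|f\|_{p_1}$ since $\alpha\geq\delta+1/2=\tilde{\alpha}$.

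For the singular part I would substitute $1-t^2=s^2$, so that $s$ runs over an interval $[s_1,s_2]$ of length $\approx 2^{-j}$ with $s_i\approx1$; tracking the weight $t^{4\delta+2}$ and the Jacobian (on which $t\approx 2^{-j/2}$, $s\approx1$) turns the square function into $2^{-j(\delta+1/4)}$ times $\sup_{R>0}\big(\int_{s_1}^{s_2}|{S'}^{R,s}_{j,\gamma+1}f|^2\,ds\big)^{1/2}$, where ${S'}^{R,s}_{j,\gamma+1}f=B^{\psi}_{2^{-j},R}B^{\gamma}_{Rs}f$ and $2^{-j(\delta+1/4)}=2^{-j\tilde{\alpha}+j/4}$; thus it suffices to bound this maximal square function on $L^{p_1}(\R)$ with a $j$-independent constant. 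Following the telescoping of Lemma~\ref{steinlemma}, write $B^{\gamma}_{Rs}=\sum_{k=1}^{n_0}\big(B^{\gamma+k-1}_{Rs}-B^{\gamma+k}_{Rs}\big)+B^{\gamma+n_0}_{Rs}$ with $n_0$ chosen so that $\gamma+n_0>0$. For the last term the kernel of $B^{\gamma+n_0}_{Rs}$ is integrable with a radially decreasing majorant, so $\sup_{R,s}|B^{\psi}_{2^{-j},R}B^{\gamma+n_0}_{Rs}f|\lesssim\mathcal{M}(\mathcal{M}f)$ uniformly in $j$, and integrating over the $s$-interval of length $\approx 2^{-j}$ yields the pointwise bound $2^{-j/2}\mathcal{M}(\mathcal{M}f)$. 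For each difference term I would change variables $s\mapsto Rs$, dominate the resulting $\sigma$-integral over $[Rs_1,Rs_2]$ by $\int_0^{\infty}(\cdot)\,d\sigma/\sigma$ (legitimate since $\sigma\approx R$ there), and then apply Minkowski's integral inequality together with $\sup_{R>0}|B^{\psi}_{2^{-j},R}g|\lesssim\mathcal{M}g$ to move the frequency localisation outside, which leaves $\mathcal{M}(G^{\gamma+k-1}f)$. Since $\gamma+k-1\geq\gamma$ lies above the Kanjin--Sunouchi threshold in every case of the theorem, $\|G^{\gamma+k-1}f\|_{p_1}\lesssim\|f\|_{p_1}$, and summing the finitely many $k$ together with the last term closes the estimate; taking $L^{p_1}$ norms only at the very end keeps the argument valid for all $1<p_1<\infty$, including $p_1<2$. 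The step requiring the most care is the Taylor expansion in the non-singular part when $\gamma<0$, where one must check that the growth $\|\psi^k\|_{C^N}\approx 2^{k}k^{N+1}$ is beaten by $(2^{j}t^2)^{k}\leq 2^{-k(1+\epsilon_0)}$; there is no obstacle of the kind met for $n\geq2$, precisely because in one dimension the localised maximal operator is controlled by $\mathcal{M}$ and the square function estimate of \cite{KanSun} is sharp at the very index $\beta>1/p_1$ (equivalently $\gamma>p_1^{-1}-1$) that the theorem requires.
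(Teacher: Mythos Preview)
Your proposal is correct and follows essentially the same approach as the paper: split the $t$-range as in Section~\ref{sec:maxsquare}, use the one-dimensional kernel estimate of Lemma~\ref{dim1kernel} to obtain the $j$-uniform pointwise bound $\sup_{R>0}|B^{\psi}_{2^{-j},R}f|\lesssim\mathcal{M}f$ (which is precisely what replaces the microlocal analysis of Proposition~\ref{maxsquare2}), and then argue as in Proposition~\ref{last} using the sharp one-dimensional square function bound of~\cite{KanSun}. One small imprecision worth fixing: after moving the frequency localisation inside, the object you are left with is not $\mathcal{M}(G^{\gamma+k-1}f)$ but rather $\big(\int_0^\infty|\mathcal{M}((B^{\gamma+k-1}_\sigma-B^{\gamma+k}_\sigma)f)|^2\,\sigma^{-1}d\sigma\big)^{1/2}$, whose $L^{p_1}$-boundedness for all $1<p_1<\infty$ follows from the Fefferman--Stein vector-valued maximal inequality exactly as in the paper's proof of Proposition~\ref{last}.
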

Note that the choice of $\delta$ in \eqref{steinsquare} implies that $\tilde{\alpha}>0$ for all $1<p_2< \infty$. 

The following lemma provides us with an important kernel estimate  in dimension $n=1$. This estimate is used in the proof of Theorem~\ref{dim1theorem}. 

\begin{lemma}\label{dim1kernel} Let $\psi\in C^{\infty}_0([1/2,2])$ and 
	$K_j(x):=\int_{\R}\psi(2^j(1-\xi^2))e^{2\pi ix\xi}d\xi$. Then for all $j\geq 2$ the following estimate holds
	$$|K_j(x)|\lesssim C 2^{-j}(1+|2^{-j}x|)^{-2},$$
	where $C= \sup_{x\in\R,~0\leq k\leq 2}\left|\frac{d^k \psi}{dx^k}\right|$.
\end{lemma}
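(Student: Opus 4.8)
The plan is to estimate $K_j(x)=\int_{\R}\psi(2^j(1-\xi^2))e^{2\pi i x\xi}\,d\xi$ by a change of variables followed by non-stationary phase (integration by parts). First I would record the support information: $\psi(2^j(1-\xi^2))\neq 0$ forces $1-\xi^2\in[2^{-j-1},2^{-j+1}]$, so $\xi^2\in[1-2^{-j+1},1-2^{-j-1}]$, and in particular $|\xi|\sim 1$ on the support (for $j\geq 2$). Thus the integral splits into the piece near $\xi\sim +1$ and the piece near $\xi\sim -1$; by symmetry it suffices to treat the positive one. On this piece I substitute $u=1-\xi^2$, so $\xi=\sqrt{1-u}$, $d\xi=-\tfrac12(1-u)^{-1/2}\,du$, and $u$ ranges over an interval of length $\sim 2^{-j}$ inside $[2^{-j-1},2^{-j+1}]$ where $(1-u)^{-1/2}$ and all its $u$-derivatives are bounded by absolute constants. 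This gives
\[
K_j^{+}(x)=\int_{\R}\psi(2^j u)\,a(u)\,e^{2\pi i x\sqrt{1-u}}\,du,
\]
where $a(u)=\tfrac12(1-u)^{-1/2}$ is smooth with bounded derivatives on the relevant range.

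Next I would get the two competing bounds. The trivial bound, estimating the integrand in absolute value over an interval of length $\sim 2^{-j}$, gives $|K_j^{+}(x)|\lesssim C\,2^{-j}$; this handles the regime $|2^{-j}x|\lesssim 1$. For $|2^{-j}x|\gtrsim 1$ I integrate by parts in $u$ twice. The phase is $\Phi(u)=2\pi x\sqrt{1-u}$, so $\Phi'(u)=-\pi x(1-u)^{-1/2}$, which has absolute value $\sim |x|$ on the support (since $(1-u)^{-1/2}\sim1$), and is nonvanishing there — this is the crucial point that there is no stationary phase after restricting to $|\xi|\sim1$. Writing $e^{i\Phi}=\tfrac{1}{i\Phi'}\tfrac{d}{du}e^{i\Phi}$ and integrating by parts, each application produces a factor $O(1/|x|)$ together with at most two derivatives landing on the amplitude $\psi(2^j u)a(u)$; a $u$-derivative on $\psi(2^j u)$ costs a factor $2^j$, but one also gains the length $2^{-j}$ of the $u$-interval, so after two integrations by parts the net gain is $\sim 2^{-j}|x|^{-2}\cdot(\text{bounded amplitude factors})$, with the amplitude contributing a constant multiple of $C$. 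Hence $|K_j^{+}(x)|\lesssim C\,2^{-j}(2^{-j}|x|)^{-2}$ in this regime. Combining the two regimes yields $|K_j^{+}(x)|\lesssim C\,2^{-j}(1+|2^{-j}x|)^{-2}$, and adding the symmetric contribution near $\xi\sim-1$ finishes the estimate.

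The main obstacle — really the only subtlety — is bookkeeping the derivatives in the integration by parts: one must check that distributing $u$-derivatives among $\psi(2^j u)$, the amplitude $a(u)$, and the reciprocal-phase factors $1/\Phi'$ never produces a power of $2^j$ exceeding the power of $2^{-j}$ supplied by the shrinking domain of integration, and that all the amplitude factors stay bounded by an absolute constant times $C=\sup_{0\le k\le 2}\|\psi^{(k)}\|_\infty$ (which is why two, not more, integrations by parts suffice and why only the first two derivatives of $\psi$ enter). Everything else is a routine localization-plus-non-stationary-phase argument.
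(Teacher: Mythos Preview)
Your argument is correct and follows the same overall scheme as the paper's proof: localize to $|\xi|\sim 1$, change variables, and integrate by parts twice to gain the factor $(2^{-j}|x|)^{-2}$. The only real difference is the choice of substitution. You set $u=1-\xi^2$, which keeps the amplitude in the clean form $\psi(2^j u)a(u)$ on an interval of length $\sim 2^{-j}$ but leaves a nonlinear phase $2\pi x\sqrt{1-u}$; the paper instead sets $\xi\mapsto 2^j(1-\xi)$, which linearizes the phase to $e^{-2\pi i\,2^{-j}x\,\xi}$ at the cost of producing the composed amplitude $\chi(2^{-j}\xi)\psi(\xi(2-2^{-j}\xi))$, for which one must then verify that the support lies in a fixed interval independent of $j$ and that all derivatives are uniformly bounded. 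Your route avoids that support analysis but requires tracking derivatives of $1/\Phi'(u)$; since $(1-u)^{\pm 1/2}$ and its derivatives are uniformly bounded on $u\in[2^{-j-1},2^{-j+1}]$, this is harmless, and your bookkeeping of the powers of $2^j$ versus the domain length $2^{-j}$ is exactly right. Either variant yields the stated estimate with the constant $C=\sup_{0\le k\le 2}\|\psi^{(k)}\|_\infty$.
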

\begin{remark}
We believe that the estimate in the lemma above should be known in the existing literature. However, we could not find it. Therefore, we have decided to present its proof here for the sake of completeness.	
\end{remark}
\begin{proof}
	Let $j$ be large enough so that $2^{-j}<<1/8$.
	Let $\chi$ be an even smooth function supported in $[-1/4,1/4]$ and identically one in $[-1/8,1/8]$.
	We can write $$K_j(x)=\int_{\R}\chi(\xi+1)\psi(2^j(1-\xi^2))e^{2\pi ix\xi}d\xi +\int_{\R}\chi(1-\xi)\psi(2^j(1-\xi^2))e^{2\pi ix\xi}d\xi .$$
	Observe that it is enough to show that $\tilde{K}_j(x)=\int_{\R}\chi(1-\xi)\psi(2^j(1-\xi^2))e^{2\pi ix\xi}d\xi$ satisfies the desired estimate.
	
	We perform change of variable $1-\xi\rightarrow \xi$ to the expression above and then one more time change variable $2^{j}\xi\rightarrow\xi$ to get that  $$\tilde{K}_j(x)=e^{2\pi ix}2^{-j}\int_{\R}\chi(2^{-j}\xi)\psi(\xi(2-2^{-j}\xi))e^{-2\pi i2^{-j}x\xi}d\xi.$$

	Observe that $\psi(\xi(2-2^{-j}\xi))\chi(2^{-j}\xi)$ is a smooth function supported in $[-2^{j-2},2^{j-2}]$. Now we see that the function  $\psi(\xi(2-2^{-j}\xi))\chi(2^{-j}\xi)$ is of compact support with its support lying inside a set independent of $j$. We use an integration by parts argument to prove our claim.

	We already know that support of $\psi$ is $[1/2,2]$.
	Let us look at the function $ u(\xi)=\xi(2-2^{-j}\xi)$.
	It defines a parabola with its zeros at $0$ and $2^{j+1}$. One can show  that $2^j$ is the point where it attains its maximum and between $0$ to $2^j$ it is a strictly increasing function. 
	
	Further note that $u(2)=4(1-2^{-j})>2$ and 
	$u(\frac{1}{4})=\frac{1}{2}(1-2^{-j-3})<1/2$ for all $j\geq 8$. 
	Therefore $\{\xi:u(\xi)\subset [1/2,2]\}\subset [1/4,2]$.
	Hence the support of $\psi(\xi(2-2^{-j}\xi))\chi(2^{-j}\xi)$ is a subset of $[1/4,2]$ for all $j\geq 8$. 
	
	For $y\neq 0$, consider
	\begin{eqnarray*}
		&&\int_{\R}\chi(2^{-j}\xi)\psi(\xi(2-2^{-j}\xi))e^{-2\pi iy\xi}d\xi\\
		&=&-\frac{1}{2\pi iy}\int_{\R}\chi(2^{-j}\xi)\psi(\xi(2-2^{-j}\xi))\left(\frac{d}{d\xi}e^{-2\pi iy\xi}\right)d\xi \\ 
		&=&\frac{1}{2\pi iy}\left(\int_{\R}2^{-j}\chi'(2^{-j}\xi)\psi(\xi(2-2^{-j}\xi))e^{-2\pi iy\xi}d\xi+\int_{\R}\chi(2^{-j}\xi)(2-2^{-j+1}\xi)\psi'(\xi(2-2^{-j}\xi))e^{-2\pi iy\xi} d\xi\right).	
	\end{eqnarray*}
	Notice that the term $$\left|\int_{\R}2^{-j}\chi'(2^{-j}\xi)\psi(\xi(2-2^{-j}\xi))e^{-2\pi iy\xi}d\xi+\int_{\R}\chi(2^{-j}\xi)(2-2^{-j+1}\xi)\psi'(\xi(2-2^{-j}\xi))e^{-2\pi iy\xi} d\xi\right|$$
	is bounded by a uniform constant $C$ with respect to $j$. 
	 
	Similarly, the integration by parts argument applied to each of the integrals above gives us that 
	 $$\left|\int_{\R}\chi(2^{-j}\xi)\psi(\xi(2-2^{-j}\xi))e^{-2\pi iy\xi}d\xi\right|\lesssim C (1+y^2)^{-1}.$$
	This completes the proof.
\end{proof}

\noindent
{\bf Proof of Theorem \ref{dim1theorem}:} We shall skip detail in the proof and point out only the key steps as the proof uses the same strategy as in the  previous sections. 

Recall that $${S}_{j,\beta}^{R,t}f(x)=\int_{\R}\psi\left(2^j\left(1-\frac{\xi^2}{R^2}\right)\right)\left(1-t^2-\frac{\xi^2}{R^2}\right)_+^{\beta-1}\hat{f}(\xi)e^{2\pi ix\xi} d\xi.$$
		It is easy to verify that the kernel estimate from Lemma \ref{dim1kernel} implies that  \begin{equation}\label{100}\|\sup_{R>0}|B^{\psi}_{2^{-j},R}f|\|_{L^{p_1}(\R)}\leq C \|f\|_{L^{p_1}(\R)}
		\end{equation}
		holds for all $j\geq 2$ and $1<p_1\leq \infty$. 
		
		Fix $0<\epsilon_0<<1$ and consider the case
		when $0\leq t^2<2^{-j-1-\epsilon_0}$. In this case we proceed the same way as in Case 1 of Theorem \ref{maxsquare}. Using the estimate \eqref{100} we get that  $$\left\|\sup_{R>0}\left(\int_0^{\sqrt{2^{-j-1-\epsilon_0}}}|{S}_{j,\beta}^{R,t}f(\cdot)t^{2\delta+1}|^2 dt\right)^{1/2}\right\|_{L^{p_1}(\R)}\lesssim 2^{-j\alpha+j/4}\|f\|_{L^{p_1}(\R)}$$
		holds for all $\beta>1/2$.
		
		Next, for $2^{-j-1-\epsilon_0}<t^2\leq 2^{-j+1}$, we use the estimate~\eqref{100} again and follow the idea of Proposition \ref{last} to deduce the following estimate  $$\left\|\sup_{R>0}\left(\int^{\sqrt{2^{-j+1}}}_{\sqrt{2^{-j-1-\epsilon_0}}}\left|{S}_{j,\beta}^{R,t}f(\cdot)t^{2\delta+1}\right|^2 dt\right)^{1/2}\right\|_{L^{p_1}(\R)}\lesssim 2^{-j(\delta+1/2)+j/4}\|f\|_{L^{p_1}(\R)}$$
		for all $\beta>1/2$ if $2\leq p_1$ and $\beta>1/p_1$ if $1<p_1<2$. 
	
	\qed	
\section*{Acknowledgement} The second author acknowledges the financial support by Science and Engineering Research Board (SERB), Government of India, under the grant MATRICS: MTR/2017/000039/Math. 

\end{document}